
\documentclass[11pt]{article}
\usepackage[dvipsnames]{xcolor}

\usepackage{amsthm,amsmath,color,natbib}
\usepackage{latexsym, epsfig, amssymb, amsmath, amsthm, graphicx, mathrsfs, booktabs}

\usepackage{arydshln}

\usepackage{setspace}
\usepackage{graphicx}
\doublespacing

\newtheorem{definition}{Definition}

\newtheorem{prop}{Proposition}
\newtheorem{lem}{Lemma}
\newtheorem{coro}{Corollary}
\newtheorem{rmk}{Remark}

\newtheorem{assu}{Assumption}

\newtheorem{thm}{Theorem}

\makeatother

\newcommand{\bu}{\mbox{\bf u}}

\newcommand{\bw}{\mbox{\bf w}}
\newcommand{\bx}{\mbox{\bf x}}

\newcommand{\bone}{\mbox{\bf 1}}

\newcommand{\bmu}{\mbox{\boldmath $\mu$}}

\newcommand{\bSig}{\mbox{\boldmath $\Sigma$}}

\newcommand{\var}{\mathrm{var}}

\newcommand{\cov}{\mathrm{cov}}

\newcommand{\tr}{\mathrm{tr}}
\newcommand{\diag}{\mathrm{diag}}



\def\independenT#1#2{\mathrel{\setbox0\hbox{$#1#2$}%
\copy0\kern-\wd0\mkern4mu\box0}}

\newcommand{\p}{{\rm I}\kern-0.18em{\rm P}}
\newcommand{\1}{{\rm 1}\kern-0.24em{\rm I}}
\newcommand{\E}{{\rm I}\kern-0.18em{\rm E}}
\newcommand{\non}{\nonumber \\}
\newcommand{\bbA}{{\bf A}}

\newcommand{\bba}{{\bf a}}
\newcommand{\bbB}{{\bf B}}
\newcommand{\bbC}{{\bf C}}

\newcommand{\bbD}{{\bf D}}

\newcommand{\bbe}{{\bf e}}

\newcommand{\bbF}{{\bf F}}

\newcommand{\bbG}{{\bf G}}

\newcommand{\bbH}{{\bf H}}

\newcommand{\bbw}{{\bf w}}
\newcommand{\bbI}{{\bf I}}
\newcommand{\bbi}{{\bf i}}
\newcommand{\bbj}{{\bf j}}

\newcommand{\bbM}{{\bf M}}

\newcommand{\bbU}{{\bf U}}
\newcommand{\bbu}{{\bf u}}
\newcommand{\bbV}{{\bf V}}
\newcommand{\bbv}{{\bf v}}

\newcommand{\bbW}{{\bf W}}

\newcommand{\bbX}{{\bf X}}

\newcommand{\bbx}{{\bf x}}

\newcommand{\bby}{{\bf y}}

\newcommand{\bbb}{{\bf b}}




\newcommand{\ep}{\ensuremath{\epsilon}}





\usepackage{xcolor}
\usepackage[draft,inline,nomargin,index]{fixme}
\fxsetup{theme=color,mode=multiuser}
\FXRegisterAuthor{yf}{ayf}{\color{magenta} YF}

\usepackage{amsthm,amsmath,amsfonts,amssymb,natbib,mathtools,mathrsfs,algorithm,framed,multirow}
\usepackage[noend]{algpseudocode}
\usepackage{ccaption}
\usepackage{longtable}

\usepackage{enumerate}
\usepackage{verbatim}
\usepackage{subfigure}
\usepackage{bbm}
\usepackage{threeparttable}
\usepackage[colorlinks,linkcolor=blue,citecolor=blue,urlcolor=blue]{hyperref}
\usepackage{xr}

\usepackage{JASA_manu}
\setcounter{secnumdepth}{3}

\externaldocument{Jasasup}

\usepackage{authblk}

\makeatother

\begin{document}

%
%

\title{Eigen selection in spectral clustering:  a theory guided practice}

\author[1]{Xiao Han}
\author[2,3]{Xin Tong}
\author[2]{Yingying Fan}
\affil[1]{\footnotesize International Institute of Finance, School of Management, University of Science and Technology of China}
\affil[2]{\footnotesize Department of Data Sciences and Operations, Marshall School of Business, University of Southern California. }
\affil[3]{To whom correspondence should be addressed. xint@marshall.usc.edu}

\date{}

\maketitle

\begin{abstract}
Based on a Gaussian mixture  type model, we derive an eigen selection procedure that improves the usual spectral clustering in high-dimensional settings. Concretely, we derive the asymptotic expansion of the spiked eigenvalues under  eigenvalue multiplicity and eigenvalue ratio concentration results, giving rise to the first theory-backed eigen selection procedure in spectral clustering.  The resulting eigen-selected spectral clustering (ESSC) algorithm enjoys better stability and compares favorably against canonical alternatives. We demonstrate the advantages of ESSC using extensive simulation and multiple real data studies.

{\small \bf KEY WORDS}: clustering, eigen selection, low-rank models, high dimensionality, asymptotic expansions, eigenvectors, eigenvalues.
\end{abstract}

\section{Introduction} \label{Sec1}
Clustering is a widely-used unsupervised learning approach to divide observations into subgroups without the guidance of labels. It is an obvious statistical and machine learning formulation when there are no meaningful labels in the datasets, such as in customer segmentation and criminal cyber-profiling applications. It is also a sensible approach when labels, in theory, do exist, but we have solid reasons to believe that the labels in the datasets are far from accurate. For instance, Medicare-Medicaid fraud detection cannot be formulated as a supervised learning problem, because although the labeled fraudulent transactions are real frauds, people believe that there are a large number of undiscovered frauds in the record.

Over the last sixty years, many clustering approaches have been proposed.
 The most dominant ones include   k-means, hierarchical clustering, spectral clustering, and  various variants  \citep{Hastie.Tibshirani.ea.2009, james2014introduction}. The k-means algorithms  \citep{ bradley1999mathematical, witten2010framework}  adopt a centroid-based clustering approach. Hierarchical clustering algorithms \citep{ ward1963hierarchical} first seek to build a hierarchy of clusters and then make a cut at a hierarchical level.
 Spectral clustering \citep{ ng2002,von2007tutorial} clusters observations using the spectral information of some affinity  matrix derived from the original data for measuring the similarity among observations.

Among the above mentioned main-stream clustering approaches, spectral clustering is particularly well suited for high-dimensional settings, which refers to the situations that the number of features is comparable to or larger than the sample size. High-dimensional settings mainly emerged with modern biotechnologies such as microarray and remain relevant due to the subsequent technological advances such as next-generation sequencing (NGS) technologies. Methodological and theoretical questions in high-dimensional supervised learning (i.e., regression and classification) have been attracting a great deal of attention in the statistics community over the last $20$ years (see the review paper \cite{Zou.2019}  and references within). In contrast, high-dimensional unsupervised problems have had far fewer works so far.  It is a challenging problem mainly because effective dimension reduction is difficult without the assistance of a response variable. Spectral clustering alleviates the problem of curse of dimensionality in high-dimensional clustering by consulting only a few less noisy eigenvectors of an affinity matrix.
For example, suppose that we would like to cluster $n$ observations into $K$ groups, where $K$ is the predetermined cluster number.  Spectral clustering algorithms usually compute the top $K$  eigenvectors of an affinity matrix   and then perform a k-means clustering using just  these $K$ eigenvectors.

The intuition behind  the above spectral clustering method is that under a broad data matrix generative model of low-rank mean matrix plus noise, the data label information is completely captured by the eigenvectors corresponding to top eigenvalues of an affinity matrix based on the low-rank mean matrix. Thus, the  eigenvectors  corresponding to non-spiked eigenvalues can be safely dropped and the purpose of noise reduction is achieved.

In this paper, we formalize the above intuition by  considering  the special case of $K=2$ and Gaussian distributions. Concretely, the data matrix follows the aforementioned structure of low rank (i.e., rank $= 2$) mean matrix plus noise defined as $\bbX=\E\bbX+(\bbX-\E\bbX)$, where $\bbX$ is a $p\times n$ matrix and $n$ is the sample size.   A natural and popular way is to construct the affinity matrix as $\bbX^\top\bbX$ \footnote{A comparison with one alternative affinity matrix construction is given in subsection \ref{sec: comp}}.
We show that the two spiked eigenvectors of $\bbH := (\E\bbX)^\top\E\bbX$, which can be understood as the noiseless version of the affinity matrix, completely capture the label information. We also identify scenarios where exactly one of the two spiked eigenvectors of $\bbH$ is useful for clustering. Here, an eigenvector is useful if its entries take two distinct values, corresponding to the true cluster labels.
Note that the eigenvectors of $\bbH$ are unavailable to us and the spectral clustering is applied to their sample counterparts, that is, the  eigenvectors of the affinity matrix $\bbX^\top\bbX$. These  motivate us to select useful eigenvectors of the affinity matrix in implementing spectral clustering.

Specifically, in this paper, we propose  an innovative \textbf{e}igen \textbf{s}election procedure in the usual \textbf{s}pectral \textbf{c}lustering algorithms and name the resulting algorithm ESSC. Our eigenvector selection step is guided {\color{black}by the theoretical investigation of the top two eigenvectors of $\bbH$.}   We also provide theoretical justification  on our selection criteria.  Our theoretical development does not  require a sparsity assumption on the data generative model, such as those in \cite{cai2013} and \cite{jin2016}. {\color{black}This suggests that our procedure is potentially suitable for a wider range of  applications.}  A by-product of our theoretical development is an asymptotic expansion of the eigenvalues when the population eigenvalues are close to each other (Proposition \ref{t1}). This is a result of stand-alone interest.  We provide extensive simulation studies, and observe that in a vast array of settings, our clustering algorithm ESSC compares favorably in terms of stability and mis-clustering rate against the spectral clustering algorithm without the eigen selection step. These pieces of empirical evidence suggest that ESSC in general, increases the stability of spectral clustering algorithms and achieves competitive clustering results compared with the canonical alternatives. Although our theoretical analysis is conducted under Gaussian distribution assumption, the general idea of eigenvector selection extends to other settings and other high-dimensional clustering problems such as community detection using network data.

We acknowledge that although  the eigen selection idea for spectral clustering is mostly absent in the statistics community, it was practiced in one previous work in the computer science literature. Indeed,
\cite{xiang2008} proposed an EM algorithm to select the eigenvectors of an affinity matrix. But their approach is a heuristic practice and lacks theoretical analysis for the eigenvalues and eigenvectors to support the method.

There is  relatively recent literature on theoretical and methodological developments on high-dimensional clustering. For instance, \cite{ng2002} proposed a symmetric-Laplacian-matrix-based spectral clustering approach and prove the corresponding consistency. \cite{cai2019chime} proposed a clustering procedure based on the EM algorithm for a high-dimensional Gaussian mixture model and proved consistency and minimax optimality for the procedure. \cite{jin2016} proposed a Kolmogorov–Smirnov (KS) score based feature selection approach (IF-PCA) to first reduce the feature dimension before implementing spectral clustering on a centered version of the data. The feature selection idea for clustering was also considered in other works including \cite{yaohall} and \cite{azizyan2013minimax}. None of these aforementioned works select eigenvectors. In this sense, our method and theory complement the existing literature by providing a way to stabilize and improve the performance of existing spectral clustering methods.

The rest of the paper is organized as follows. We introduce the statistical model and key notations in Section \ref{Sec2}. In Section \ref{algo}, we present the main algorithm and detailed rationale that leads to it. Section \ref{Sec2.1} includes the theoretical results. Simulation study and real data analysis are conducted  in Sections \ref{simula} and \ref{sec::real data} respectively, followed by a short discussion.  Technical lemmas, proofs and further discussion are relegated to the {\color{black} Supplementary Material}.

\section{Model setting and notations} \label{Sec2}

In the methodological development and theoretical analysis, we consider the following sampling scheme.  We assume that the data matrix
 $\bbX = (\bx_1, \ldots, \bx_n)$  is generated from

  \begin{equation}\label{0830.1h}
\bx_i=Y_i\bmu_1+(1-Y_i)\bmu_2+\bw_i,\ i=1,\ldots,n\,,
\end{equation}
where $\{\bw_i\}_{i=1}^{n}$ are i.i.d. from $p$-dimensional Gaussian distribution $\mathcal{N}(\bf{0}, \bSig)$, $\bmu_1$, and $\bmu_2$ are two $p$-dimensional non-random vectors, and $Y_1,\ldots, Y_n\in\{0, 1\}$ are deterministic latent class labels.  As such, $Y_i = 1$ means that the $i$th observation $\bx_i$ is from class $1$, and $Y_i = 0$ means that  $\bx_i$ is from class $2$.  The parameters $\bmu_1$, $\bmu_2$ and $\bSig$ are assumed to be unknown. Without loss of generality, we assume that $\bmu_1\neq \bmu_2$ and $\bmu_2\neq 0$.

  The main objective is to recover the latent labels $Y_i$'s from the data matrix $\bbX$.  If $\{Y_i\}_{i=1}^n$ were i.i.d Bernoulli random variables, model \eqref{0830.1h} would be a Gaussian mixture model. Our analysis can extend to that setting but we opt for considering fixed $Y_i$'s to  focus on our attention to the eigen selection principle.

We introduce some notations that will be used throughout the paper. For a matrix $\bbB$, we use $\|\bbB\|$ to denote its spectral norm. For any vector $\bbx$, $\bbx(i)$ represents the $i$-th coordinate of $\bbx$. For any random matrix (or vector) $\bbA$, we use $\E\bbA$ to denote its expectation. We define $c_{11}=\|\bmu_1\|_2^2$, $c_{22}=\|\bmu_2\|_2^2$ and $c_{12}=\bmu_1^{\top}\bmu_2$, where $\|\cdot\|_2$ is the $L_2$ norm of a vector. For any positive sequences $u_n$ and $v_n$, if there exists some positive constant $c$ such that $u_n\ge c v_n$ for all $n\in\mathbb{N}$, then we denote $u_n\gtrsim v_n$.  We denote the $i$-th largest eigenvalue of a square matrix $\bbA$ by $\lambda_i(\bbA)$. Finally, we denote $\sigma_{n}^2=\|\bSig\|^2(n+p)$.

 \section{Algorithm}\label{algo}
In this section, we develop a novel eigen selection procedure that improves the widely used spectral clustering algorithms.
We start our reasoning from the noiseless case.
The entire logic flow of the development process is presented before we introduce the final \textbf{e}igen-\textbf{s}elected \textbf{s}pectral \textbf{c}lustering algorithm (ESSC).

\subsection{Motivation if the signal were known}\label{subsec::preml}

Spectral methods frequently act on the top eigenvectors of the affinity matrix $\bbX^{\top}\bbX$ to recover the underlying latent class labels. As introduced previously, a common practice is to use the top $K=2$ eigenvectors. In this section, we provide some intuition on why the top two eigenvectors contain useful information for clustering.

For notational convenience, denote $\bba_1 = \bby = (Y_1, \ldots, Y_n)^{\top}$  and $\bba_2 = \mathbf{1} - \bby$.      Let $n_1=\|\bba_1\|_2^2$ and $n_2=\|\bba_2\|_2^2$, then $n_1$ and $n_2$ are the numbers of non-zero components of $\bba_1$ and $\bba_2$ respectively, and $n_1+n_2=n$. A noiseless counterpart of $\bbX^{\top}\bbX$ is  $\bbH= (\E\bbX)^{\top} \E \bbX $. By model \eqref{0830.1h},  $\bbH$ can be decomposed by
\begin{equation}\label{0122.1}
 \bbH =\bba_1\bba_1^{\top}c_{11}+\bba_2\bba_2^{\top}c_{22}+\bba_1\bba_2^{\top}c_{12}+\bba_2\bba_1^{\top}c_{12}\ge 0\,.
\end{equation}

Next we discuss the properties of the spectrum of  $\bbH$.
Because
\begin{equation}\label{0416.1}
\text{rank}((\E\bbX)^{\top})\le \text{rank}(\bba_1\bmu_1^{\top})+\text{rank}(\bba_2\bmu_2^{\top})=2\,,
 \end{equation}
 there exist at most two $n$-dimensional orthogonal unit vectors $\bbu_1$ and $\bbu_2$ such that
\begin{equation}\label{0122.1h}
\bbH=d_1^2\bbu_1\bbu_1^{\top}+d_2^2\bbu_2\bbu_2^{\top},\ \text{where} \ \ d_1^2\ge d_2^2\ge 0\,.
\end{equation}
Here, $d_1^2$ and $d_2^2$ are the top two eigenvalues of $\bbH$ and $\bbu_1$ and $\bbu_2$ are the corresponding (population) eigenvectors.  Under our model setting, we have $d_1^2>0$ because otherwise $\bmu_1=\bmu_2 = \bf{0}$, contradicting with the model assumption. For simplicity, in the following, we use $\bbu=(\bbu(1),\ldots,\bbu(n))^{\top}$ to denote either $\bbu_1$ or $\bbu_2$ and $d^2$ to denote its corresponding eigenvalue.
By the definition of eigenvalue,
\begin{equation}\label{0417.1}
\bbH\bbu=d^2\bbu\,.
\end{equation}
Note that $\bbH$ has a block structure by suitable permutation of rows and columns. For example, when $\bba_1=(1,0,1,0)^{\top}$, $\bba_2=(0,1,0,1)^{\top}$, substituting $\bba_1$ and $\bba_2$ into (\ref{0122.1}), we have
$$\bbH=\left(
\begin{array}{cccc}
c_{11} & c_{12} & c_{11} &c_{12}\\
c_{12} & c_{22} & c_{12}&c_{22}\\
c_{11} & c_{12} & c_{11}&c_{12}\\
c_{12} & c_{22} & c_{12}&c_{22}\\
\end{array}
\right)\,.$$
By exchanging the 2nd and 3rd rows and columns of $\bbH$ simultaneously, we can get the following matrix with a clear block structure
 $$\widetilde \bbH=\left(
\begin{array}{cc;{2pt/2pt}cc}
c_{11} & c_{11} & c_{12} &c_{12}\\
c_{11} & c_{11} & c_{12}&c_{12}\\\hdashline[2pt/2pt]
c_{12} & c_{12} & c_{22}&c_{22}\\
c_{12} & c_{12} & c_{22}&c_{22}\\
\end{array}
\right)\,.$$
The eigenvalues of $\bbH$ and $\widetilde \bbH$ are the same and the eigenvectors are the same up to proper permutation of their coordinates.  Inspired by the block structure of $\bbH$ after proper permutation, we can see that (\ref{0122.1}) and (\ref{0417.1}) imply
\begin{equation}\label{0409.1}
c_{11}\sum_{\bba_1(i)=1}\bbu(i)+c_{12}\sum_{\bba_1(i)=0}\bbu(i)=d^2\bbu(j),\ \text{ for }j\text{ such that } \bba_1(j)=1\,,
\end{equation}
\begin{equation}\label{0409.2}
c_{22}\sum_{\bba_1(i)=0}\bbu(i)+c_{12}\sum_{\bba_1(i)=1}\bbu(i)=d^2\bbu(j),\ \text{ for }j\text{ such that } \bba_1(j)=0\,.
\end{equation}
From (\ref{0409.1}) and  (\ref{0409.2}), we conclude that if $d^2>0$, then
\begin{equation}\label{0410.1}
\bba_1(i)=\bba_1(j) \Longrightarrow \bbu(i)=\bbu(j)\,.
\end{equation}
Therefore, the eigenvector $\bbu$ corresponding to a nonzero eigenvalue $d^2>0$ takes at most two distinct values in its components. On the other hand, if $d^2>0$ and $\bbu$  takes two distinct values in its components, then these values have a one-to-one correspondence with the cluster labels. We also notice that when $d^2=0$, $\bbu$ would not be informative for clustering. Given these observations, we introduce the following definition for ease of presentation.

\begin{definition}
	A population eigenvector $\bbu$ is said to have \textit{clustering power} if its corresponding eigenvalue $d^2$ is positive and its coordinates take exactly two distinct values.
\end{definition}

\begin{thm}\label{thm: population-eg}
The top two eigenvalues of $\bbH$ can be expressed as
\begin{equation}\label{0122.7h}
d^2_1=\frac{1}{2}\left(n_1c_{11}+n_2c_{22}+(n_1^2c_{11}^2+n_2^2c_{22}^2+4n_1n_2c_{12}^2-2n_1n_2c_{11}c_{22})^{\frac{1}{2}}\right)\,,
\end{equation}
and
\begin{equation}\label{0122.7ht}
d^2_2=\frac{1}{2}\left(n_1c_{11}+n_2c_{22}-(n_1^2c_{11}^2+n_2^2c_{22}^2+4n_1n_2c_{12}^2-2n_1n_2c_{11}c_{22})^{\frac{1}{2}}\right)\,.
\end{equation}
Moreover, we conclude the following regarding the clustering power of $\bbu_1$ and $\bbu_2$.

\begin{enumerate}[(a)]
\item When $c_{12}^2 = c_{11}c_{22}$, the problem is degenerate with $d_1^2 = n_1c_{11}+n_2c_{22}$ and $d_2^2 = 0$, and only the eigenvector $\bbu_1$ has clustering power.

\item  When $c_{12}^2 \neq c_{11}c_{22}$,  $c_{12}=0$  and $n_1c_{11}=n_2c_{22}$, we face the problem of multiplicity (i.e., $d_1^2 = d_2^2 = n_1c_{11}$) and at least one of $\bbu_1$ and $\bbu_2$ have clustering power.

\item When $c_{12}^2 \neq c_{11}c_{22}$, $c_{12}=0$  and $n_1c_{11}\neq n_2c_{22}$, we have $d_1^2  = \max\{n_1c_{11},n_2c_{22}\}$ and $d_2^2=\min\{n_1c_{11},n_2c_{22}\}>0$, and both $\bbu_1$ and $\bbu_2$ have clustering power.

\item When $c_{12}^2 \neq c_{11}c_{22}$ and $c_{12}\neq 0$, if $n_1c_{11}+n_2c_{12} = n_2c_{22}+n_1c_{12}$, exactly one eigenvector has clustering power, and if $n_1c_{11}+n_2c_{12} \neq  n_2c_{22}+n_1c_{12}$,  both eigenvectors have clustering power.
\end{enumerate}
\end{thm}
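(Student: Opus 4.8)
The plan is to collapse the spectral problem for the $n\times n$ matrix $\bbH$ to that of a $2\times 2$ matrix. Write $\bbA=[\bba_1,\bba_2]$ (an $n\times 2$ matrix, whose columns are nonzero with disjoint supports, hence linearly independent whenever both clusters are nonempty) and $\bbC=\bigl(\begin{smallmatrix}c_{11}&c_{12}\\ c_{12}&c_{22}\end{smallmatrix}\bigr)$, so that \eqref{0122.1} reads $\bbH=\bbA\bbC\bbA^\top$. Since $\bba_1^\top\bba_2=0$, $\|\bba_1\|_2^2=n_1$, $\|\bba_2\|_2^2=n_2$, we have $\bbA^\top\bbA=\diag(n_1,n_2)$, so $\bbH=XY$ with $X=\bbA$, $Y=\bbC\bbA^\top$, while $YX=\bbC\diag(n_1,n_2)=:\bM=\bigl(\begin{smallmatrix}n_1c_{11}&n_2c_{12}\\ n_1c_{12}&n_2c_{22}\end{smallmatrix}\bigr)$. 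Hence $\bbH$ and $\bM$ have the same nonzero eigenvalues with multiplicity, and by \eqref{0416.1} these are the only possibly-nonzero eigenvalues of $\bbH$; moreover $\bbH\bbu=d^2\bbu$ with $d^2\neq0$ forces $\bbu\in\mathrm{range}(\bbH)\subseteq\mathrm{span}(\bba_1,\bba_2)$, so $\bbu=\alpha\bba_1+\beta\bba_2$ with $\bM(\alpha,\beta)^\top=d^2(\alpha,\beta)^\top$, and conversely. The formulas \eqref{0122.7h}--\eqref{0122.7ht} then follow from the quadratic formula applied to $\lambda^2-\mathrm{tr}(\bM)\lambda+\det(\bM)$, using $\mathrm{tr}(\bM)=n_1c_{11}+n_2c_{22}$, $\det(\bM)=n_1n_2(c_{11}c_{22}-c_{12}^2)$, and $\mathrm{tr}(\bM)^2-4\det(\bM)=(n_1c_{11}-n_2c_{22})^2+4n_1n_2c_{12}^2\ge 0$, which simultaneously shows $d_1^2,d_2^2$ are real and, together with Cauchy--Schwarz ($c_{12}^2\le c_{11}c_{22}$), nonnegative with $d_1^2\ge d_2^2$.

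Next I set up the clustering-power dictionary. Because $\bba_1,\bba_2$ partition the coordinates, a vector $\alpha\bba_1+\beta\bba_2$ equals $\alpha$ on class $1$ and $\beta$ on class $2$; so an eigenvector with $d^2>0$ has clustering power iff $\alpha\neq\beta$, i.e., iff its $\bM$-eigenvector is not a multiple of $(1,1)^\top$ (eigenvectors with $d^2=0$ never have clustering power, by definition). Two facts will do the rest: (i) $(1,1)^\top$ is an eigenvector of $\bM$ iff $n_1c_{11}+n_2c_{12}=n_1c_{12}+n_2c_{22}$, in which case its eigenvalue is $n_1c_{11}+n_2c_{12}$ (read off the two coordinates of $\bM(1,1)^\top$); and (ii) since $\bbH$ is symmetric, $\bbu_1\perp\bbu_2$, which via the disjoint-support structure is exactly the weighted relation $n_1\alpha_1\alpha_2+n_2\beta_1\beta_2=0$ for the associated pairs $(\alpha_i,\beta_i)$. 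In particular, two such nonzero pairs cannot both be multiples of $(1,1)^\top$, and the pair weighted-orthogonal to $(1,1)^\top$ must have $\alpha\neq\beta$ (otherwise it is $0$).

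The four cases are then short. In case (a) $\det(\bM)=0$, so $d_2^2=0$ and $d_1^2=n_1c_{11}+n_2c_{22}>0$ (as $\bmu_2\neq0$); $\bbu_2$ has no clustering power, and $\bbu_1$ could fail only if $(1,1)^\top$ were its eigenvector, which by (i) forces $c_{12}=c_{22}$ and, with $c_{12}^2=c_{11}c_{22}$, $c_{11}=c_{22}=c_{12}$ — equality in Cauchy--Schwarz with equal norms, hence $\bmu_1=\bmu_2$, a contradiction. In cases (b) and (c), $c_{12}=0$ makes $\bM=\diag(n_1c_{11},n_2c_{22})$ with both entries positive (the stated hypotheses rule out a vanishing norm): (b) is the multiplicity case $d_1^2=d_2^2=n_1c_{11}>0$ with $\bbu_1,\bbu_2$ an arbitrary orthonormal basis of $\mathrm{span}(\bba_1,\bba_2)$, and by (ii) at most one of them is a multiple of $(1,1)^\top$, so at least one has clustering power; (c) has distinct eigenvalues $\max\{n_1c_{11},n_2c_{22}\}$ and $\min\{n_1c_{11},n_2c_{22}\}>0$ with eigenvectors $(1,0)^\top,(0,1)^\top$, neither a multiple of $(1,1)^\top$, so both $\bbu_1,\bbu_2$ have clustering power. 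In case (d), $\det(\bM)=n_1n_2(c_{11}c_{22}-c_{12}^2)>0$ and $c_{12}\neq0$ give two distinct positive eigenvalues; if $n_1c_{11}+n_2c_{12}=n_2c_{22}+n_1c_{12}$, then by (i) exactly one $\bM$-eigenvector is the $(1,1)^\top$ direction while the other, being weighted-orthogonal to it, has $\alpha\neq\beta$, so exactly one of $\bbu_1,\bbu_2$ has clustering power; otherwise neither $\bM$-eigenvector is a multiple of $(1,1)^\top$, so both do.

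I expect the only genuine friction — the ``hard part'' — to be the degenerate-norm bookkeeping: the model excludes $\bmu_1=\bmu_2$ and $\bmu_2=0$ but \emph{not} $\bmu_1=0$, so in each branch I must verify that the claimed positivity of the relevant eigenvalue truly follows from that branch's hypotheses (this is where the conditions $c_{12}^2\neq c_{11}c_{22}$ and $n_1c_{11}=n_2c_{22}$ are used), and in case (a) the Cauchy--Schwarz equality argument must be pushed all the way to $\bmu_1=\bmu_2$. Everything else is elementary $2\times2$ linear algebra.
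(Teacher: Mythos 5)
Your proposal is correct and lands on exactly the paper's argument: the paper also reduces to the $2\times2$ eigenproblem $\bM(v_1,v_2)^\top=d^2(v_1,v_2)^\top$ with $\bM=\bigl(\begin{smallmatrix}n_1c_{11}&n_2c_{12}\\ n_1c_{12}&n_2c_{22}\end{smallmatrix}\bigr)$, obtaining the same quadratic \eqref{0122.6h}, and runs the same orthogonality/proportional-to-$\bone$ case analysis. The only real difference is packaging — you derive $\bM$ via the factorization $\bbH=\bbA\bbC\bbA^\top$ and $XY$/$YX$ similarity rather than the paper's block-structure computation, and your uniform criterion (i) for ``$(1,1)^\top$ is an $\bM$-eigenvector'' replaces the paper's case-specific substitutions into \eqref{0122.5}--\eqref{0122.6}, but the substance and the $2\times2$ matrix are identical.
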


Theorem \ref{thm: population-eg} implies that under our model described in equation \eqref{0830.1h}, at least one of $\bbu_1$ and $\bbu_2$ have clustering power.  More importantly, this theorem  indicates that even in the noiseless setting (i.e., when $\bbH$ is known), there are cases in which only one eigenvector has clustering power and that this eigenvector could be either $\bbu_1$ or $\bbu_2$.  This suggests the potential importance of eigenvector selection in spectral clustering and we propose Oracle Procedure \ref{or-pro1} below to select a set $\mathcal U$ of important eigenvectors under the noiseless setting.
\begin{algorithm}[h]
	\caption{[Oracle Procedure 1]}
	\begin{algorithmic}[1]
	\State Set $\mathcal U = \emptyset$.
\State  Check whether $\bbu_1$ has two distinct values in its components. If yes,  add $\bbu_1$ to $\mathcal U$ and go to Step 3; If no, add $\bbu_2$ to $\mathcal U$ and go to Step 5.
	\State  Check whether $d_2^2>0$. If no, go to Step 5; If yes, go to Step 4.
	\State  Check whether $\bbu_2$ has two distinct values in its components. If yes, add $\bbu_2$ to $\mathcal U$ and go to Step 5; if no, go to Step 5.
	\State  Return $\mathcal U$.
	\State Use the eigenvector(s) in $\mathcal U$ for clustering.
\end{algorithmic}\label{or-pro1}
\end{algorithm}

Despite its simple form, Oracle Procedure \ref{or-pro1} is  difficult to implement at the sample level. To elaborate, note that in practice we will have to estimate the eigenvalues and eigenvectors $(d_i^2, \bu_i)$, $i=1,2$. Without loss of generality, assume that $d_1\geq d_2\geq0$.  Note that $d_1$ and $d_2$ are the top two singular values of $\E\bbX$, which can be naturally estimated by the top two singular values of $\bbX$. Further note that $\bbu_1$ and  $\bbu_2$ are the top two right singular vectors of $\E\bbX$, which can be naturally estimated by $\widehat \bbu_1$ and $\widehat \bbu_2$, the top two right singular vectors of $\bbX$. One useful technique in the literature for obtaining these sample estimates is to consider the linearization matrix
$$\mathcal{Z}=\left(
\begin{array}{cc}
0 & \bbX^{\top} \\
\bbX & 0 \\
\end{array}
\right),$$
which is a symmetric random matrix with low-rank mean matrix.   It can be shown that the top two singular values of $\bbX$ are the same as the top two eigenvalues of $\mathcal{Z}$, and the corresponding singular vectors of $\bbX$, after appropriate rescaling, are the subvectors of the top two eigenvectors of $\mathcal{Z}$. See detailed discussions on the relationship in the  subsection \ref{subsec: eigen sel}.

 It has been proved in the literature that for random matrices with expected low rank structure, such as $\mathcal{Z}$,  the estimation accuracy of spiked eigenvectors largely depends on the magnitudes of the corresponding eigenvalues.  Specifically, as shown in \cite{abbe2017entrywise}, the entrywise estimation error for each spiked eigenvector is of order inversely proportional to the magnitude of the corresponding eigenvalue. Thus, dense eigenvector may be estimated very poorly unless the corresponding eigenvalue has a large magnitude, that is, highly spiked.  The results in \cite{abbe2017entrywise} apply to a large Gaussian ensemble matrix with independent entries on and above the diagonal.  Similar conclusions can be found in \cite{FF18} and \cite{bao2018singular} under Wigner or generalized Wigner matrix assumption.

Since spectral clustering is applied to estimated eigenvectors,  the above-mentioned existing results  suggest that in a high-dimensional two-class clustering, one should drop the second eigenvector in spectral clustering if the corresponding eigenvalue is not spiked enough, unless it is absolutely necessary to include it, when, for example, the first spiked population eigenvector has no clustering power.

On the other extreme, if the two spiked eigenvalues are the same, that is, in the case of multiplicity, by part (b) of Theorem \ref{thm: population-eg},  at least one of $\bbu_1$ and $\bbu_2$ has clustering power. We argue that in this situation, at the sample level it is better to use both spiked eigenvectors in clustering for at least two reasons.
First, by Proposition \ref{t1} to be presented in Section \ref{Sec2.1} and the remark after it,  each $d_i$, $i=1, 2$ can only be estimated with accuracy $O_p(1)$. Therefore, detecting the exact multiplicity can be challenging. Second,  the two spiked population eigenvectors are not identifiable. The two spiked sample eigenvectors estimate some rotation of  $(\bbu_1, \bbu_2)$, each with estimation accuracy of order inversely proportional to $d_1$ (or $d_2$) \citep{abbe2017entrywise}.  Thus, even in the worst case where exactly one eigenvector is useful, including both in clustering will not deteriorate the clustering result much because the additional estimation error caused by the useless eigenvector is the same order as caused by the useful eigenvector. In view of the discussions above, we update the oracle procedure as follows. Our implementable algorithm will mimic the  oracle procedure below.

\begin{algorithm}[h]
	\caption{[Oracle Procedure 2]}
	\begin{algorithmic}[1]
	    \State Set $\mathcal U = \emptyset$.
	\State  Check whether $d_1^2/d_2^2<1+c_n$ with $c_n>0$ some threshold depending on $n$ to be specified. If yes, add both $\bbu_1$ and $\bbu_2$ to $\mathcal U$ and go to  Step 4; If no,  go to Step 3.
	\State  Check whether $\bbu_1$ has two distinct values in its components. If yes,  add $\bbu_1$ to $\mathcal U$ and go to Step 4; If no, add $\bbu_2$ to $\mathcal U$ and go to Step 4.
	\State  Return $\mathcal U$.
	\State Use eigenvector(s) in $\mathcal U$ for clustering.
\end{algorithmic}\label{or-pro2}
\end{algorithm}

In step 2 of Oracle Procedure \ref{or-pro2},  a positive sequence $c_n$ is to help check whether $d_1^2$ and $d_2^2$ are close enough. We include a buffer $c_n$ because, in implementation, $d_1$ and $d_2$ are estimated with errors. As discussed above, the rationale behind step 3 is that when the second eigenvalue is much smaller than the first one, and so the  estimated second eigenvector can be too noisy to be included for clustering, we use the estimated second  eigenvector only when the first one is not usable.  Oracle Procedure \ref{or-pro2} prepares us to introduce our final practical selection procedure.

\subsection{Comparison with a centering procedure}\label{sec: comp}
 {\color{black} We digress here to discuss an existing procedure that drops an  eigenvector.  Concretely, a few works, such as IF-PCA,  employ a step to first subtract the mean from the data.  As will be demonstrated next,  this approach reduces the second largest eigenvalue to 0 under our model, and thus always only uses the leading eigenvector for clustering. This can be advantageous under special conditions. However, we will also provide examples where our  approach can be superior. For this reason, we choose not to consider the centering procedure in detail in our paper.
 	
 Let $\bar{\bbx}=\frac{1}{n}\sum_{i=1}^n\bbx_i$ and recall model \eqref{0830.1h}. By subtracting the expectation $\E\bar{\bbx}=\frac{n_1\bmu_1}{n}+\frac{n_2\bmu_2}{n}$, the model becomes
   \begin{equation}\label{0830.1nh}
\bx_i-\E\bar{\bbx}=Y_i\frac{n_2(\bmu_1-\bmu_2)}{n}+(1-Y_i)\frac{n_1(\bmu_2-\bmu_1)}{n}+\bw_i,\ i=1,\ldots,n\,,
\end{equation}
from which we can derive that
$$\text{rank} (\bold{C}) := \text{rank}\left( (\E \bbX-(\E\bar{\bbx})\mathbf{1}_n^\top)(\E\bbX-(\E\bar{\bbx})\mathbf{1}_n^\top)^{\top}\right)=\text{rank}\left(\frac{n_1n_2}{n}(\bmu_1-\bmu_2)(\bmu_1-\bmu_2)^\top\right)=1\,.$$
Hence, the second eigenvalue of $\bold{C}$ is always 0, and the first eigenvalue, denoted by $d_1^2(\bold{C})$, is
\begin{equation}\label{gt1}
  d_1^2(\bold{C})=\frac{n_1n_2\|\bmu_1-\bmu_2\|_2^2}{n}\,.
\end{equation}
}
By comparing \eqref{gt1} with \eqref{0122.7h} and \eqref{0122.7ht}, we see that the effect of the demean step can be complicated.   For one example, if $\bmu_1=-\bmu_2$ and $n_1=n_2$, then $d_1^2=nc_{11}$, $\E\bar\bbx=0$ and $d_1^2(\bold{C})=nc_{11}=d_1^2$. In this case we can see that the demean approach  is desirable. On the other hand, if $c_{12}=0$, then $d_1^2 = \max\{n_1c_{11}, n_2c_{22}\} $ and $d_2^2 = \min\{n_1c_{11}, n_2c_{22}\}$, whereas \eqref{gt1} becomes $ d_1^2(\bold{C})=\frac{ n_1n_2(c_{11}+c_{22})}{n}$, which lies between $d_1^2$ and $d_2^2$. Therefore in this case, the demean approach shrinks the first eigenvalue which reduces the signal strength (cf. the discussion after Oracle Procecdure \ref{or-pro1}).


\subsection{Eigen Selection Algorithm} \label{subsec: eigen sel}
  The two oracle procedures discussed in subsection \ref{subsec::preml} assume the knowledge of $\bbH$.  In practice, we observe $\bbX$ instead of $\bbH$. Next, we will elevate our reasoning on $\bbH$ to that on $\bbX$ and propose an implementable algorithm for eigenvector selection.
 Denote by $\widehat \bbu_1$ and $\widehat \bbu_2$ the eigenvectors of the matrix
  $$\widehat \bbH :=\bbX^\top\bbX\,,$$
  corresponding to the two largest eigenvalues $\widehat t^2_1$ and $\widehat t^2_2$ ($\widehat t_1\ge\widehat t_2\ge 0$) of $\widehat \bbH$, respectively. As discussed after Oracle Procedure \ref{or-pro1}, $\widehat t_1$ and $\widehat t_2$ are the top singular values of $\bbX$, and $d_1$ and $d_2$ are the top singular values of $\E\bbX$. Thus, $\widehat t^2_1$ and $\widehat t^2_2$ estimate $d_1^2$ and $d_2^2$, respectively.
 Further note that $\widehat \bbu_1$ and $\widehat \bbu_2$ are  the top two right singular vectors of $\bbX$,  while $\bbu_1$ and  $\bbu_2$ are the top two right singular vectors of $\E\bbX$.    Under some conditions, when $d_1^2/d_2^2\neq 1$, i.e., no multiplicity, we have $\widehat\bbu_1(i)\approx \bbu_1(i)$ and $\widehat\bbu_2(i)\approx \bbu_2(i)$.   {\color{black}Moreover, when $d_1^2=d_2^2$,} it is only possible for us to show that $(\widehat\bbu_1,\widehat\bbu_2)\approx(\bbu_1,\bbu_2)\widehat\bbU$ (e.g., by Davis-Kahan Theorem), where $\widehat\bbU$ is some $2\times 2$ orthogonal matrix.  Spectral clustering clusters $\bx_i$'s into two groups by dividing the coordinates of $\widehat\bbu_1$ (and$\backslash$or $\widehat\bbu_2$) into two groups via the k-means algorithm. In some scenarios,  $d_2$ is small (compared to $d_1$) and $\widehat \bbu_2$ is significantly disturbed  by the noise matrix $\bbX-\E\bbX$; in these scenarios, $\widehat \bbu_2$  is likely not good enough to distinguish the memberships. Putting these observations together, Oracle Procedure \ref{or-pro2} can be implemented by replacing $(d_i, \bbu_i)$ with the sample version $(\widehat t_i, \widehat \bbu_i)$, $i=1,2$.

As briefly discussed in subsection \ref{subsec::preml}, for easier analysis of the eigenvalues and eigenvectors of $\widehat \bbH = \bbX^\top\bbX$, we consider the linearization matrix $\mathcal {Z}$.  It can be shown that the top two eigenvalues of
$\mathcal{Z}$ are $\widehat t_1$ and $\widehat t_2$.
 Let $\widehat \bbv_1$ and $\widehat\bbv_2$ be the eigenvectors of $\mathcal{Z}$ corresponding to $\widehat t_1$ and $\widehat t_2$ respectively,  and $\widehat\bbv_{-1}$ and $\widehat\bbv_{-2}$ are the eigenvectors of $\mathcal{Z}$ corresponding to $-\widehat t_1$ and $-\widehat t_2$ respectively.  By Lemma \ref{0612-1} in the Supplementary Material, $\pm d_1$ and $\pm d_2$ are the eigenvalues of $\E\mathcal{Z}$, and the vector consisting of the first $n$ entries of the eigenvector of $\E\mathcal{Z}$ corresponding to $d_k$ equals $\frac{\bbu_k}{\sqrt 2}$, $k=1, 2$.
 Moreover, the vector consisting of  the first $n$ entries of the eigenvector of $\mathcal{Z}$ corresponding to $\widehat t_k$ equals $\frac{\widehat \bbu_k}{\sqrt 2}$, $k=1,2$.
 Given these correspondences, we will leverage the two largest eigenvalues of $\mathcal{Z}$ and the corresponding eigenvectors for clustering.

Based on the discussions above, we propose Algorithm \ref{alg1}:  \textbf{E}igen-\textbf{S}elected \textbf{S}pectral \textbf{C}lustering Algorithm (ESSC).
Let $\tau_n$ and $\delta_n$ be two diminishing positive sequences (i.e., $\tau_n+\delta_n=o(1)$) and $\bbu_0$ be an $(n+p)$-dimensional vector in which the first $n$ entries are $1$ and the last $p$ entries are $0$. In numerical implementation, we choose $\tau_n=\log^{-1}(n+p)$ and $\delta_n=\log^{-2}(n+p)$, which are guided by Theorems \ref{t2}--\ref{t3}.  Moreover, let $\mathfrak{f}=n^{-1/2}|\bbu_0^\top\widehat\bbv_1|-2^{-1/2}$.
Note that if all entires of the unit vector $\bbu_1$ are equal, then $|\bbu_0^\top \bbv_1| = |\frac{1}{\sqrt{2}}\bbu_1(1) + \ldots +\frac{1}{\sqrt{2}} \bbu_1(n)| = \left(n/2\right)^{1/2}$, where $\bbv_1$ is the unit eigenvector of $\E\mathcal{Z}$ corresponding to $d_1$.  Hence, checking whether $|\mathfrak{f}|$ is small enough (e.g., $|\mathfrak{f}| < \delta_n$) is a reasonable substitute for checking whether $\bbu_1$ has all equal entries.

 \begin{algorithm}[h]
\caption{[Eigen-Selected Spectral Clustering (ESSC)]}
\begin{algorithmic}[1]
	\State Set $\widehat{\mathcal U} = \emptyset$.
\State Calculate $\widehat t_1$ and $\widehat t_2$ and the corresponding eigenvectors $\widehat \bbv_1$ and $\widehat\bbv_2$ from $\mathcal{Z}$.  Form $\widehat \bbu_1$ and $\widehat \bbu_2$ using the first $n$ entries of $\widehat \bbv_1$ and $\widehat \bbv_2$, respectively.  \label{0505.2}
\State  Check whether ${\widehat t_1}/{\widehat t_2}<1+\tau_n$. If yes, add both $\widehat \bbu_1$ and $\widehat \bbu_2$ to $\widehat{\mathcal U}$ and go to Step 5; if no, go to Step 4.
\State  Check if $|\mathfrak{f}|\ge \delta_n$.  If yes,  add $\widehat\bbu_1$ to $\widehat{ \mathcal U}$ and go to Step 5; if no, add $\widehat \bbu_2$ to $\widehat{\mathcal U}$ and go to Step 5.
\State  Return $\widehat{\mathcal U}$.
\State Apply the $k$-means algorithm to vector(s) in $\widehat{\mathcal U}$ to cluster $n$ instances into two groups.
\end{algorithmic}\label{alg1}
\end{algorithm}

 \section{Theory} \label{Sec2.1}

In this section, we derive a few theoretical results that support the steps 3 and 4 of Algorithm \ref{alg1}.  {\color{black}
 We first prove in Proposition \ref{t1} asymptotic expansions
 for eigenvalues  $\widehat t_1$ and $\widehat t_2$.
 In addition to motivating our handling of multiplicity as discussed in the previous section, these results potentially allow us to design a thresholding procedure on either $\widehat t_1 - \widehat t_2$ or $\widehat t_1 / \widehat t_2$ to detect the multiplicity of eigenvalues.   Indeed,  our proposition fully characterizes the behavior of $\widehat t_1$ and $\widehat t_2$, so that we can derive an expansion for $\widehat t_1 - \widehat t_2$,  but this expansion  depends on the covariance matrix $\bSig$ (see Remark \ref{rem4}), which is not easy  to estimate without the class label information.
 Similarly, an expansion of $\widehat t_1 / \widehat t_2$ would involve $\bSig$.  These concerns motivate us to resort to a less accurate but empirically feasible detection rule for eigenvalue multiplicity. Concretely, we derive concentration results regarding $\widehat t_1 / \widehat t_2$, which do not rely on estimates of $\bSig$ and they give rise to step 3 of Algorithm \ref{alg1}.
} {\color{black}Theorems \ref{t2}--\ref{t3} provide a guarantee for using diminishing positive sequences $\tau_n$ and $\delta_n$  as thresholds for steps 3 and 4 in Algorithm \ref{alg1}.}
We adopt the following assumption in the theory section.

{\color{black}
\begin{assu}\label{a2}
 (i) The eigenvalues of $\bSig$ are bounded away from $0$ and $\infty$. (ii) $n^{1/C}\le p\le n^C$ for some constant $C>0$.
\end{assu}
}

%

Before presenting Proposition \ref{t1}, we will introduce population quantities $t_1$ and $t_2$, which are asymptotically equivalent to population eigenvalues  $d_1$ and $d_2$. We will establish  below that $t_1$ and $t_2$ are indeed the asymptotic means of $\widehat t_1$ and $\widehat t_2$, respectively.  As we work on $\mathcal{Z}$, a linearization of $\widehat \bbH$, we will investigate $\E \mathcal{Z}$ and $\mathcal{Z} - \E \mathcal{Z}$.   Let the eigen decomposition of $\E\mathcal{Z}$ be $$\E\mathcal{Z}=\left[d_1(\bbv_1\bbv_1^\top-\bbv_{-1}\bbv_{-1}^\top)+d_2(\bbv_2\bbv_2^\top-\bbv_{-2}\bbv_{-2}^\top)\right]\,,$$
in which $\bbv_1$ and $\bbv_2$ are the unit eigenvectors  corresponding to $d_1$ and $d_2$, $\bbv_{-1}$ and $\bbv_{-2}$ are the unit eigenvectors corresponding to $-d_1$ and $-d_2$.

Define $\bbV=(\bbv_1,\bbv_2)$, $\bbV_{-}=(\bbv_{-1},\bbv_{-2})$  and $\bbD=\diag(d_1,d_2)$. Then the eigen decomposition of  $\E\mathcal{Z}$ can be written as
\begin{equation}\label{0506.1}
\E\mathcal{Z}=\bbV\bbD\bbV^\top-\bbV_-\bbD\bbV^\top_-\,.
 \end{equation}
Moreover, let
\begin{equation}\label{0506.2}\bbW=\mathcal{Z}-\E\mathcal{Z}=\left(
  \begin{array}{ccc}
  0& (\bbX-\E\bbX)^\top \\
  \bbX-\E\bbX & 0 \\
  \end{array}
\right)\,.
 \end{equation}
For complex  variable $z$, and any matrices (or vectors) $\bbM_1$ and $\bbM_2$ of suitable dimensions, we define the following notations.
 \begin{equation}\label{0214.2}
 \mathcal{R}(\bbM_1,\bbM_2,z)=-\sum_{{l=0,\,l\neq 1}}^L z^{-(l+1)}\bbM_1^\top\E\bbW^l\bbM_2\,,
 \end{equation}

 and
 \begin{equation}\label{eq2}
 f(z)=\left(                 %
 \begin{array}{ccc}   
 f_{11}(z)& f_{12}(z) \\  %
 f_{21}(z) & f_{22}(z) \\  %
 \end{array}
 \right) =\bbI+\bbD\Big(\mathcal{R}(\bbV,\bbV,z)-\mathcal{R}(\bbV,\bbV_{-},z)\big(-\bbD+\mathcal{R}(\bbV_{-},\bbV_{-},z)\big)^{-1}\mathcal{R}(\bbV_{-},\bbV,z)\Big)\,.
 \end{equation}

\begin{lem}\label{0508-1}
Denote by $a_n= d_2-\sigma_n$ and $b_n= d_1+\sigma_n$. 
 Assume that
\begin{equation}\label{0513.2}
  d_1-d_2 = o(\sqrt{d_2}) \ \text{and}\ d_2 \gg \sigma_n^{4/3}\,,
 \end{equation}
 then we have the following conclusions
\begin{itemize}
\item[1.] The equation
\begin{equation}\label{0518.7}
\det(f(z))=0\,,
\end{equation}
in which $f(z)$ is defined in \eqref{eq2},   has at most two solutions in $[a_n,b_n]$. We denote these solutions by $t_1$ and $t_2$ with $t_2\le t_1$.
\item[2.]
\begin{equation}\label{0518.8}
t_k-d_k=O\left(\frac{\sigma_n^2}{d_2}\right), \ k=1,2\,.
\end{equation}
\end{itemize}

\end{lem}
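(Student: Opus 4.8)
The plan is to treat $\mathcal{Z}$ as a low-rank perturbation of $\bbW$, turn the eigenvalue equation into a fixed-point equation through the resolvent expansion, and then carry out a perturbation/implicit-function-theorem argument on the $2\times 2$ matrix-valued function $f(z)$.

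First I would recall the standard self-consistent equation for eigenvalues of $\E\mathcal{Z}+\bbW$. Since $\E\mathcal{Z}=\bbV\bbD\bbV^\top-\bbV_-\bbD\bbV_-^\top$ has rank $4$, any eigenvalue $z$ of $\mathcal{Z}$ outside the spectrum of $\bbW$ must satisfy a determinantal equation of the form $\det\bigl(\bbI-[\text{low-rank factor}]\,G_{\bbW}(z)\,[\text{low-rank factor}]\bigr)=0$, where $G_{\bbW}(z)=(\bbW-z)^{-1}$. Writing $G_{\bbW}(z)$ as the truncated Neumann series $-\sum_{l=0}^L z^{-(l+1)}\bbW^l$ plus a negligible tail (valid for $z$ bounded away from $\|\bbW\|\asymp\sigma_n$, which holds on $[a_n,b_n]$ under \eqref{0513.2}), and replacing $\bbW^l$ by its expectation $\E\bbW^l$ up to a concentration error, one recognizes the block structure that produces exactly the Schur complement appearing in $f(z)$: the $\bbV_-$ block is inverted out via $(-\bbD+\mathcal{R}(\bbV_-,\bbV_-,z))^{-1}$, leaving the reduced $2\times2$ equation $\det f(z)=0$. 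I would make precise the claim that on $[a_n,b_n]$ the relevant resolvent restricted to the $\bbV_-$ subspace is invertible: there $-\bbD+\mathcal{R}(\bbV_-,\bbV_-,z)$ is dominated by $-\bbD$ since the entries of $\mathcal{R}$ are $O(\sigma_n^2/z)=O(\sigma_n^2/d_2)=o(d_2)$ by the second half of \eqref{0513.2}.

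Next, for the counting statement (part 1), I would analyze $g(z):=\det f(z)$ on $[a_n,b_n]$. Using $f(z)=\bbI+\bbD\cdot(\text{small})$ with the correction term of size $O(\sigma_n^2/d_2^2)$ away from the relevant scale, the leading behavior of $g$ is governed by the two "diagonal" near-solutions where $f_{kk}(z)$ vanishes, i.e. where $1 + d_k\cdot(-z^{-1}) + (\text{higher order})=0$, whose roots are close to $d_k$. A monotonicity/derivative estimate — showing $g'(z)$ has a controlled sign pattern, or that $g$ changes sign at most twice — then bounds the number of roots in $[a_n,b_n]$ by two. The condition $d_1-d_2=o(\sqrt{d_2})$ ensures that the two near-roots $d_1,d_2$ both lie in the interval $[a_n,b_n]=[d_2-\sigma_n,d_1+\sigma_n]$ (so we don't lose a solution off the interval) and that the off-diagonal entries $f_{12},f_{21}$, which are $O(\sigma_n^2/d_2)$, are genuinely lower order than the gap-scale $\sqrt{d_2}$ only in a relative sense — this is where the closeness hypothesis is used to keep the perturbation analysis uniform. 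I then define $t_1\ge t_2$ to be these (at most two) solutions.

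For part 2 — the bound $t_k-d_k=O(\sigma_n^2/d_2)$ — I would localize: write $t_k=d_k+\Delta_k$ and plug into the scalar equation obtained from $\det f(z)=0$ after accounting for the Schur-complement correction. The equation reads, schematically, $1 - d_k/z + (\text{correction of size }O(\sigma_n^2/d_2^2)) + (\text{off-diagonal}^2\text{ term of size }O(\sigma_n^4/d_2^4))=0$ evaluated at $z=d_k+\Delta_k$; solving for $\Delta_k$ and using $|\Delta_k|\ll d_k$ gives $\Delta_k = d_k\cdot O(\sigma_n^2/d_2^2) = O(\sigma_n^2/d_2)$. Some care is needed in the multiplicity-ish regime $d_1\approx d_2$: there the two diagonal roots nearly collide, and the off-diagonal entries of $f$ can matter at leading order, so rather than isolating each $f_{kk}$ I would diagonalize the $2\times2$ symmetric-ish matrix $\bbD\,\mathcal{R}(\bbV,\bbV,z)$ etc. and track its two eigenvalues, showing each perturbs $d_1,d_2$ by at most $O(\sigma_n^2/d_2)$.

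The main obstacle I anticipate is the regime where $d_1-d_2$ is genuinely small (down to the $o(\sqrt{d_2})$ threshold): there one cannot treat $d_1$ and $d_2$ as well-separated when solving the reduced $2\times2$ equation, so the naive scalar perturbation of each diagonal entry fails and the off-diagonal terms $f_{12},f_{21}$ must be carried through. Handling this uniformly — and simultaneously controlling the truncation error of the Neumann series and the $\bbW^l\to\E\bbW^l$ concentration error so that both are $o(\sigma_n^2/d_2)$ relative to the quantities being estimated — is the delicate part; it presumably relies on moment/concentration bounds for $\bbW$ (a Gaussian block matrix) that are established in the supplementary lemmas, together with the sharp lower bound $d_2\gg\sigma_n^{4/3}$, which is exactly what makes $\sigma_n^2/d_2 \ll d_2^{1/2}$ and hence makes the perturbation self-consistent.
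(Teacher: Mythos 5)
Your proposal correctly identifies the structural ingredients (Schur complement structure of $f$, resolvent/Neumann expansion, concentration of $\bbW^l$ around $\E\bbW^l$, and the role of $d_2\gg\sigma_n^{4/3}$ in making $\sigma_n^2/d_2$ a self-consistent perturbation scale), and in this sense you are in the right neighborhood. But the two parts of the lemma are handled by the paper in a more specific and, I think, cleaner way than you sketch, and there is one genuine gap in your part 1.

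For part 1, the paper does not argue via ``sign changes of $g$'' or a generic ``controlled sign pattern of $g'$''; it shows that $z\mapsto\det(f(z))$ is \emph{strictly convex} on $[a_n,b_n]$, by computing $(\det f(z))''=-2d_1/z^3-2d_2/z^3+6d_1d_2/z^4+o(d_1d_2/a_n^4)$, which is $\sim 2/z^2>0$ because $z\sim d_1\sim d_2$ on the interval. Strict convexity immediately caps the number of roots at two, with no further case analysis. Your phrasing that ``$g$ changes sign at most twice'' does not by itself bound the number of zeros (a function can touch zero tangentially without a sign change), and ``$g'$ has a controlled sign pattern'' is not a proof. You would need to commit to and verify the second-derivative/convexity estimate; as written this step is not closed. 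You also omit the boundary evaluations $\det(f(a_n))>0$ and $\det(f(b_n))>0$, which the paper establishes to guarantee \emph{existence} of $t_1,t_2$ (not only the upper bound on the count) and to set up the sandwich used in part 2.

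For part 2 you propose diagonalizing the $2\times2$ matrix and tracking its eigenvalues, and you correctly flag the near-multiplicity regime as the difficulty. That is a genuinely different route from the paper's. The paper instead defines $\tilde t_1,\tilde t_2$ as the roots of the diagonal entries $f_{11},f_{22}$ (each localized to $d_k+\bbv_k^\top\E\bbW^2\bbv_k/d_k+O(\sigma_n^3/d_k^2)$ by a scalar monotonicity argument, since $f_{kk}'\sim1/d_k$), observes from $\det f=f_{11}f_{22}-f_{12}f_{21}$ that $\det f(\tilde t_k)\le0$, and then combines the sandwich $t_2\le\tilde t_2\le\tilde t_1\le t_1$ with positivity of $\det f$ at the explicit points $d_1+2\bbv_1^\top\E\bbW^2\bbv_1/d_1+2\bbv_2^\top\E\bbW^2\bbv_2/d_2$ and $d_2-2\bbv_1^\top\E\bbW^2\bbv_1/d_1-2\bbv_2^\top\E\bbW^2\bbv_2/d_2$ and convexity to pin $t_1,t_2$ to within $O(\sigma_n^2/d_2)$ of $d_1,d_2$. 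This sidesteps eigenvector perturbation near a degenerate spectrum entirely, which is exactly the delicacy you anticipate with the diagonalization route. Your approach could in principle be pushed through (the correction matrix is $O(\sigma_n^2/d_2)$ in norm and can be frozen at $z\approx d_1$), but you would have to control the $z$-dependence of the would-be diagonalizing basis uniformly in the collision regime, which is avoidable. Finally, a small point of accuracy: the hypothesis $d_1-d_2=o(\sqrt{d_2})$ is not needed to place $d_1,d_2$ in $[a_n,b_n]$ (that is automatic); it is used to ensure $d_1/d_2\to1$ so that the whole interval sits at a single scale and the convexity and boundary estimates are uniform.
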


Equation \eqref{0513.2} is a signal strength assumption requiring that the top two eigenvalues should be spiked enough, and that the second eigenvalue cannot be much smaller than the top eigenvalue.  In fact, \eqref{0513.2} implies that $d_1/d_2 \rightarrow 1$, that is, close to multiplicity. Under such conditions, Lemma \ref{0508-1} guarantees the existence of $t_1$ and $t_2$. Moreover, this lemma provides a guarantee that $\frac{t_1}{d_1}$ and $\frac{t_2}{d_2}$ are asymptotically close to  $1$.
The following proposition is established by carefully analyzing the behavior of $\widehat t_k$ around $t_k$, $k=1,2$.
\begin{prop}\label{t1}
 Under  Assumption \ref{a2} and \eqref{0513.2},  we have

	\begin{equation}\label{th1}
	\widehat t_1-t_1=\frac{1}{2}\left[-g_{11}(t_1)-g_{22}(t_1)+\left\{\left(g_{11}(t_1)+g_{22}(t_1)\right)^2-4\left(g_{11}(t_1)g_{22}(t_1)-g^2_{12}(t_1)\right)\right\}^{\frac{1}{2}}\right]+o_{p}(1)\,,
	\end{equation}
	\begin{equation}\label{th2}
	\widehat t_2-t_2=\frac{1}{2}\left[-g_{11}(t_2)-g_{22}(t_2)-\left\{\left(g_{11}(t_2)+g_{22}(t_2)\right)^2-4\left(g_{11}(t_2)g_{22}(t_2)-g^2_{12}(t_2)\right)\right\}^{\frac{1}{2}}\right]+o_{p}(1)\,,
	\end{equation}
	where $g_{11}, g_{12}, g_{21}$ and $g_{22}$ are defined in
	\begin{equation}\label{0227.1}
	g(z)=\left(
	\begin{array}{ccc}
	g_{11}(z)& g_{12}(z) \\
	g_{21}(z) & g_{22}(z) \\
	\end{array}
	\right)=z^2\bbD^{-1}f(z)-\bbV^\top\bbW\bbV\,.
	\end{equation}
	\noindent For $\widehat t_2$, we also have an alternative expression
	\begin{equation}\label{eq:t2hat_t1}
	\widehat t_2-t_1=\frac{1}{2}\left[-g_{11}(t_1)-g_{22}(t_1)-\left\{\left(g_{11}(t_1)+g_{22}(t_1)\right)^2-4\left(g_{11}(t_1)g_{22}(t_1)-g^2_{12}(t_1)\right)\right\}^{\frac{1}{2}}\right]+o_{p}(1)\,.
	\end{equation}
\end{prop}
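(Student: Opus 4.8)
\textbf{Proof proposal for Proposition \ref{t1}.}

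The plan is to treat $\widehat t_1$ and $\widehat t_2$ as the two roots in the interval $[a_n, b_n]$ of a suitable master equation, obtained by the same resolvent/Schur-complement machinery that produced $t_1$ and $t_2$ in Lemma \ref{0508-1}, but now for the random matrix $\mathcal{Z}$ rather than for $\E\mathcal{Z}$. Concretely, since $\widehat t_k$ is an eigenvalue of $\mathcal{Z} = \E\mathcal{Z} + \bbW$, and $\E\mathcal{Z} = \bbV\bbD\bbV^\top - \bbV_-\bbD\bbV_-^\top$ has rank $4$, one writes the eigen-equation $\det(z\bbI - \mathcal{Z}) = 0$, inverts $z\bbI - \bbW$ via a finite Neumann-type expansion (valid because $\|\bbW\| \le \sigma_n \ll z$ on $[a_n,b_n]$ by \eqref{0513.2}), and reduces to a $4\times 4$ (then, after the Schur complement eliminating the $\bbV_-$ block, a $2\times 2$) determinant condition. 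This is exactly how $f(z)$ in \eqref{eq2} arises at the population level; at the sample level the analogous object is a perturbed version $\widehat f(z)$ whose entries differ from $f(z)$ by terms built from $\bbV^\top\bbW\bbV$, $\bbV^\top\bbW(\text{resolvent})\bbW\bbV$, and cross terms. The key algebraic identity to establish is that the sample master equation, after multiplying through by $z^2\bbD^{-1}$, takes the form $\det\big(g(z) + \widehat t\,\bbI + (\text{negligible})\big) = 0$ with $g(z)$ as in \eqref{0227.1}; i.e., $\widehat t_k$ is, up to $o_p(1)$, an eigenvalue of $-g(t_k)$ where $g$ is evaluated at the deterministic anchor $t_k$.

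First I would record the deterministic estimates: $\|\bbW\| = O_p(\sigma_n)$ (standard for a Gaussian block matrix with covariance $\bSig$ having bounded spectrum, using Assumption \ref{a2}), and the separation estimates on $[a_n,b_n]$ coming from \eqref{0513.2} and \eqref{0518.8}, so that all resolvents $(z\bbI - \bbW)^{-1}$ restricted to the relevant range are well-controlled and the truncation of the Neumann series at order $L$ contributes only $o_p(1)$ to $z$. Second, I would expand the sample determinant condition around $z = t_1$ (respectively $z=t_2$): write $z = t_k + s$ with $s = O_p(1)$ (this is the crucial scale, justified by the remark that $\widehat t_k$ is only estimable to $O_p(1)$), Taylor-expand in $s$, and observe that the leading behavior of $g(z)$ near $t_k$ is $g(t_k) + s\bbI + o_p(1)$ — the $s\bbI$ coming from the $z^2\bbD^{-1} f(z)$ term whose derivative structure, combined with $f(t_k)$ being (near-)singular by Lemma \ref{0508-1}, contributes the identity at first order while the $\bbW$-dependent part $\bbV^\top\bbW\bbV$ is $O_p(\sigma_n)$-bounded and slowly varying in $s$. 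Hence the condition becomes $\det\big(s\bbI + g(t_k)\big) = o_p(1)$, a $2\times 2$ characteristic equation whose two roots are precisely the two expressions in \eqref{th1}–\eqref{th2} via the quadratic formula $s = \tfrac12[-\mathrm{tr}(g) \pm \sqrt{\mathrm{tr}(g)^2 - 4\det(g)}]$ with $\mathrm{tr}(g(t_k)) = g_{11}(t_k)+g_{22}(t_k)$ and $\det(g(t_k)) = g_{11}(t_k)g_{22}(t_k) - g_{12}^2(t_k)$ (using $g_{12} = g_{21}$, which I would verify from symmetry of $\bbW$ and the definition \eqref{0227.1}). Third, I would argue root-matching: $\widehat t_1$ is the larger of the two solutions in $[a_n,b_n]$ and lies near $t_1$, so it must correspond to the "$+$" root anchored at $t_1$, giving \eqref{th1}; similarly $\widehat t_2$ is the smaller solution near $t_2$, giving the "$-$" root anchored at $t_2$, i.e. \eqref{th2}. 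The alternative expression \eqref{eq:t2hat_t1} follows because, under \eqref{0513.2}, $t_1$ and $t_2$ differ by $o(\sqrt{d_2})$ which is $o_p(1)$ after the relevant normalization, so the "$-$" root of the characteristic equation anchored at $t_1$ and the one anchored at $t_2$ agree up to $o_p(1)$; this requires checking that $g_{ij}(t_1) - g_{ij}(t_2) = o_p(1)$, which reduces to Lipschitz control of $g$ on $[a_n,b_n]$ together with $|t_1 - t_2| = o(\sqrt{d_2})$.

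The main obstacle I anticipate is controlling the higher-order fluctuation terms in the Neumann expansion of the sample resolvent — the pieces involving $\bbV^\top\bbW(z\bbI-\bbW)^{-1}\bbW\bbV$ and the analogous cross-block terms with $\bbV_-$ — and showing that, after the Schur-complement reduction and multiplication by $z^2\bbD^{-1}$, their net contribution to the characteristic equation near $z = t_k$ is $o_p(1)$ \emph{on the $O_p(1)$ scale}, not merely $O_p(\sigma_n^2/d_2)$ in some cruder sense. This is delicate because $d_1/d_2 \to 1$ forces the two roots close together, so the discriminant $\mathrm{tr}(g)^2 - 4\det(g)$ can be small and one must rule out that the neglected terms distort which root goes with which eigenvalue; the sign of the square root and the ordering $\widehat t_2 \le \widehat t_1$ must be tracked carefully. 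I expect this to rely on a quantitative concentration bound for quadratic forms $\bbv^\top \bbW^l \bbv$ in the Gaussian noise (a Hanson–Wright-type inequality) uniformly over the $O(\log)$-many terms and over $z$ in a net of $[a_n,b_n]$, plus a continuity/monotonicity argument (the master function is, up to $o_p(1)$, monotone in $z$ between the two roots) to pin down the roots uniquely — presumably this is where Lemma \ref{0508-1} and the signal-strength assumption \eqref{0513.2} do their essential work, and where the bulk of the supplementary-material estimates would be invoked.
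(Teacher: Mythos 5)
Your high-level plan matches the paper's proof: reduce the eigen-equation of $\mathcal{Z}$ to a $2\times 2$ determinant condition via the resolvent $(\bbW-z\bbI)^{-1}$ and a Schur complement in the $\bbV_-$ block; show via a Neumann/Taylor expansion around the deterministic anchor that the condition at $\widehat t_k$ is, up to $o_p(1)$, the characteristic equation of $g(\cdot)$ with the shift $s\bbI$ coming from the leading $z^{-2}\bbI$ behavior of the derivative of the resolvent quadratic form; and then invoke the $2\times 2$ quadratic formula plus the ordering $\widehat t_1 \ge \widehat t_2$ for root-matching. Your observation that this hinges on moment/concentration control of $\bbv^\top\bbW^l\bbv$ is exactly what the paper's Lemma \ref{mos} supplies, and your worry about the near-multiplicity regime making the discriminant small and the neglected terms delicate is the correct thing to worry about.

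There is, however, a genuine gap in your derivation of the alternative expression \eqref{eq:t2hat_t1}. You propose to deduce it from \eqref{th2} by arguing that $g_{ij}(t_1)-g_{ij}(t_2)=o_p(1)$ via Lipschitz control plus $|t_1-t_2|=o(\sqrt{d_2})$. This claim is false: since $g(z)=z^2\bbD^{-1}f(z)-\bbV^\top\bbW\bbV$ and $g'(z)\approx \bbI$ on $[a_n,b_n]$ (indeed this is precisely the mechanism by which $s\bbI$ enters the expansion), one has $g_{ii}(t_1)-g_{ii}(t_2)\approx t_1-t_2$, and $t_1-t_2$ is $o(\sqrt{d_2})+O(\sigma_n^2/d_2)$, which need not be $o(1)$ when $d_2\to\infty$. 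Passing from $\widehat t_2-t_2$ to $\widehat t_2-t_1$ then requires a cancellation between the explicit $-(t_1-t_2)$ shift in the anchor and the diagonal shift $(t_1-t_2)\bbI$ inside $g$; you do not perform this cancellation, and the unqualified $o_p(1)$ claim hides it. The clean route, and the one the paper takes, is to note that expanding the sample determinant condition at the \emph{single} anchor $t_1$ simultaneously produces both roots $\widehat t_1-t_1$ and $\widehat t_2-t_1$ of the same $2\times 2$ characteristic polynomial $\det(g(t_1)+\Delta_{n4}+(\widehat t_k-t_1)\bbI)=0$, so \eqref{th1} and \eqref{eq:t2hat_t1} fall out together as the $+$ and $-$ roots, and \eqref{th2} comes from an independent repetition of the expansion anchored at $t_2$; no Lipschitz transfer between anchors is ever needed.

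One further small imprecision: you call $s=O_p(1)$ the crucial scale, but for the root $\widehat t_2-t_1$ the scale is $O_p(1)+|d_1-d_2|$, and under \eqref{0513.2} the spectral gap $|d_1-d_2|$ can be large. This is harmless because the $|d_1-d_2|$ contribution is exactly what the $t_1^2\bbD^{-1}f(t_1)$ piece of $g(t_1)$ carries (through $f_{22}(t_1)$), so it appears on both sides; but a careful write-up should verify that the mean-value/Taylor remainder remains $o_p(1)$ on this possibly larger scale, which is what the paper's bound on the derivative matrix $\bbB(z)$ in the expansion achieves.
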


Proposition \ref{t1} provides asymptotic expansions of $\widehat t_k$ around $t_k$ ($k = 1, 2$) that are not achievable by routine application of the Weyl's inequality.  Indeed,  Proposition \ref{t1} implies that the fluctuations of $\widehat t_k$ around $t_k$ is $O_p(1)$ (c.f., Lemma \ref{0625-1} in the Supplementary Material), while the Weyl's inequality gives  $|\widehat t_k-d_k|\le \|\bbW\|$, which, combined with  Lemma \ref{bod} in the Supplementary Material, implies that the fluctuation of $\widehat t_1-\widehat t_2$ around $d_1 - d_2$ is $O_p(\sigma_n)$. On the other hand, Proposition \ref{t1} also suggests that designing a statistical procedure by thresholding $\widehat t_1 - \widehat t_2$ would be a difficult task, as argued in detail in Remark \ref{rem4}.

\begin{rmk}\label{rem4}
	
	Equations \eqref{th1} and \eqref{eq:t2hat_t1} imply that
	\begin{equation}\label{0826.1h}
	\widehat t_1-\widehat t_2=\left\{\left(g_{11}(t_1)+g_{22}(t_1)\right)^2-4\left(g_{11}(t_1)g_{22}(t_1)-g^2_{12}(t_1)\right)\right\}^{\frac{1}{2}}+o_{p}(1)\,.
	\end{equation}
	To bound the main term in \eqref{0826.1h}, we calculate the variance and covariance of $\bbv_i\bbW\bbv_j$, $1\le i,j\le 2 $,  as follows.
	\begin{equation}\label{th3}
	\var(\bbv_i^\top\bbW\bbv_i)=4\bbw_i^\top\bSig\bbw_i, \ i=1,2\,,
	\end{equation}
	$$\var(\bbv_1^\top\bbW\bbv_2)=\bbw_1^\top\bSig\bbw_1+\bbw_2^\top\bSig\bbw_2, \ i=1,2\,,$$
	$$\cov(\bbv_i^\top\bbW\bbv_i,\bbv_1^\top\bbW\bbv_2)=2\bbw_1^\top\bSig\bbw_2, \ i=1,2\,,$$
	where $\bbw_i$ is the last $p$ entries of $\bbv_i$.
	Also note that
	\begin{equation}\label{0826.2h}
	\E\bbW^2=\diag(n\bSig,\tr\bSig)\,.
	\end{equation}
	Hence,
	$\bbv_1^\top\E\bbW^2\bbv_1-\bbv_2^\top\E\bbW^2\bbv_2=n(\bbw_1\bSig\bbw_1-\bbw_2\bSig\bbw_2) \text{ and } \bbv_1^\top\E\bbW^2\bbv_2=n\bbw_1\bSig\bbw_2.$
	By Lemma \ref{mos} in the Supplementary Material and \eqref{0214.2}, we have
	\begin{equation}\label{th4}
	\bbv_1^\top\E\bbW^2\bbv_1=\frac{1}{2}(n\bbw_1^\top\bSig\bbw_1+tr\bSig)\sim \sigma_n^2\,.
	\end{equation}
	By \eqref{0826.2h} and Assumption \ref{a2} on $\bSig$, for $\bbM_1$ and  $\bbM_2$ with finite columns and spectral norms, we have
	\begin{equation}\label{0826.3h}
	\|\mathcal{R}(\bbM_1,\bbM_2,t_1)+\sum_{{l=0,\,l\neq 1}}^2 t_1^{-(l+1)}\bbM_1^\top\E\bbW^l\bbM_2\|=O\left(\frac{\sigma_n^3}{t_1^2}\right)\,.
	\end{equation}
Then \eqref{th4},  \eqref{0826.3h}, Assumption \ref{a2} and the definition of $g(z)$ together imply that
\begin{equation}\label{0826.5h}
	\left|g_{ij}(t_1)-\frac{t_1^2}{d_i}-\frac{t_1^2}{d_i}\bbv_i^T\bbW\bbv_j+t_1+\frac{\bbv_i^T\E\bbW^2\bbv_j}{d_i}\right|=O\left(\frac{\sigma_3}{t_1^2}\right)\ll \frac{\bbv_1^T\E\bbW^2\bbv_1}{t_1}\,.
	\end{equation}
By Lemma \ref{0508-1} we have $t_1=d_1+O(\frac{\sigma_n^2}{d_2})$, \eqref{0826.5h} suggests that we have with probability tending to 1,
\begin{align}\label{c3}
	&\left\{\left(g_{11}(t_1)+g_{22}(t_1)\right)^2-4\left(g_{11}(t_1)g_{22}(t_1)-g^2_{12}(t_1)\right)\right\}^{\frac{1}{2}}\\
	&\le \left\{\left(\frac{t_1^2(d_1-d_2)}{d_1d_2}+\frac{\bbv_1^\top\E\bbW^2\bbv_1-\bbv_2^\top\E\bbW^2\bbv_2}{t_1}+\bbv_1^\top\bbW\bbv_1-\bbv_2^\top\bbW\bbv_2\right)^2+4\left(\frac{\bbv_1^\top\E\bbW^2\bbv_2}{t_1}+\bbv_1^\top\bbW\bbv_2\right)^2\right\}^{\frac{1}{2}}\\
	&+\ep \frac{\bbv_1^\top\E\bbW^2\bbv_1}{t_1}\,,\nonumber
\end{align}
for any positive constant $\ep$. Through \eqref{th3} and \eqref{th4}, we see that on both sides of \eqref{c3}, the information of $\bSig$ plays an important role. Therefore, a good thresholding procedure on $\widehat t_1-\widehat t_2$ would involve an accurate estimate of $\bSig$, which is difficult to obtain in the absence of label information.
\end{rmk}

Similar to the asymptotic expansion for $\widehat t_1 - \widehat t_2$, an asymptotic expansion for $\widehat t_1 / \widehat t_2$ would also involve the covariance matrix $\bSig$.  Nevertheless, the latter has better concentration property compared to the former, which  motivates us to consider a non-random thresholding rule on $\widehat t_1 / \widehat t_2$. The concentration property of $\widehat t_1 / \widehat t_2$ under different population scenarios is summarized  in  Theorem \ref{t2} and the first part of Theorem \ref{t3}, respectively, with the former corresponding to the case close to multiplicity and the latter corresponding to the case away from multiplicity.  Moreover, the second part of Theorem \ref{t3} validates the step 4 of ESSC. We would like to emphasize that Theorem \ref{t3} does not require $d_2$ to be spiked and thus can be applied even when $d_2=0$.
\begin{thm}\label{t2}
In addition to Assumption \ref{a2}, further assume that $d_1\gg \sigma_n$, $d_1/d_2\le 1+n^{-c}$ for all $n\ge n_0$,   where $c$ and $n_0$ are positive constants, then there exists a positive constant $C$ such that as $n\rightarrow \infty$,
\begin{equation}
\p\left(\frac{\widehat t_1}{\widehat t_2}\ge 1+C\left(\frac{1}{n^c}+\frac{\sigma_n}{d_1}\right)\right)\rightarrow 0\,.
\end{equation}

\end{thm}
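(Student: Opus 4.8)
The plan is to prove Theorem~\ref{t2} directly from Weyl's inequality applied to the linearization $\mathcal{Z}=\E\mathcal{Z}+\bbW$, without routing through Proposition~\ref{t1}; indeed the hypotheses here ($d_1\gg\sigma_n$ and $d_1/d_2\le 1+n^{-c}$) do \emph{not} imply the signal condition \eqref{0513.2} underlying Proposition~\ref{t1} and Lemma~\ref{0508-1} (which needs $d_2\gg\sigma_n^{4/3}$), so a more robust argument is appropriate. First I would record the spectral correspondences: since $\rank(\E\bbX)\le 2$, the decomposition \eqref{0506.1} gives $\lambda_1(\E\mathcal{Z})=d_1$ and $\lambda_2(\E\mathcal{Z})=d_2$, while $\lambda_1(\mathcal{Z})=\widehat t_1$ and $\lambda_2(\mathcal{Z})=\widehat t_2$ are the two largest singular values of $\bbX$ (a.s.\ positive once $\min(n,p)\ge 2$, so the ratio is well defined). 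Weyl's inequality then yields $|\widehat t_k-d_k|\le\|\bbW\|$ for $k=1,2$.

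The second ingredient is an operator-norm bound on the noise. Writing $\bbX-\E\bbX=\bSig^{1/2}\bbG$ with $\bbG$ a $p\times n$ matrix of i.i.d.\ standard Gaussians, Gaussian concentration for the largest singular value of $\bbG$ (or Lemma~\ref{bod} in the Supplementary Material), together with the bound on the spectrum of $\bSig$ from Assumption~\ref{a2}, supplies a fixed constant $C_0$ such that $\p(\|\bbW\|>C_0\sigma_n)\to 0$. The point to stress is that $\|\bbW\|\le C_0\sigma_n$ holds on an event of probability $1-o(1)$ for a \emph{single} $C_0$; this is precisely what makes the constant in the conclusion choosable independently of any tolerance level.

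On the event $\{\|\bbW\|\le C_0\sigma_n\}$ I would then chain the two Weyl bounds. From the first, $\widehat t_1\le d_1+C_0\sigma_n=d_1(1+C_0\sigma_n/d_1)$. For the second, $d_2\ge d_1/(1+n^{-c})\ge d_1/2$ for $n\ge n_0$ and $\sigma_n/d_1\to 0$, so for $n$ large $d_2-C_0\sigma_n>0$ and $\widehat t_2\ge d_2-C_0\sigma_n\ge d_2(1-2C_0\sigma_n/d_1)$ (using $\sigma_n/d_2\le(1+n^{-c})\sigma_n/d_1\le 2\sigma_n/d_1$). Dividing and using $(1-x)^{-1}\le 1+2x$ for small $x$ gives $\widehat t_1/\widehat t_2\le(1+n^{-c})(1+C_0\sigma_n/d_1)(1+4C_0\sigma_n/d_1)\le 1+C(n^{-c}+\sigma_n/d_1)$ for a constant $C=C(C_0)$, the cross term $n^{-c}\sigma_n/d_1$ being absorbed since both factors are $o(1)$. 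Combining with the probability bound on $\|\bbW\|$ from the previous step finishes the proof: $\p(\widehat t_1/\widehat t_2\ge 1+C(n^{-c}+\sigma_n/d_1))\le\p(\|\bbW\|>C_0\sigma_n)\to 0$.

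There is no deep obstacle here; the only point requiring care is the uniformity of the constant — one must use a genuine exponential tail bound on $\|\bbW\|/\sigma_n$, not merely its tightness, so that a fixed $C_0$ (hence a fixed $C$) works for all large $n$. If one instead insisted on using Proposition~\ref{t1}, one would additionally have to treat the boundary regime $\sigma_n\ll d_1\lesssim\sigma_n^{4/3}$ where its hypotheses fail, and the natural fallback there is again the Weyl estimate above — so the direct route is both simpler and strictly more general.
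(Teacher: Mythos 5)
Your proof is correct and follows essentially the same route as the paper: both apply Weyl's inequality $|\widehat t_k-d_k|\le\|\bbW\|$ ($k=1,2$) together with the high-probability operator-norm bound $\|\bbW\|\le C_0\max\{n^{1/2},p^{1/2}\}\asymp\sigma_n$ from Lemma~\ref{bod}, then combine the two bounds with the hypotheses $d_1\gg\sigma_n$ and $d_1/d_2\le 1+n^{-c}$ to control the ratio. The paper states this more tersely; your elaboration of the chaining arithmetic, and your correct observation that Proposition~\ref{t1}'s stronger hypothesis $d_2\gg\sigma_n^{4/3}$ is not needed or assumed here, are both consistent with the paper's argument.
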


\begin{thm}\label{t3}
Let $\bbu_0$ be an $n+p$ vector in which the first $n$ entries are $1$'s and the last $p$ entries are $0$'s. In addition to Assumption \ref{a2}, further assume that $d_1\gg \sigma_n$ and $d_1/d_2\ge 1+c$  for some positive constant $c$.  Then for any positive constant $D$, we have
\begin{equation}\label{0605.1}
\p\left(\frac{\widehat t_1}{\widehat t_2}\ge 1+\frac{c}{2}\right)\ge 1-n^{-D}\,,
\end{equation}
for all $n\ge n_0$, where $n_0$ is some constant that only depends on the  constant $D$. Moreover, if the first $n$ entries of $\bbv_1$ are equal, we have for all $n \geq n_0$,
\begin{equation}\label{0605.2}
\p\left(\left|\left(\frac{1}{n}\right)^{\frac{1}{2}}|\bbu_0^\top\widehat\bbv_1|-\left(\frac{1}{2}\right)^{\frac{1}{2}}\right|\le \sqrt{\frac{2\sigma_n}{d_1}}\right)\ge 1-n^{-D}\,.
\end{equation}

\end{thm}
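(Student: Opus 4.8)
The plan is to treat the two displays separately, as they quantify two different phenomena: \eqref{0605.1} is a lower bound on the eigenvalue ratio when the population eigenvalues are genuinely separated, and \eqref{0605.2} controls how close the leading sample eigenvector's ``mean-direction'' projection is to its noiseless value $2^{-1/2}$. For \eqref{0605.1}, the key input is the Weyl-type perturbation bound $|\widehat t_k - d_k| \le \|\bbW\|$ together with Lemma \ref{bod} in the Supplementary Material, which gives $\|\bbW\| = O_p(\sigma_n)$; more precisely, standard Gaussian concentration yields $\p(\|\bbW\| \le C_0 \sigma_n) \ge 1 - n^{-D}$ for a suitable $C_0$ once $n \ge n_0(D)$. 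On this event, $\widehat t_1 \ge d_1 - C_0\sigma_n$ and $\widehat t_2 \le d_2 + C_0\sigma_n$, so $\widehat t_1/\widehat t_2 \ge (d_1 - C_0\sigma_n)/(d_2 + C_0\sigma_n)$. Since $d_1/d_2 \ge 1 + c$ and $d_1 \gg \sigma_n$ (hence also $d_2 \gtrsim d_1/(1+c) \gg \sigma_n$), the right-hand side is $(1+c)(1 - o(1))$, which exceeds $1 + c/2$ for all $n$ past some threshold depending only on $c$ and $D$. This part is routine.

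The substantive part is \eqref{0605.2}. I would start from the eigen-equation $\mathcal{Z}\widehat\bbv_1 = \widehat t_1 \widehat\bbv_1$, written in the resolvent form $(\E\mathcal{Z} + \bbW)\widehat\bbv_1 = \widehat t_1 \widehat\bbv_1$, and expand $\widehat\bbv_1$ in the eigenbasis $\{\bbv_1,\bbv_2,\bbv_{-1},\bbv_{-2}\}$ of $\E\mathcal{Z}$ plus a component in the orthogonal complement (the ``bulk''). The hypothesis $d_1 \gg \sigma_n$ and a Davis--Kahan / $\sin\Theta$ argument show that $\widehat\bbv_1$ is within $O_p(\sigma_n/d_1)$ of the span of $\{\bbv_1,\bbv_2\}$ (or just $\bbv_1$ if one uses the separation $d_1/d_2 \ge 1+c$ from the hypothesis, giving a genuine eigen-gap). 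Writing $\widehat\bbv_1 = \alpha \bbv_1 + (\text{error})$ with $|\alpha| = 1 - O(\sigma_n^2/d_1^2)$ and the error vector of norm $O_p(\sigma_n/d_1)$, and using that $\bbu_0$ is a fixed unit-type vector with $\|\bbu_0\|^2 = n$ so that $n^{-1/2}\bbu_0$ has unit norm, I get
\[
\left|\tfrac{1}{\sqrt n}|\bbu_0^\top\widehat\bbv_1| - \tfrac{1}{\sqrt n}|\bbu_0^\top\bbv_1|\right| \le \tfrac{1}{\sqrt n}\|\bbu_0\|\cdot\|\widehat\bbv_1 - \alpha\bbv_1\| + (1-|\alpha|)\tfrac{1}{\sqrt n}|\bbu_0^\top\bbv_1| = O_p\!\left(\tfrac{\sigma_n}{d_1}\right).
\]
Finally, when the first $n$ entries of $\bbv_1$ are all equal, each equals $\pm(2n)^{-1/2}$ (since those entries are $\bbu_1/\sqrt 2$ with $\bbu_1$ a unit vector of $n$ equal coordinates), so $n^{-1/2}|\bbu_0^\top\bbv_1| = n^{-1/2}\cdot n \cdot (2n)^{-1/2} = 2^{-1/2}$ exactly, and the displayed bound follows once the $O_p(\sigma_n/d_1)$ is upgraded to a high-probability $\sqrt{2\sigma_n/d_1}$ statement. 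That upgrade is where I expect to spend the most effort: I need the eigenvector perturbation not merely in probability but with the explicit constant and an $n^{-D}$ tail, which requires combining the high-probability spectral-norm bound on $\bbW$ with a quantitative resolvent bound (controlling $\|(\widehat t_1 - \E\mathcal Z)^{-1}\|$ restricted to the bulk, which is $\lesssim 1/(d_1 - \sigma_n) \asymp 1/d_1$) and tracking how the $o_p(1)$ fluctuation of $\widehat t_1$ around $t_1$ from Proposition \ref{t1} feeds into the denominator.

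The main obstacle is therefore the last step: converting the soft $O_p(\sigma_n/d_1)$ eigenvector-approximation into the sharp high-probability inequality with the precise radius $\sqrt{2\sigma_n/d_1}$ and tail $n^{-D}$. This needs a careful, non-asymptotic version of the eigenvector perturbation bound — essentially a quantitative $\sin\Theta$ estimate with the gap $d_1 - d_2 \gtrsim d_1$ (available because $d_1/d_2 \ge 1+c$) in the denominator and a high-probability bound on $\bbu_0^\top\bbW(\cdot)$-type linear functionals of the noise — rather than the in-probability statements that suffice for Proposition \ref{t1}. Everything else (the Weyl bound for \eqref{0605.1}, the exact computation $n^{-1/2}|\bbu_0^\top\bbv_1| = 2^{-1/2}$, and the triangle-inequality bookkeeping) is straightforward.
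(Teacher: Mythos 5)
Your overall approach is the same as the paper's: Weyl plus a high-probability spectral-norm bound on $\bbW$ (Lemma \ref{bod}) for \eqref{0605.1}, and a quantitative Davis--Kahan/$\sin\Theta$ bound combined with the exact computation $n^{-1/2}|\bbu_0^\top\bbv_1| = 2^{-1/2}$ for \eqref{0605.2}. You also correctly flag that the work lies in converting the soft $O_p(\sigma_n/d_1)$ eigenvector-approximation into a non-asymptotic tail bound; the paper achieves this by invoking a specific non-asymptotic $\sin\Theta$ inequality (Theorem 10 in the supplement of \cite{cai2013}, in the form $2 - 2(\bbv_1^\top\widehat\bbv_1)^2 \le \|\bbW\|/(d_1 - d_2 - \|\bbW\|)$) together with $d_1 - d_2 \ge \tfrac{c}{1+c}d_1$, and then a triangle/Cauchy--Schwarz argument via the block structure $\bbv_1 = ((2n)^{-1/2}\mathbf{1}_n^\top,\ \mathfrak{u}_1^\top)^\top$, which bypasses the explicit $\widehat\bbv_1 = \alpha\bbv_1 + (\text{error})$ decomposition you sketch.

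There is one genuine error in your argument for \eqref{0605.1}. You write that $d_1/d_2 \ge 1+c$ and $d_1 \gg \sigma_n$ give $d_2 \gtrsim d_1/(1+c) \gg \sigma_n$, but the inequality is in the wrong direction: $d_1/d_2 \ge 1+c$ gives $d_2 \le d_1/(1+c)$, an \emph{upper} bound on $d_2$. In fact $d_2$ can be of order $\sigma_n$ or smaller, and even $d_2 = 0$ is permitted by the model (e.g.\ case (a) of Theorem \ref{thm: population-eg}). In that regime your bound $\widehat t_1/\widehat t_2 \ge (d_1 - C_0\sigma_n)/(d_2 + C_0\sigma_n) = (1+c)(1-o(1))$ does not follow, because $C_0\sigma_n/d_2$ need not be small. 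The fix — which the paper carries out — is a two-case argument: if $d_2 \gtrsim \sigma_n$ (say $d_2 \ge K\sigma_n$ for $K$ large enough depending on $c$), your argument goes through; if $d_2 \lesssim \sigma_n$, bound $\widehat t_2 \le d_2 + \|\bbW\| = O(\sigma_n)$ and $\widehat t_1 \ge d_1 - \|\bbW\| \gtrsim d_1 \gg \sigma_n$, so the ratio is arbitrarily large and certainly exceeds $1 + c/2$. Everything else in your plan, including the recognition that the eigen-gap $d_1 - d_2 \gtrsim d_1$ is what makes $\widehat\bbv_1$ concentrate around $\bbv_1$ itself (not just the two-dimensional signal subspace), is sound.
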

{\color{black} We note that Theorems \ref{t2} and \ref{t3} require $d_1 \gg \sigma_n$, which is weaker than the condition for $d_1$ in  Proposition \ref{t1}. } {\color{black}  By Theorems \ref{t2} and \ref{t3}, we can choose $\tau_n$ and $\delta_n$ for Algorithm \ref{alg1} such that $C(n^{-c}+ \sigma_n/d_1) \le \tau_n\le c/2$ and $\delta_n\ge \sqrt{ 2\sigma_n/d_1}$ . In our simulation, we let $\tau_n=\log^{-1}(n+p)$ and $\delta_n=\log^{-2}(n+p)$. These choices were reasonable   when  $\log^{-4}(n+p)\ge 2\sigma_n/d_1$ for sufficiently large $n$ and $p$.}


We next discuss that when $p \sim n$, the results in Theorems \ref{t2}--\ref{t3} apply as long as clustering is possible. Concretely, note that both these theorems require that $d_1$, a measure of the difficulty in clustering, to satisfy $d_1\gg \sigma_n$, which reduces to $d_1\gg \sqrt{n}$ when $p\sim n$. In the Supplementary Material,  we establish the clustering lower bound  in Theorem \ref{lowerbound} by showing that  if $d_1\ll \sqrt{n}$, then clustering is impossible regardless of what method to use; see the Supplementary Material for specific assumptions. We further prove in Theorem \ref{exactrec} and Corollary \ref{cor12} in the Supplementary Material that when $d_1\ge \sqrt{2(1+\ep_0)n\log n}$ for any positive constant $\ep_0$, a simple clustering method based on the signs of selected eigenvector can perfectly recover the class labels with probability tending to 1 (i.e., exact recovery). Our exact recovery result is similar to Theorem 3.1 of \cite{abbe2017entrywise}, who studied symmetric random matrices with independent entries on and above diagonals and low expected rank. Moreover, in related papers working on different models such as $\mathbb{Z}_2$-synchronization \citep{bandeira2017tightness} and stochastic block model \citep{abbe2017entrywise}, it is shown that when $d_1(\bbA)$ is at least of order  $\sqrt{n\log  n}$, there exists an exact recovery approach to identify the memberships, where $\bbA$ is the data matrix in the respective context.

%

\section{Simulation Studies}\label{simula}

 In this section, we compare our newly proposed eigen-selected spectral clustering (ESSC) with k-means, Spectral Clustering, CHIME, IF-PCA and the oracle classifier (a.k.a, Bayes classifier). Recall that the oracle classifier to distinguish $\bbx|(Y=1)\sim  N(\bmu_1, \bSig)$ from {\color{black}$\bbx|(Y=0)\sim  N(\bmu_2, \bSig)$} is
 \begin{equation}\label{0424.1}g(\bbx)=\begin{cases}
1, & \text{ if } (\bbx-\frac{\bmu_1+\bmu_2}{2})^\top\bSig^{-1}(\bmu_1-\bmu_2)\ge\log(\frac{\pi}{1-\pi})\,,\cr 0, & \text{ if } (\bbx-\frac{\bmu_1+\bmu_2}{2})^\top\bSig^{-1}(\bmu_1-\bmu_2)<\log(\frac{\pi}{1-\pi})\,,\end{cases}
\end{equation}
 where $\pi = \p(Y=1)$. We generate $n$ i.i.d. copies of $\bbx\sim\pi N(\bmu_1,\bSig)+(1-\pi)N(\bmu_2,\bSig)$ with
 $\pi = 0.5$. We have also experimented with $\pi=0.4$ and the results are very similar so omitted. Throughout this section, we set $\bmu_1=r(\bmu_{11}^{\top},\bmu_{12}^{\top})^{\top}$, where $\bmu_{11}$ is an $l$-dimensional vector in which all entries are $1$, $\bmu_{12}$ is a $(p-l)$-dimensional vector in which all entries are $0$, and $r$ is a scaling parameter. Our simulation is based on the following five models.

\begin{itemize}
\item Model 1: $\bmu_2=\mathbf{0}$,  $n=200$, $p\in\{100,200,400,600,800,1000,1200\}$, $l=15$ and $r=2$. The covariance matrix $\bSig=(\sigma_{ij})$ is symmetric with $\bSig_{ij}=0.8^{|i-j|}$.
\item Model 2: $\bmu_2=r(\bmu_{12}^{\top},\bmu_{11}^{\top})^{\top}$, $n=100$,   $p\in\{100,200,400,600,800,1000,1200\}$, $l=12$ and $r=2$. The covariance matrix $\bSig=r^2 \bbI$.
\item Model 3:  $\bmu_2=\bmu_1 / 2$, $n=200$,   $p\in\{100,200,400,600,800,1000,1200\}$,  $l=60$ and $r=1$. The covariance matrix $\bSig=\bbI$.
\item Model 4: the same as Model 3 except for $p\in\{30,50,100,200,400,600,800\}$ and $l=30$.

\item Model 5: $\bmu_2=1/r(\bmu_{21}^{\top},\bmu_{22}^{\top})^{\top}$, where $\bmu_{21}$ is an $(l/2)$-dimensional vector in which all entries are $1$, $\bmu_{22}$ is a $(p-l/2)$-dimensional vector in which all entries are $0$, $l=20, \ p=400$,   $n\in\{200,400,600,800,1000\}$ and $r=1$. The covariance matrix $\bSig=r^2 \bbI$.
\end{itemize}

In Model 1, the covariance matrix $\bSig$ has non-zero off-diagonal entries. In Models 2--4, each non-zero entry of $\bmu_1$ and $\bmu_2$ with magnitude not bigger than $r$ is covered by Gaussian noise with variance $r^2$.
In Models 3--4, $\bmu_1$ is parallel to $\bmu_2$. {\color{black} With Model 5, we investigate how the trend of the misclustering rate changes with $n$.}

For CHIME , we use the Matlab codes uploaded to \verb+Github+ by the authors of \cite{cai2013}. Since CHIME involves an EM algorithm, the initial value is very important. We use the default initial values provided in the Matlab codes. We also need to provide the other initial values of $\bmu_1$, $\bmu_2$, $\beta_0=\bSig^{-1}(\bmu_1-\bmu_2)$ and $\pi$ denoted by $\widehat\bmu_1$, $\widehat\bmu_2$, $\widehat\beta_0$ and $\widehat\pi$ respectively. Specifically, we set $\widehat\bmu_1=\frac{\sum_{1\le i\le n,Y_i=1}\bbx_i}{n_1}$ and {\color{black}$\widehat\bmu_2=\frac{\sum_{1\le i\le n,Y_i=0}\bbx_i}{n_2}$}, $\widehat\beta_0=\bSig^{-1}(\widehat\bmu_1-\widehat\bmu_2)$  and $\widehat\pi=0.4$.  For Spectral Clustering,  there are a lot of variants. In the simulation part,  we  follow \cite{ng2002} with the common non-linear kernel $k(\bbx,\bby)=\exp\{-\frac{\|\bbx-\bby\|_2^2}{2p}\}$ to construct an affinity matrix.  For IF-PCA in \cite{jin2016}, we directly apply the Matlab code provided by the authors without modification.

We repeat 100 times for each model setting and calculate the average misclustering rate and the corresponding standard error in  Tables \ref{tab1.5}-\ref{tab5.5}.

\begin{table}[htbp]
	\centering
	\caption{The misclustering rate of several approaches for Model 1 with $\pi=0.5$}
	\begin{tabular}{c|cccccc}
		\toprule
    p  & \text{ESSC}      & \text{k-means} &\text{Spectral Clustering}      & \text{CHIME} & \text{IF-PCA} &\text{Oracle}  \\
     	\midrule
100     &  .067(.0017) & .069(.0018)&.071(.0017)&.036(.0045)&.14(.0112) &.002(.0009)  \\
200     &  .072(.0017)  & .074(.0019)&.076(.0019)&.071(.0097)&.15(.0131)  &.002(.001)   \\
400     &  .073(.0021) &.079(.0022)  &.081(.0021)&.088(.0125)&.191(.0137)  &.002(.0009) \\
600     &    .078(.002)  & .088(.0022)&.091(.0022)&.067(.0105)&.21(.0146) &.002(.001)   \\
800     &  .078(.0018) &.1(.0055) &.099(.0023)&.036(.0047)&.258(.0157)  &.002(.001) \\
1000     &  .084(.002) &.117(.0063) &.108(.0026)&.024(.0046)&.257(.0149)  &.002(.0009) \\
1200     &  .087(.0022) &.12(.0053) &.117(.003)&.021(.005)&.266(.0147)  &.002(.0009) \\
		\bottomrule
	\end{tabular}\label{tab1.5}
	\label{tab:addlabel}%
\end{table}%

\begin{table}[htbp]
	\centering
	\caption{The misclustering rate of several approaches for Model 2 with $\pi=0.5$}
	\begin{tabular}{c|cccccc}
		\toprule
    p  & \text{ESSC}      & \text{k-means} &\text{Spectral Clustering}      & \text{CHIME} & \text{IF-PCA} &\text{Oracle}   \\
     	\midrule
100     & .012(.0011) & .011(.001)&.083(.013)&.004(.0006)&.224(.0139) & .008(.0008) \\
200      & .023(.0016)  & .024(.004)&.169(.015)&.002(.0004)& .269(.0139)& .007(.0008)   \\
400      & .042(.0029) &.04(.0049)  &.298(.013)&0(0)& .335(.0124) & .009(.0009) \\
600      & .068(.0034) &.089(.0103)  &.352(.0096)&0(0)& .373(.0107) & .007(.0007) \\
800      & .086(.0037)  &.122(.0121) &.386(.0073)&0(0)& .401(.0088) & .006(.0007)  \\
1000    & .117(.0057)  &.211(.0145)  &.386(.0078)&0(0)&.423(.0076) & .008(.001)  \\
1200      & .16(.0084)  &.238(.0142) &.398(.0069)&0(0)& .407 (.0071)& .006(.0009)  \\
		\bottomrule
	\end{tabular}\label{tab2.5}
	\label{tab:addlabel}%
\end{table}%

\begin{table}[htbp]
	\centering
	\caption{The misclustering rate of several approaches for Model 3  with $\pi=0.5$}
	\begin{tabular}{c|cccccc}
		\toprule
	p  & \text{ESSC}      & \text{k-means} &\text{Spectral Clustering}      & \text{CHIME} & \text{IF-PCA} &\text{Oracle}   \\
     	\midrule
100       & .028(.0012) & .037(.0014)&.038(.0014) & .093(.0121)& .203(.0096)& .028(.0012) \\
200       & .028(.0011)  & .047(.0014) & .049(.0013)& .438(.0117)&.285(.0117) & .026(.0012)   \\
400     & .027(.001) &.085(.0075)    &.073(.0023) & .446(.0106)&.366(.0107) & .026(.001) \\
600     & .032(.0014) &.137(.011)    &.1(.0023) & .468(.0049)& .393(.0088)& .025(.0012) \\
800      & .033(.0013)  &.193(.011)   &.134(.0034) &.442(.0109) &.41(.008) & .029(.0012)  \\
1000     & .033(.0015)  &.269(.0127)   & .161(.004)&.457(.0082) &.424(.0066) & .026(.0012)  \\
1200    & .037(.0013)  &.322(.0114)   &.196(.0059) & .365(.0118)& .425(.0071) & .026(.0011)  \\
		\bottomrule
	\end{tabular}\label{tab3.5}
	\label{tab:addlabel}%
\end{table}%
In general, ESSC deteriorates much slower than k-means as $p$ increases and is more stable than k-means. {\color{black}Tables \ref{tab1.5}--\ref{tab2.5} indicate that k-means is comparable to ESSC when $p$ is small, while ESSC works better than k-means  when $p$ is large.
   For Model 3 in Table \ref{tab3.5}, ESSC outperforms k-means.}  Since the number of non-zero coordinates of $\bmu_1$ and $\bmu_2$ in Model 4 is much fewer than that in Model 3,  the signal strength of the means in Model 4 is not strong enough to have large spiked singular values. As such, the performance of ESSC in Table \ref{tab4.5} is worse than that of k-means when $p$ is smaller (e.g., less than $200$). {\color{black} However, since the misclustering rate of ESSC increases slowly as $p$ increases,} when $p$ passes $200$, ESSC competes favorably against k-means. Comparing to Spectral Clustering, ESSC excels in all models {\color{black} for almost all $p$ and $n$}.  Tables \ref{tab1.5}--\ref{tab2.5} indicate that CHIME outperforms the other approaches for Models 1--2. While for Models 3--4, the performance of CHIME is worse than the others. We conjecture that such a phenomenon happens because the differences of $\bmu_1$ and $\bmu_2$ are small and $\bmu_1 - \bmu_2$ has more non-zero coordinates than that in Model 2, which does not cater the sparse assumptions in CHIME very well. Table \ref{tab5.5} for Model 5 indicates how the misclustering rates change as $n$ increases. When $n$ is small,  We also observe that ESSC performs better than other methods.


\begin{table}[htbp]
	\centering
	\caption{The misclustering rate of several approaches for Model 4  with $\pi=0.5$}
	\begin{tabular}{c|cccccc}
		\toprule
	p  & \text{ESSC}      & \text{k-means} &\text{Spectral Clustering}      & \text{CHIME} & \text{IF-PCA} &\text{Oracle}   \\
     	\midrule
30        & .19(.003) & .105(.0023)&.103(.002) &.47(.0024)&.235(.0055)& .087(.0021) \\
   50    & .2(.0033)  & .112(.003) &.111(.0026) &.472(.0021)&.301(.0083)&.088(.0019)   \\
 100      & .21(.003) &.145(.0059)    &.133(.0029)&.474(.002)&.341(.009)& .084(.0018) \\
 200      & .21(.0028) &.24(.0107)    &.182(.0048)&.474(.0022)&.419(.0065)& .086(.0018) \\
 400    & .23(.0031)  &.372(.008)   &.279(.0079)&.471(.0019)&.448(.0041)& .086(.0019)  \\
 600     & .241(.0034)  &.41(.006)   &.348(.0075) &.47(.0023)&.452(.004)&.086(.002)  \\
800      & .255(.0034)  &.419(.0059)   &.349(.0071)&.473(.0021)&.46(.0026)& .088(.002)  \\
		\bottomrule
	\end{tabular}\label{tab4.5}
	\label{tab:addlabel}%
\end{table}%

\begin{table}[htbp]
	\centering
	\caption{The misclustering rate of several approaches for Model 5  with $\pi=0.5$}
	\begin{tabular}{c|cccccc}
		\toprule
	n  & \text{ESSC}      & \text{k-means} &\text{Spectral Clustering}      & \text{CHIME} & \text{IF-PCA} &\text{Oracle}   \\
     	\midrule
 200      & .04(.0015) &.073(.0058)    &.347(.0096)&.079(.0007)&.384(.0108)& .014(.0009) \\
 400    & .033(.0009)  &.042(.0012)   &.191(.0137)&.016(.0006)&.305(.0133)& .015(.0006)  \\
 600     & .03(.0007)  &.036(.0008)   &.062(.0067) &.022(.0007)&.288(.0139)&.013(.0004)  \\
800      & .029(.0007)  &.032(.0007)   &.037(.0021)&.029(.0006)&.291(.0147)& .013(.0004)  \\
1000      & .029(.0005)  &.031(.0005)   &.033(.0008)&.034(.0006)&.28(.0154)& .014(.0004)  \\
		\bottomrule
	\end{tabular}\label{tab5.5}
	\label{tab:addlabel}%
\end{table}%

%
%
%
%
%
%
%
%
%
%
%
%
%
%
%

\section{Real data analysis}\label{sec::real data}
In this section, we run several real data sets in finance and biomedical diagnosis to compare the newly proposed ESSC with the other clustering approaches.
\subsubsection{Financial data}
{\color{black}We  consider a credit card dataset in \cite{dataset}. This dataset contains transactions made by credit cards in September $2013$ by European cardholders.
Each instance in the data contains $30$ features and the data has labeled $492$ frauds out of $284,807$ transactions. Among these features, $28$ are engineered features obtained from some original features (which are not revealed for privacy concerns), while the other two features are `Time' and `Amount'. We only use the $28$ engineered features to do clustering. Clearly, the data set is highly imbalanced: the fraud transactions account for $0.172\%$ of all transactions. We choose the first $50$ fraud transactions and  the first $5r$ normal transactions, where $r\in \{10,11,\ldots,50\}$. Note that for $r=10$, the fraud and normal groups are balanced in size, and for $r = 50$, normal transactions are 5 times as many as the fraud ones.  On these data sets, we compare ESSC with IF-PCA and two other spectral clustering methods. The first spectral method (SC1) directly applies k-means to the first $n$ rows of $(\widehat \bbv_1,\widehat \bbv_2)$ and the second method (SC2) is the one that uses a non-linear kernel as described in the simulation section. We do not report the performance of CHIME in real data analysis, as initializations on parameters such as $\Sigma$ are not communicated in the original paper and unlike simulation, there is no obvious initialization choice for real data studies. Figure \ref{f6} demonstrates that ESSC is the preferred approach for all $r$'s (i.e., imbalanced ratios), demonstrating the efficiency and stability of ESSC on this financial data set.}
\begin{figure}\label{f6}
\centering
\textbf{Misclusering rate of Credit card data}
		\includegraphics[scale=0.70]{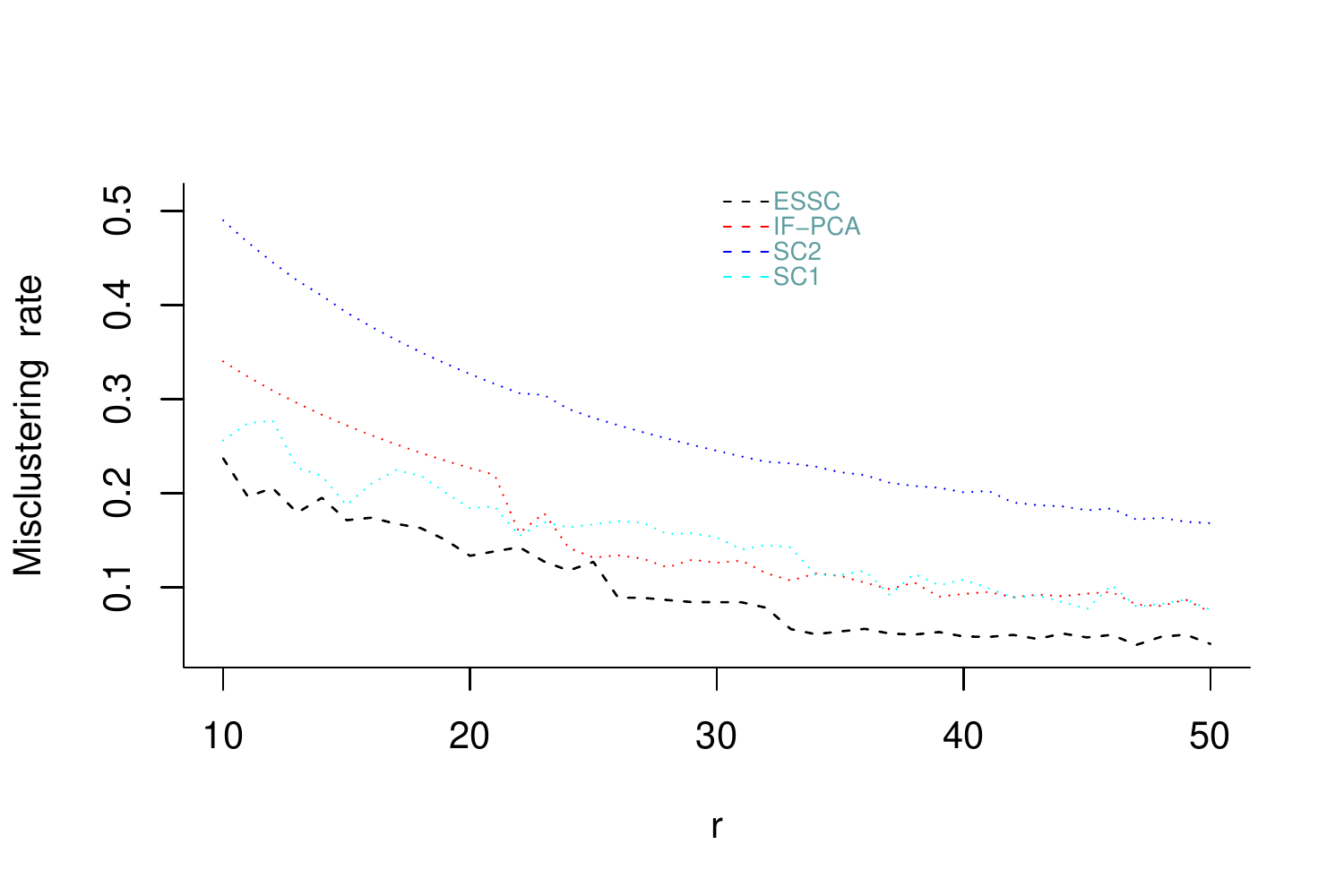}
\caption{Misclustering rate of the Credit card data vs. different sample sizes  $n=5(r+10)$. The red curve represents IF-PCA, the cyan curve represents SC1, the blue curve represents SC2 and the black curve represents ESSC.  }\label{f6}
\end{figure}

\subsubsection{Biological data}
{\color{black} We use several gene microarray data sets collected and processed by authors in \cite{jin2016}.  These data sets are canonical datasets analyzed in the literature such as in \cite{dettling2004}, \cite{gordon2002} and \cite{yousefi2009}.  We use a processed version at www.stat.cmu.edu/~jiashun/Research/software/GenomicsData.  We apply the four approaches mentioned in the financial data section. All the datasets considered in this section belong to the ultra-high-dimensional settings. In each dataset,  the number of features  is about two orders of magnitude larger than the sample size; see Table \ref{tabinf} for a summary. In supervised learning, when feature dimensionality and sample size have such a relation, some independence screening procedure is usually beneficial before implementing methods from joint modeling. We will adopt a similar two-step pipeline for clustering.    As IF-PCA involves an independence screening step via normalized KS-statistic ((1.7) of \cite{jin2016}), we also implement this screening step before calling other methods.  Concretely on each dataset, for each $p\in\{150, 151, 152, \ldots, 300\}$, we keep the $p$ features that have the largest $p$ normalized KS-statistic and construct a $p\times n$ matrix $\bbX$.  Then, since the dimension reduction step is done, for IF-PCA we only apply the ``PCA-2'' step in \cite{jin2016}. Moreover, we subsample each dataset so that the resulting datasets all have an average size of $60$. Concretely, when a dataset has $n$ instances, we keep each instance with a probability $60/n$. For each dataset, we repeat the subsampling procedure $10$ times and report the average misclustering rates of the clustering methods on the subsamples.

\begin{table}[htbp!]
	\centering
	\caption{Sample size and dimensionality of real data sets }
	\begin{tabular}{c|c|c}
		\toprule
	  \text{Data Name}	 & \text{Sample size} & \text{Total number of features}   \\
		\midrule
Colon Cancer        &62&2000\\
Breast Cancer     	  & 276 &22215 \\
Lung Cancer 1       &203&12600\\
Lung Cancer 2     &181&12533\\
Leukemia       &72&3571\\
		\bottomrule
	\end{tabular}\label{tabinf}
\end{table}%

\begin{figure}[hbtp!]
	\centering
\textbf{Misclusering rate of Colon Cancer data}
		\includegraphics[scale=0.42]{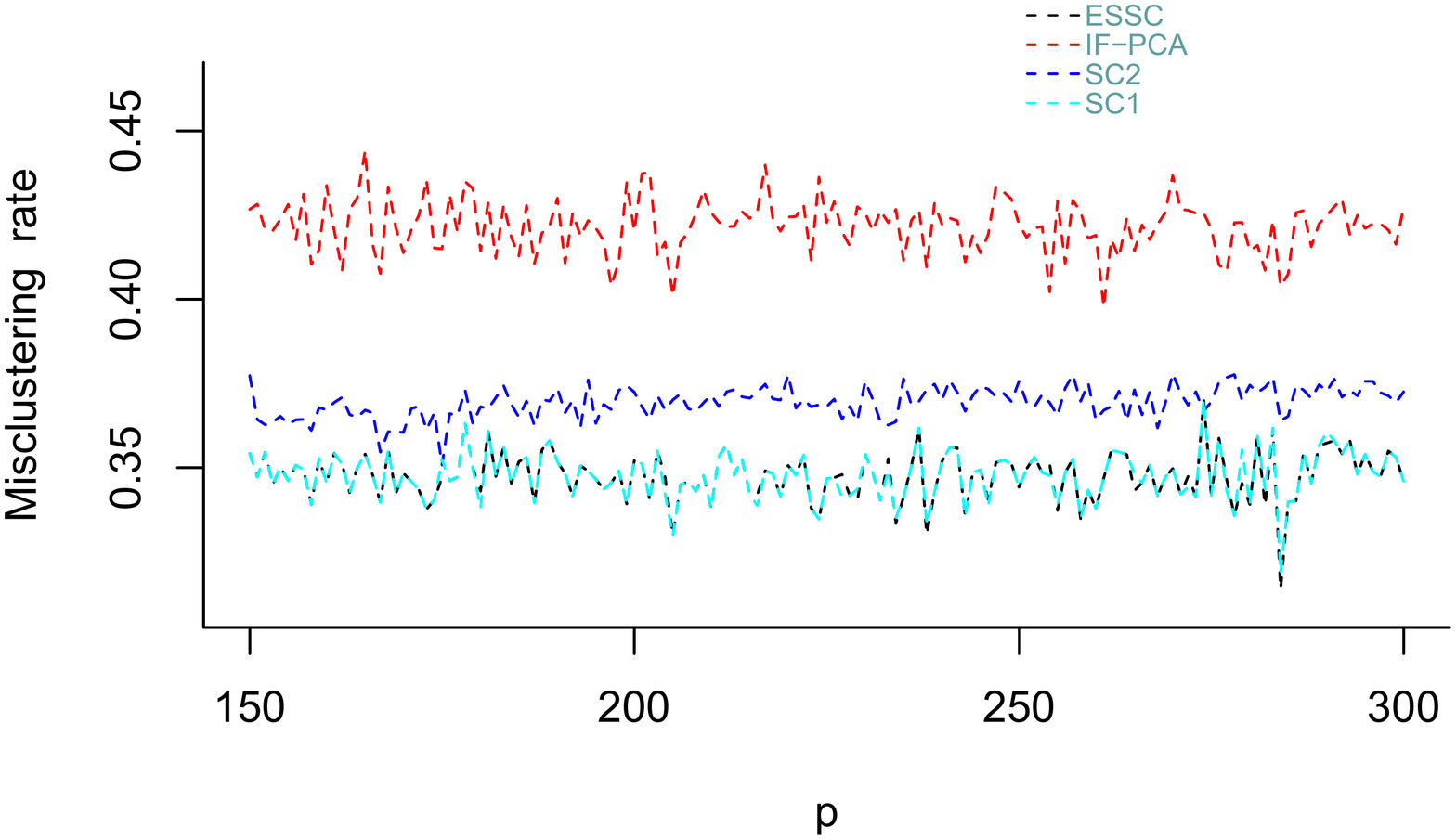}
\caption{Misclustering rate of the Colon Cancer data vs.  different feature dimension  $p$. The red curve represents IF-PCA, the cyan curve represents SC1, the blue curve represents SC2, and the black curve represents ESSC.  }\label{f1}
\end{figure}
\begin{figure}[hbtp!]
\centering
\textbf{Misclusering rate of Breast Cancer data}
		\includegraphics[scale=0.42]{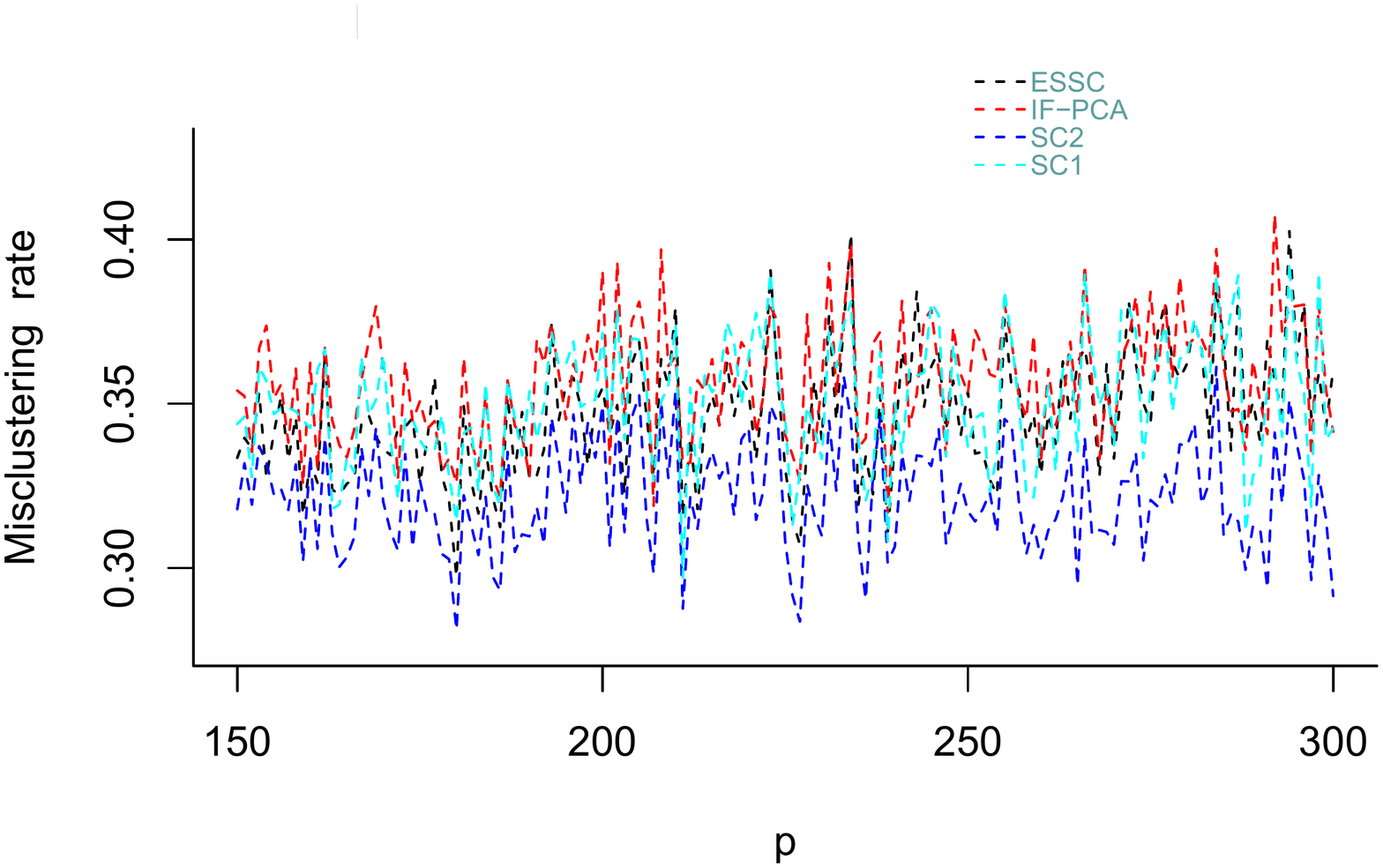}
\caption{Misclustering rate of the Breast Cancer  data vs. different feature dimension  $p$. The red curve represents IF-PCA, the cyan curve represents SC1, the blue curve represents SC2 and the black curve represents ESSC.  }\label{f2}
\end{figure}
\begin{figure}[hbtp!]
\centering
\textbf{Misclusering rate of Lung Cancer 1 data}
		\includegraphics[scale=0.3]{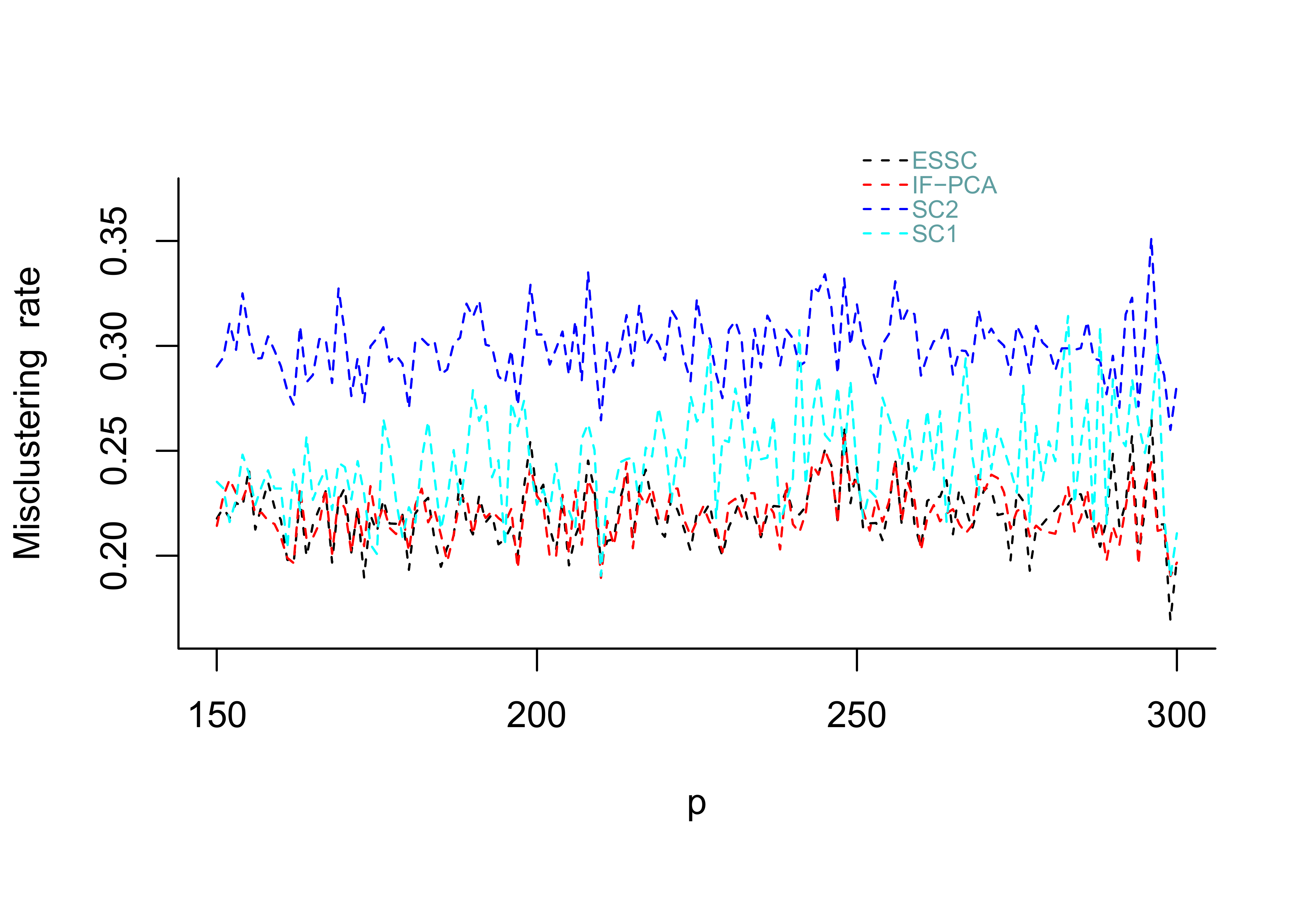}
\caption{Misclustering rate of Lung Cancer 1 data vs. different feature dimension  $p$. The red curve represents IF-PCA, the cyan curve represents SC1, the blue curve represents SC2 and the black curve represents ESSC.  }\label{f3}
\end{figure}
\begin{figure}[hbtp!]
\centering
\textbf{Misclusering rate of Lung Cancer 2 data}
		\includegraphics[scale=0.42]{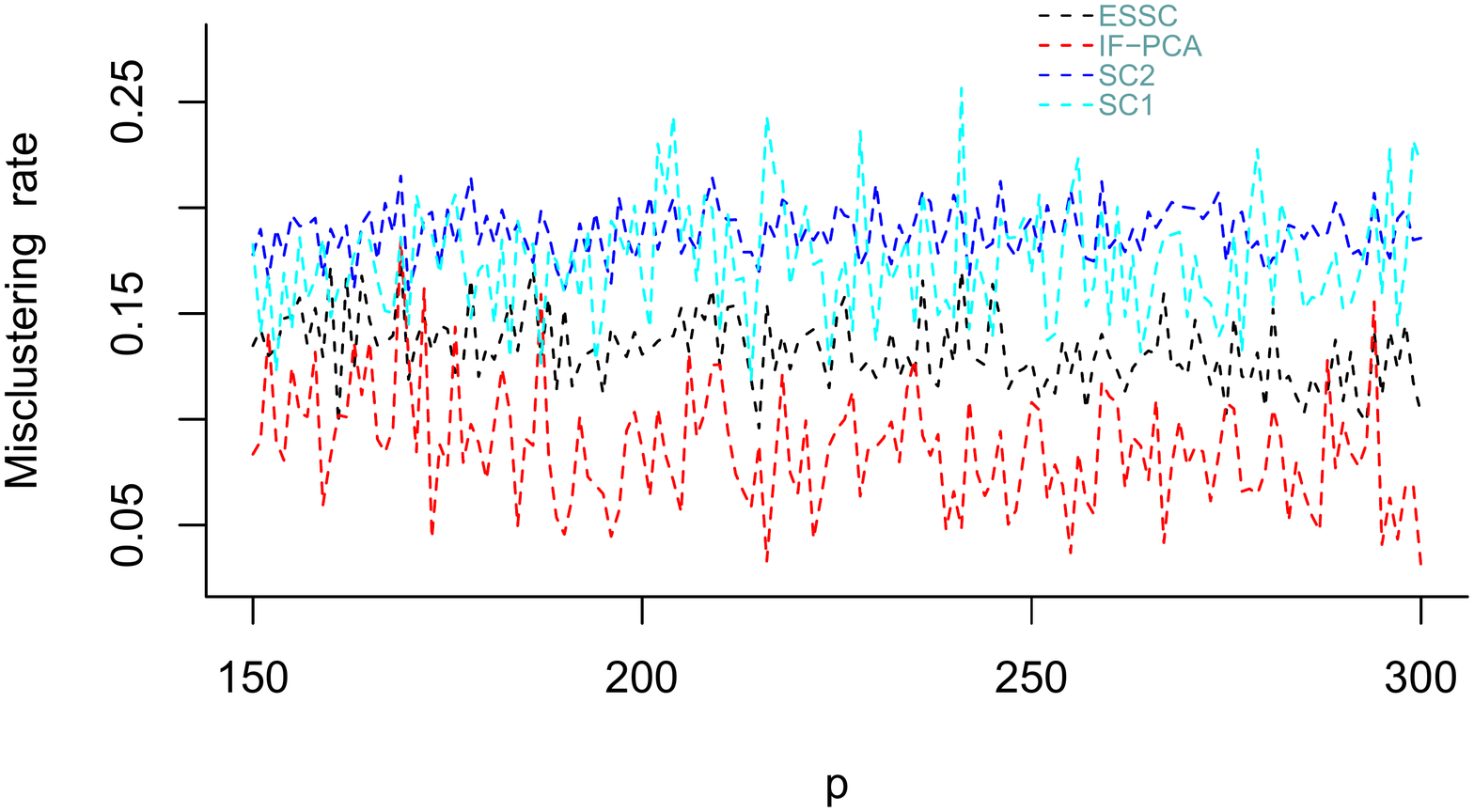}
\caption{Misclustering rate of Lung Cancer 2 data vs. different feature dimension $p$. The red curve represents IF-PCA, the cyan curve represents SC1, the blue curve represents SC2 and the black curve represents ESSC.  }\label{f4}
\end{figure}

\begin{figure}[hbtp!]\label{f5}
\centering
\textbf{Misclusering rate of Leukemia data}
		\includegraphics[scale=0.42]{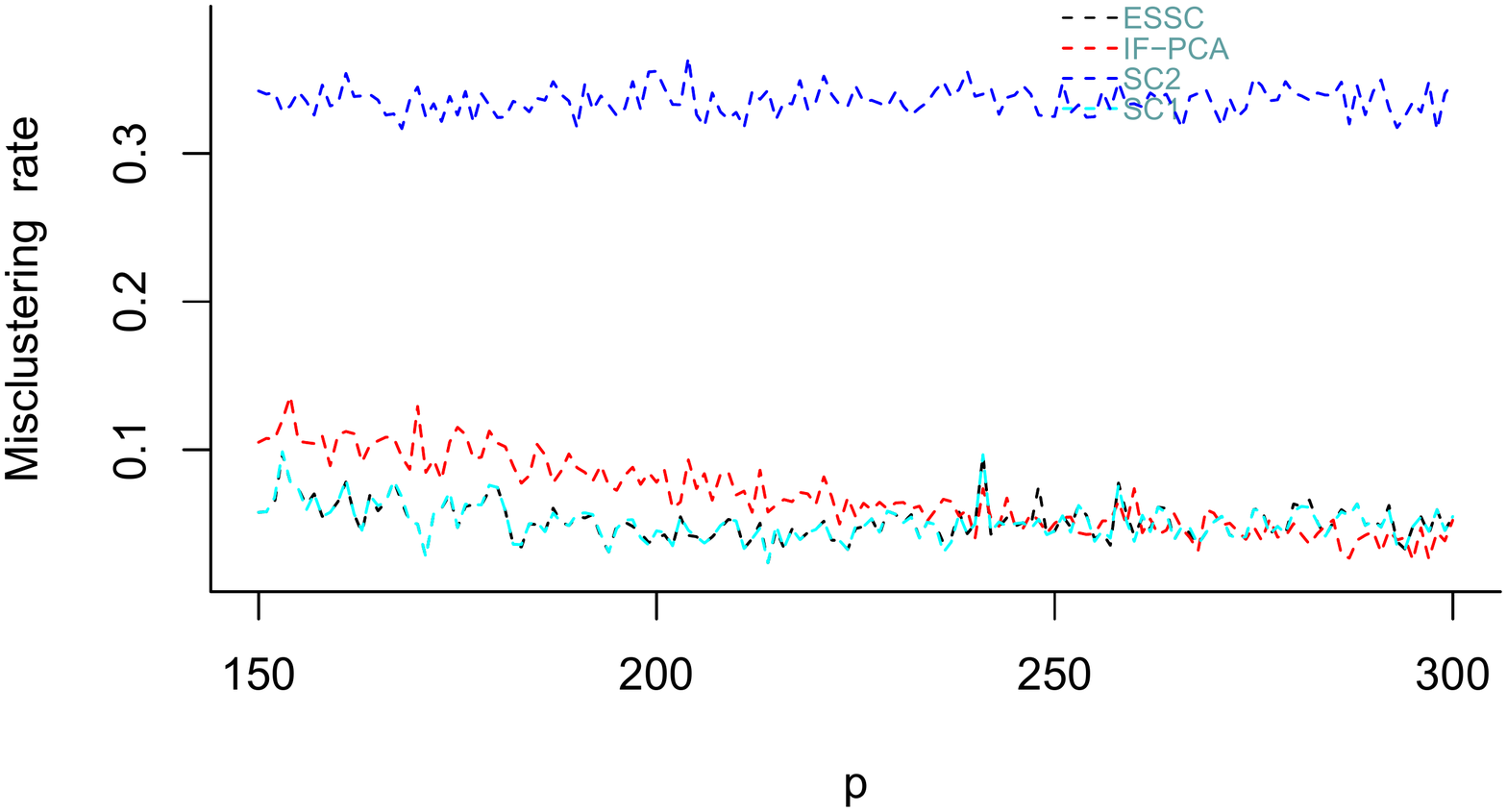}
\caption{Misclustering rate of the Leukemia data vs. different feature dimension  $p$. The red curve represents IF-PCA, the cyan curve represents SC1, the blue curve represents SC2 and the black curve represents ESSC.  }\label{f5}
\end{figure}

From Figures \ref{f1}-\ref{f5}, we compare the methods as follows.   ESSC and SC1 work better than IF-PCA for the  \verb+Colon Cancer+ and  \verb+Leukemia+ data.   For \verb+Lung Cancer 1+ data, ESSC has a similar misclustering rate with IF-PCA in general and outperforms the other two approaches.  For \verb+Breast Cancer+ data, SC2 outperforms the other approaches,  SC1 works a little better than IF-PCA, and ESSC has similar performance with SC1.  For \verb+Lung Cancer 2+ data, IF-PCA has the best performance and ESSC is the second best.  Overall, ESSC belongs to the top two across all five datasets, demonstrating its efficiency and stability.

\section{Discussion}
In this work, with a two-component Gaussian mixture  type model, we propose a theory-backed eigen selection procedure for spectral clustering. The rationale behind the selection procedure is generalizable to more than two components in the mixture. We refer interested readers to  Supplementary Material  for further discussion. Moreover, for future work, it would be interesting to study how an eigen selection procedure might help spectral clustering when a non-linear kernel is used to create an affinity matrix.

\newpage

\renewcommand\thesection{S}
\renewcommand\thefigure{S\arabic{figure}}
\renewcommand\thetable{S\arabic{table}}
\renewcommand{\theequation}{S.\arabic{equation}}
\setcounter{equation}{0}
\setcounter{thm}{3}
\title{\textbf{Supplementary material  to ``Eigen selection in spectral clustering:  a theory guided practice''}}
\subsection{Proof of Theorem \ref{thm: population-eg}}

We use $\bbu=(\bbu(1),\ldots,\bbu(n))^{\top}$ to denote either $\bbu_1$ or $\bbu_2$ and $d^2$ to denote its corresponding eigenvalue, unless specified otherwise.
	
	Because $\bba_1$ only takes two values, by \eqref{0410.1},  there are at most two values of $\bbu(i)$, $i=1,\ldots,n$. We denote these values  by
	$v_1$ and $v_2$.
By (\ref{0409.1}) and (\ref{0409.2}), the number of $v_1$'s in $\bbu$ is either $n_1$ or $n_2$. Without loss of generality, we assume the number of $v_1$'s in $\bbu$ is $n_1$ and the number of $v_2$'s in $\bbu$ is $n_2$.

 Then it follows from (\ref{0409.1}) and  (\ref{0409.2}) that
	\begin{equation}\label{0122.2}
	n_1c_{11}v_1+n_2c_{12}v_2=d^2v_1\,, \text{ and }\quad  n_1c_{12}v_1+n_2c_{22}v_2=d^2v_2\,.
	\end{equation}
	These equations are equivalent to
	\begin{equation}\label{0122.5}
	(d^2-n_1c_{11})v_1=n_2c_{12}v_2\,,
	\end{equation}
	\begin{equation}\label{0122.6}
	n_1c_{12}v_1=(d^2-n_2c_{22})v_2\,.
	\end{equation}

In view of  \eqref{0122.5} and \eqref{0122.6}, we have both $d_1^2$ and $d_2^2$ solve the equation
				\begin{equation}\label{0122.6h}
		(d^2-n_2c_{22})(d^2-n_1c_{11})=n_1n_2c_{12}^2\,.
		\end{equation}
Then \eqref{0122.7h} and \eqref{0122.7ht} follows from \eqref{0122.6h} directly.
 Now let us prove  (a)-(d) of Theorem \ref{thm: population-eg} one by one.
\begin{enumerate}[(a)]
\item When $c_{12}^2 = c_{11}c_{22}$, by \eqref{0122.7h} and \eqref{0122.7ht}  we have $d_1^2 = n_1c_{11}+n_2c_{22}$ and $d_2^2 = 0$. Then $\bbu_2$ does not have clustering power.  Substituting $d_1^2 = n_1c_{11}+n_2c_{22}$ into \eqref{0122.5} and \eqref{0122.6}, we obtain that $\bbu_1 \propto \bone$ if and only if $c_{11}=c_{12}=c_{22}$, which is equivalent to $\mu_1=\mu_2$. This is a contradiction to the condition that $\mu_1\neq \mu_2$ in this paper. Therefore $\bbu_1$ has clustering power.

\item  When $c_{12}=0$, $c_{12}^2 \neq c_{11}c_{22}$ and $n_1c_{11}=n_2c_{22}$, by \eqref{0122.7h} and \eqref{0122.7ht} we conclude that $d_1^2 = d_2^2 = n_1c_{11}$. Since $\bbu_1^\top\bbu_2=0$, it is easy to see that at least one of $\bbu_1$ and $\bbu_2$ has clustering power.

\item When $c_{12}=0$, $c_{12}^2 \neq c_{11}c_{22}$ and $n_1c_{11}\neq n_2c_{22}$, then it follows from  \eqref{0122.7h} and \eqref{0122.7ht} that $d_1^2  = \max\{n_1c_{11},n_2c_{22}\}$ and $d_2^2=\min\{n_1c_{11},n_2c_{22}\}$. Moreover, by $0=c_{12}^2 \neq c_{11}c_{22}$ we have $c_{11},c_{22}>0$, which implies that $d_2^2>0$. Combining these with \eqref{0122.5} and \eqref{0122.6}, we have both $\bbu_1$ and $\bbu_2$ have clustering power. Moreover, both $\bbu_1$ and $\bbu_2$ contain zero entries in view of \eqref{0122.2}.

\item When $c_{12}\neq 0$ and $c_{12}^2 \neq c_{11}c_{22}$. By \eqref{0122.7h} and \eqref{0122.7ht} we have $d_1^2,d_2^2\neq n_1c_{11}\neq 0$, by \eqref{0122.5} we have
		\begin{equation}\label{0122.8}
		v_1=\frac{n_2c_{12}}{d^2-n_1c_{11}}v_2\,.
		\end{equation}
	Therefore if $n_2c_{12}/(d^2-n_1c_{11})\neq 1$, the corresponding eigenvector $\bbu$ has clustering power.  Moreover, in case (d),   $n_2c_{12}/(d^2-n_1c_{11})= 1$ is equivalent to $d^2 = n_1c_{11}+n_2c_{12} = n_1c_{12} + n_2c_{22}$ by \eqref{0122.5} and \eqref{0122.6}. Moreover, the corresponding eigenvector $\bu$ has all entries equal to the same value and thus has no clustering power.
		Since $\bbu_1$ and $\bbu_2$ are orthogonal, when $n_1c_{11}+n_2c_{12} = n_1c_{12} + n_2c_{22}$, exactly one of $\bbu_1$ and $\bbu_2$ has clustering power. If $n_1c_{11}+n_2c_{12} \neq n_1c_{12} + n_2c_{22}$, then $n_2c_{12}/(d_1^2-n_1c_{11})\neq 1$ and $n_2c_{12}/(d_2^2-n_1c_{11})\neq 1$ and thus both $\bbu_1$ and $\bbu_2$ have clustering power.
\end{enumerate}

\subsection{Proof of Proposition \ref{t1}}\label{secta}
The main idea for proving Proposition \ref{t1} is to carefully construct a matrix whose eigenvalue is $\widehat t_k - t_1$, then using similar idea for proving Lemma \ref{0508-1} {\color{black}by analysing the resolvent entries of the matrices such as $(\bbW-z\bbI)^{-1}$}, we can get the desired asymptotic expansions.

By the conditions in Proposition \ref{t1}, 
 for sufficiently large $n$, there exists some positive constant $L$ such that
\begin{equation}\label{0521.1}
\frac{\sigma_n^{L}}{d_1^L}<\frac{1}{2d_1^4}\,,
\end{equation} and in the sequel we fix this $L$. 
 Indeed, $\frac{\sigma_n^L}{d_1^{3L/4}}\ll 1$ and therefore \eqref{0521.1} holds for $L=16$. 

{\color{black}
Assumption (\ref{0513.2}) implies that

\begin{equation}\label{0513.3}
\frac{d_1}{d_2}=1+o(1).
\end{equation}

 It follows from $d_2\gg \sigma_n$  and (\ref{0513.3}) that
\begin{equation}\label{0513.4}
\frac{a_n}{d_2}=1+o(1)\  \text{and}\
\frac{b_n}{d_1}=1+o(1)\,.
\end{equation}
Moreover, it follows from \eqref{0513.3} and Assumption \ref{a2} that
\begin{equation}\label{0513.5}
\frac{\sigma_n}{a_n}\le \frac{1}{2n^{\ep}},\ \text{for some positive constant $\ep$}\,.
\end{equation}}

Throughout the proof, \eqref{0513.5} will be applied in every $O_p(\cdot)$, $o_p(\cdot)$, $O(\cdot)$ and $o(\cdot)$ terms without explicit quotation. We define a Green function of $\bbW$ (defined in \eqref{0506.2}) by
\begin{equation} \label{neweq001}
\bbG(z) = (\bbW-z\bbI)^{-1}, \ z \in \mathbb{C}, \ |z|>\|\bbW\|\,.
\end{equation}

  By  Weyl's inequality,  we have $|\widehat t_k-d_k|\le \|\bbW\|$, $k=1,2$. Thus, {\color{black}by  \eqref{0513.4} and  Lemma \ref{bod}, with probability tending to $1$,}
  \begin{equation}\label{0518.1}
  \min\{\widehat t_2,a_n\}\gg \|\bbW\|\,.
    \end{equation}

Therefore, $\bbG(z)$, $z\in [a_n,b_n]$, $\bbG(\widehat t_1)$ and  $\bbG(\widehat t_2)$ are well defined and nonsingular with probability tending to $1$. Since we only need to show the conclusions of Proposition \ref{t1} hold with probability tending to $1$, in the sequel of this proof, we will assume the existence and nonsingularity of $\bbG(\widehat t_k)$.

By the decomposition of $\E \mathcal{Z}$ in (\ref{0506.1})  and definition of $\bbW$ in  (\ref{0506.2}), we have $\mathcal{Z}=\bbV\bbD\bbV^\top-\bbV_{-}\bbD\bbV^\top_{-}+\bbW$. Then it can be calculated that
\begin{align*}
0 &= \det\left(\mathcal{Z} - \widehat t_k \bbI\right) \\
 &=  \det\left(\bbW-\widehat t_k\bbI+\bbV\bbD\bbV^\top-\bbV_{-}\bbD\bbV^\top_{-}\right) \\
 &=\det\left(\bbG^{-1}(\widehat t_k)+(\bbV\bbD\bbV^\top-\bbV_{-}\bbD\bbV^\top_{-})\right)\non
&=\det\left(\bbG^{-1}(\widehat t_k)\right) \det\left(\bbI+ \bbG(\widehat t_k)(\bbV\bbD\bbV^\top-\bbV_{-}\bbD\bbV^\top_{-})\right)\,, \ k=1,2\,.
\end{align*}
Since $\bbG(\widehat t_k)$ is a nonsingular matrix, $\det[\bbG^{-1}(\widehat t_k)]\neq 0$, which leads to
$$\det\left(\bbI+ \bbG(\widehat t_k)(\bbV\bbD\bbV^\top-\bbV_{-}\bbD\bbV^\top_{-})\right)=0.$$ Notice that
 $(\bbV\bbD\bbV^\top-\bbV_{-}\bbD\bbV^\top_{-})=(\bbV, \bbV_{-})\left(
  \begin{array}{cc}
  \bbD& 0 \\
  0 & -\bbD\\
  \end{array}
\right)(\bbV, \bbV_{-})^\top$. Combining this with the identity $\det(\bbI+\bbA\bbB)=\det(\bbI+\bbB\bbA)$ for any  matrices $\bbA$ and $\bbB$, we have
\begin{equation*} 
0 = \det[\bbI+ \bbG(\widehat t_k)(\bbV\bbD\bbV^\top-\bbV_{-}\bbD\bbV^\top_{-})] = \det\left[\bbI+ \left(
  \begin{array}{cc}
  \bbD& 0 \\
  0 & -\bbD\\
  \end{array}
\right)(\bbV, -\bbV_{-})^\top\bbG(\widehat t_k)(\bbV, -\bbV_{-})\right].
\end{equation*}
 Since $\bbD>0$, it follows from the equation above that
\begin{eqnarray}\label{0314.5h}
\det\left[\left(
  \begin{array}{cc}
  \bbD^{-1}& 0 \\
  0 & -\bbD^{-1}\\
  \end{array}
\right)+ (\bbV, -\bbV_{-})^\top\bbG(\widehat t_k)(\bbV, -\bbV_{-})\right]= 0\,,\text{ } \text{ for } k = 1, 2\,.
\end{eqnarray}

 To analyze \eqref{0314.5h},  we prove some properties of $\bbG(z)$ and the related expressions.
 First of all, by Lemma \ref{0508-1}, we have
\begin{equation}\label{0212.3}
t_k-d_k=O\left(\frac{\sigma_n^2}{a_n}\right), \ k=1,2\,.
\end{equation}
Therefore the distance of $t_k$ and $d_k$ is well controlled and will be used later in this proof.
 Now we turn to analyse $\widehat t_k$, $k=1,2$.  By \eqref{0518.1}, we have
\begin{equation}\label{exp1}
\bbG(z)=(\bbW-z\bbI)^{-1}=-\sum_{i=0}^{\infty}\frac{\bbW^i}{z^{i+1}}\,,
 \end{equation}
 and
\begin{equation}\label{exp2}
\bbG'(z)=-(\bbW-z\bbI)^{-2}=\sum_{i=0}^{\infty}\frac{(i+1)\bbW^i}{z^{i+2}}, \ z\in [a_n,b_n]\,.
  \end{equation}
   By \eqref{0521.1}, (\ref{exp1}), (\ref{exp2}), Lemmas \ref{mos} and \ref{bod}, for any $z\in [a_n,b_n]$ we have

	\begin{align}\label{1011.3h1}
	\bbM_1^\top\bbG(z)\bbM_2&=\bbM_1^\top(\bbW-z\bbI)^{-1}\bbM_2=-\sum_{i=0}^{\infty}\frac{1}{z^{i+1}}\bbM_1^\top\bbW^i\bbM_2\non
	&=\mathcal{R}(\bbM_1,\bbM_2,z)-z^{-2}\bbM_1^\top\bbW\bbM_2-\sum_{i=2}^{L}\frac{1}{z^{i+1}}\bbM_1^\top(\bbW^i-\E\bbW^i)\bbM_2+\tilde \Delta_{n1}\non
&=\mathcal{R}(\bbM_1,\bbM_2,z)-z^{-2}\bbM_1^\top\bbW\bbM_2+\Delta_{n1}\,,
	\end{align}
and
	\begin{align}\label{1011.3h}
	\bbM_1^\top\bbG'(z)\bbM_2&=\bbM_1^\top(\bbW-z\bbI)^{-2}\bbM_2=\sum_{i=0}^{\infty}\frac{i+1}{z^{i+2}}\bbM_1^\top\bbW^i\bbM_2\non
	&=\mathcal{R}'(\bbM_1,\bbM_2,z)+2z^{-3}\bbM_1^\top\bbW\bbM_2+\sum_{i=2}^{L}\frac{i+1}{z^{i+2}}\bbM_1^\top(\bbW^i-\E\bbW^i)\bbM_2+\tilde \Delta_n\non
&=\mathcal{R}'(\bbM_1,\bbM_2,z)+2z^{-3}\bbM_1^\top\bbW\bbM_2+\Delta_n\,,
	\end{align}
where $\|\Delta_{n1}\|=O_p(\frac{\sigma_n}{a_n^3})$, $\|\tilde \Delta_{n1}\|=O_p(\frac{1}{a_n^3})$, $\|\Delta_n\|=O_p(\frac{\sigma_n}{a_n^4})$ and $\|\tilde \Delta_n\|=O_p(\frac{1}{a_n^4})$.
Notice that
$$\mathcal{R}'(\bbM_1,\bbM_2,z)=\frac{\bbM_1^\top\bbM_2}{z^2}+\frac{\bbM_1^\top\E\bbW^2\bbM_2}{z^4}+\sum_{i=3}^{L}\frac{i+1}{z^{i+2}}\bbx^\top\E\bbW^i\bby\,.$$
	It follows from Lemma \ref{mos} and \eqref{0214.2} that for all $z\in [a_n,b_n]$
	\begin{align}\label{1011.4hh}
  \left\|\mathcal{R}(\bbM_1,\bbM_2,z)+z^{-1}\bbM_1^\top\bbM_2\right\|=O(\sigma_n^2/a_n^3)\,,
	\end{align}
and
	\begin{align}\label{1011.4h}
 \left\|\mathcal{R}'(\bbM_1,\bbM_2,z)-z^{-2}\bbM_1^\top\bbM_2\right\|=O(\sigma_n^2/a_n^4)\,.
	\end{align}
	By (\ref{1011.3h1}) and Lemma \ref{mos}, we can conclude that for all $z\in [a_n,b_n]$
	\begin{align}\label{1011.1hh}
	\left\|\bbV^\top\bbG(z)\bbV_{-}\right\|=a_n^{-2}O_p(1)+a_n^{-3}O_p(\sigma^2_n)\,,
	\end{align}
and
	\begin{align}\label{1011.1h}
	\left\|\bbM_1^\top\bbG(z)\bbM_2-\mathcal{R}(\bbM_1,\bbM_2,z)\right\|=\left\|z^{-2}\bbM_1^\top\bbW\bbM_2\right\|+O_p\left(\frac{\sigma_n}{a_n^3}\right)=O_p\left(\frac{1}{a_n^2}\right)\,.
	\end{align}
By  (\ref{1011.4hh}) and (\ref{1011.1h}), we have
	\begin{align}\label{1010.3hh}
	\Big\| & \left(-\bbD^{-1}+\bbV_{-}^\top\bbG(z)\bbV_{-}\right)^{-1}-\left(-\bbD+\mathcal{R}(\bbV_{-},\bbV_{-},z)\right)^{-1}\Big\|\non
	&\le \left\|\bbV_{-}^\top\bbG(z)\bbV_{-}-\mathcal{R}(\bbV_{-},\bbV_{-},z)\right\| \left\|\left(-\bbD^{-1}+\bbV_{-}^\top\bbG(z)\bbV_{-}\right)^{-1}\right\|\left\|\left(-\bbD+\mathcal{R}(\bbV_{-},\bbV_{-},z)\right)^{-1}\right\|\non
	&=O_p(1), \ z\in [a_n,b_n]\,.
	\end{align}
	Moreover, by (\ref{1011.4hh}), (\ref{1011.4h}) and (\ref{1011.1h}) we have
	\begin{align}\label{1010.3h}
	\Big\| & \left[\left(-\bbD^{-1}+\bbV_{-}^\top\bbG(z)\bbV_{-}\right)^{-1}-\left(-\bbD+\mathcal{R}(\bbV_{-},\bbV_{-},z)\right)^{-1}\right]'\Big\|\\
	&=\Big\|\left(-\bbD^{-1}+\bbV_{-}^\top\bbG(z)\bbV_{-}\right)^{-1}\bbV_{-}^\top\bbG'(z)\bbV_{-} \left(-\bbD^{-1}+\bbV_{-}^\top\bbG(z)\bbV_{-}\right)^{-1}\non
	&-\left(-\bbD+\mathcal{R}(\bbV_{-},\bbV_{-},z)\right)^{-1}\mathcal{R}'(\bbV_{-},\bbV_{-},z)\left(-\bbD+\mathcal{R}(\bbV_{-},\bbV_{-},z)\right)^{-1}\Big\|\non
	&=O\left\{\left\|\bbV_{-}^\top\bbG'(z)\bbV_{-}-\mathcal{R}'(\bbV_{-},\bbV_{-},z)\right\| \left\|\left(-\bbD^{-1}+\bbV_{-}^\top\bbG(z)\bbV_{-}\right)^{-1}\right\|^2\right\}\non
	&+O\Big\{\left\|\left[-\bbD^{-1}+\bbV_{-}^\top\bbG(z)\bbV_{-}\right]^{-1}-\left(-\bbD+\mathcal{R}(\bbV_{-},\bbV_{-},z)\right)^{-1}\right\| \nonumber \\
	&
	\quad \cdot \left(\left\|\left(-\bbD^{-1}+\bbV_{-}^\top\bbG(z)\bbV_{-}\right)^{-1}\right\|+\left\|\left(-\bbD^{-1}+\bbV_{-}^\top\bbG(z)\bbV_{-}\right)^{-1}\right\|\right)\|\mathcal{R}'(\bbV_{-},\bbV_{-},z)\|\Big\}\non
	&=O_p\left(\frac{1}{a_n}\right)+O_p\left(\frac{\sigma_n}{a_n^2}\right)\,,\nonumber
	\end{align}
	and
	\begin{align}\label{1010.6h}
	\Big\| & \left\{\left(-\bbD+\mathcal{R}(\bbV_{-},\bbV_{-},z)\right)^{-1}\right\}'\Big\| \\
	&=\left\|\left(-\bbD+\mathcal{R}(\bbV_{-},\bbV_{-},z)\right)^{-1}\mathcal{R}'(\bbV_{-},\bbV_{-},z)\left(-\bbD+\mathcal{R}(\bbV_{-},\bbV_{-},z)\right)^{-1}\right\| \nonumber\\
	&=O(1), \ z\in [a_n,b_n]\,.\nonumber
	\end{align}
	
	By (\ref{1011.3h})--(\ref{1010.3h}), we have the following expansions
\begin{align} \label{0508.9}
\bbV^\top & \bbF(z)\bbV  =\bbV^\top\bbG(z)\bbV_{-}\left(-\bbD^{-1}\bbI+\bbV_{-}^\top\bbG(z)\bbV_{-}\right)^{-1}\bbV_{-}^\top\bbG(z)\bbV \\
&=\mathcal{R}(\bbV,\bbV_{-},z)\left(-\bbD^{-1}\bbI+\mathcal{R}(\bbV_{-},\bbV_{-},z)\right)^{-1}\mathcal{R}(\bbV_{-},\bbV,z)+\Delta_{n2}\,,\nonumber
\end{align}
and
	\begin{align}\label{1011.2h}
	\bbV^\top\bbF'(z)\bbV & =2\bbV^\top\bbG'(z)\bbV_{-}\left(-\bbD^{-1}+\bbV_{-}^\top\bbG(z)\bbV_{-}\right)^{-1}\bbV_{-}^\top\bbG(z)\bbV\\
	&\quad +\bbV^\top\bbG(z)\bbV_{-}\left\{\left(-\bbD^{-1}+\bbV_{-}^\top\bbG(z)\bbV_{-}\right)^{-1}\right\}'\bbV_{-}^\top\bbG(z)\bbV\non
	&= 2\mathcal{R}'(\bbV,\bbV_{-},z)\left(-\bbD+\mathcal{R}(\bbV_{-},\bbV_{-},z)\right)^{-1}\mathcal{R}(\bbV_{-},\bbV,z)\non
	&\quad+\mathcal{R}(\bbV,\bbV_{-},z)\left\{\left(-\bbD+\mathcal{R}(\bbV_{-},\bbV_{-},z)\right)^{-1}\right\}'\mathcal{R}(\bbV_{-},\bbV,z) \nonumber\\
	&\quad+\Delta_{n3}\,,\nonumber
	\end{align}
where $\|\Delta_{n2}\|= O_p(\frac{\sigma_n^2}{a_n^{4}})$ and $\|\Delta_{n3}\|=O_p(\frac{1}{a_n^{4}})+O_p(\frac{\sigma_n^3}{a_n^{6}}).$

Now we turn to \eqref{0314.5h}.
By (\ref{1011.3h1}), (\ref{1011.4hh}) and (\ref{1011.1h}), we can see that
$\|\bbV^\top\bbG(\widehat t_k)\bbV_{-}\| = O_p(\frac{1}{a_n^2})$, $|\bbv_1\bbG(\widehat t_k)\bbv_2| = O_p(\frac{1}{a_n^2})$ and $|\bbv_{-1}\bbG(\widehat t_k)\bbv_{-2}| = O_p(\frac{1}{a_n^2})$. In other words, the off diagonal terms in the determinant (\ref{0314.5h}) are all $O_p(\frac{1}{a_n^2})$.

 The $3$rd diagonal entry in the determinant (\ref{0314.5h}) is $\bbv_{-1}^\top\bbG(\widehat t_k)\bbv_{-1}-\frac{1}{d_1}$. By  (\ref{1011.3h1}), (\ref{1011.4hh}) and (\ref{1011.1h}), we have $\bbv_{-1}^\top\bbG(\widehat t_k)\bbv_{-1}=-\frac{1}{d_k}+o_p(\frac{1}{a_n})$. i.e. $\bbv_{-1}^\top\bbG(\widehat t_k)\bbv_{-1}-\frac{1}{d_1}=-\frac{1}{d_k}-\frac{1}{d_1}+o_p(\frac{1}{a_n})$. Similarly, the $4$th diagonal entry is $\bbv_{-2}^\top\bbG(\widehat t_k)\bbv_{-2}-\frac{1}{d_2}=-\frac{1}{d_k}-\frac{1}{d_2}+o_p(\frac{1}{a_n})$.   Therefore the matrix $\bbV_{-}^\top\bbG(\widehat t_k)\bbV_{-}-\bbD^{-1}$ is invertible with  probability tending to 1. Recalling the determinant formula for block structure matrix that
$$\det\left(
\begin{array}{ccc}
\bbA &\bbB^\top\\
\bbB & \bbC\\
\end{array}
\right)=\det(\bbC)\det(\bbA-\bbB^\top\bbC^{-1}\bbB)\,,$$
for any invertible matrix $\bbC$ and setting $\bbC=\bbV_{-}^\top\bbG(\widehat t_k)\bbV_{-}-\bbD$,  we have with  probability tending to 1,
\begin{equation}\label{0211.1}
\det(\bbV^\top(\bbG(\widehat t_k)-\bbF(\widehat t_k))\bbV+\bbD^{-1})=0\,,
 \end{equation}
 where $\bbF(z)=\bbG(z)\bbV_{-}\left(-\bbD^{-1}+\bbV_{-}^\top\bbG(z)\bbV_{-}\right)^{-1}\bbV_{-}^\top \bbG(z)$.

	The three equations \eqref{1011.3h}, \eqref{1011.4h} and \eqref{1011.2h}  lead to
\begin{align} \label{0515.2c}
\|\bbV^\top\left(\bbG'(z)-\bbF'(z)\right)\bbV-\frac{1}{z^2}\mathcal{\widetilde P}_{z}^{-1}-2z^{-3}\bbV^\top\bbW\bbV\|=O_p\left(\frac{\sigma_n}{a_n^4}\right)\,,
\end{align}
for $z\in [a_n, b_n]$, where
$$\mathcal{\widetilde P}_{z}^{-1}=z^2\left(\frac{A_{\bbV,z}}{z}\right)'\,,$$
and
\begin{equation}\label{0619.2}
A_{\bbV,z}=\left\{t\mathcal{R}(\bbV,\bbV,z)-z\mathcal{R}(\bbV,\bbV_{-},z)\left(-\bbD+\mathcal{R}(\bbV_{-},\bbV_{-},z)\right)^{-1}\mathcal{R}(\bbV_{-},\bbV,z)\right\}^\top\,.
\end{equation}

Further, recalling the definition in (\ref{0619.2}), it holds that
\begin{align}\label{1011.5h}
\frac{1}{z^2}\mathcal{\widetilde P}_{z}^{-1}&=\left(\frac{A_{\bbV,z}}{z}\right)'=\mathcal{R}'(\bbV,\bbV,z)- 2\mathcal{R}'(\bbV,\bbV_{-},z)\left(-\bbD+\mathcal{R}(\bbV_{-},\bbV_{-},z)\right)^{-1} \nonumber\\
&\quad \times \mathcal{R}(\bbV_{-},\bbV,z) -\mathcal{R}(\bbV,\bbV_{-},z)\left\{\left(-\bbD+\mathcal{R}(\bbV_{-},\bbV_{-},z)\right)^{-1}\right\}'\mathcal{R}(\bbV_{-},\bbV,z)\,.
\end{align}
By (\ref{1011.4hh}), (\ref{1011.4h}) and (\ref{1010.6h}), we have
$$\|\mathcal{\widetilde P}_{z}^{-1}-\bbI\|=O\left(\frac{\sigma_n^2}{a_n^2}\right)\,.$$
Plugging this into \eqref{0515.2c} and by Lemmas \ref{mos}, we have for all $z\in [a_n, b_n]$,
\begin{align}\label{0515.2b1}
\| \bbV^\top \left(\bbG'(z)-\bbF'(z)\right)\bbV-z^{-2}\bbI-2z^{-3}\bbV^\top\bbW\bbV\|=a_n^{-4}O_p(\sigma_n^2)\,.
\end{align}
Hence there exists a $2\times 2$ random matrix $\bbB$ such that
\begin{equation}\label{0515.2b}
  \bbV^\top \left(\bbG'(z)-\bbF'(z)\right)\bbV=z^{-2}\bbB(z),
\end{equation}
where $\|\bbB(z)-\bbI\|=O_p(a_n^{-1} + a_n^{-2}\sigma_n^2).$

Further, in light of expressions (\ref{1011.3h1}) and \eqref{0508.9}, we can obtain the asymptotic expansion
\begin{align} \label{0515.2}
\|\bbI & +\bbD\bbV^\top\left(\bbG(z)-\bbF(z)\right)\bbV- f(z) + z^{-2}\bbD\bbV^\top\bbW\bbV\|= O_p(a_n^{-2}\sigma_n)\,,
\end{align}
for all $z\in [a_n, b_n]$, where $f(z)$ is defined in (\ref{eq2}).

	In view of (\ref{0515.2}) and the definition of $t_k$, we have
	\begin{equation} \label{neweq019}
	\left\|\bbI+\bbD\bbV^\top\left(\bbG(t_k)-\bbF(t_k)\right)\bbV-f(t_k)+t^{-2}_k\bbD\bbV^\top\bbW\bbV\right\|=O_p\left(\frac{\sigma_n}{a_n^2}\right)\,, \ k=1,2\,.
	\end{equation}
	By \eqref{0211.1}, (\ref{0515.2b}) and (\ref{neweq019}), an application of the mean value theorem yields
		\begin{align} \label{neweq018}
		0 & = \det(\bbI +\bbD\bbV^\top\left(\bbG(\widehat t_k)-\bbF(\widehat t_k)\right)\bbV)=\det(\bbI+\bbD\bbV^\top\left(\bbG(t_1)-\bbF(t_1)\right)\bbV \nonumber\\
		&\quad +\bbD\tilde\bbB(\widehat t_k-t_1))\,, \ k=1,2\,,
		\end{align}
		where $\tilde\bbB=(\tilde B_{ij}(\tilde t_{ij}))$, $\tilde t_{ij}^2\tilde B_{ij}(\tilde t_{ij})=\delta_{ij}+O_p(a_n^{-1} + a_n^{-2}\sigma_n^2)$ by (\ref{0515.2b}) and $\widetilde{t}_{ij}$ is some number between $t_1$ and $\widehat t_k$.
By (\ref{0515.2}), similar to (\ref{0211.3})--(\ref{0518.6}), we can show that
\begin{equation}\label{c1}
|\widehat t_k-t_1|=O_p\left(1+\frac{\sigma_n^2}{a_n}\right)+|d_1-d_k|\,.
\end{equation}
 (\ref{neweq018}) can be rewritten as
		\begin{align} \label{neweq018h}
		0 & = \det(\bbI +\bbD\bbV^\top\left(\bbG(\widehat t_k)-\bbF(\widehat t_k)\right)\bbV)=\det(\bbI+\bbD\bbV^\top\left(\bbG(t_1)-\bbF(t_1)\right)\bbV \nonumber\\
		&\quad + t^{-2}_1\bbD\bbC(\widehat t_k-t_1))\,,\  k=1,2\,,
		\end{align}
where
\begin{equation}\label{0224.1}
\|\bbC-\bbI\|=O_p\left(a_n^{-1} +a_n^{-2}\sigma_n^2+\frac{d_1-d_2}{a_n}\right)\,.
\end{equation}
We know that $\widehat t_k-t_1$, $k=1,2$ are the eigenvalues  of $t_1^2\bbC^{-1}\bbD^{-1}\left(\bbI+\bbD\bbV^\top\left(\bbG(t_1)-\bbF(t_1)\right)\bbV\right)$.  Combining (\ref{0212.3}) with the definition of $g(z)$ in (\ref{0227.1}),  we have $g_{ij}(t_k)=O(\frac{\sigma_n^2}{a_n}+d_1-d_2)+O_p(1)$, $1\le i,j,k\le 2$.  The asymptotic expansions in (\ref{neweq019}), (\ref{0224.1}) and Lemma \ref{0505-1} together with the condition
\eqref{0513.2} and \eqref{0513.4} imply that
\begin{align}\label{0507.1}
t_1^2\bbC^{-1}\bbD^{-1}\left(\bbI+\bbD\bbV^\top\left(\bbG(t_1)-\bbF(t_1)\right)\bbV\right)=g(t_1)+\Delta_{n4}\,,
 \end{align}
 where $\Delta_{n4}$ is a symmetric matrix with $\|\Delta_{n4}\|=o_p(1)$. By \eqref{0507.1}, we can rewrite \eqref{neweq018h} as follows,
 		\begin{align} \label{0515.1}
		\det(g(t_1)+\Delta_{n4} +(\widehat t_k-t_1)\bbI)=0,\  k=1,2\,.
		\end{align}
Moreover, by \eqref{0227.1}, the eigenvalues of $g(t_1)$ are
	\begin{align} \label{0515.2h}
		\frac{1}{2}\left[-g_{11}(t_1)-g_{22}(t_1)\pm\left\{\left(g_{11}(t_1)+g_{22}(t_1)\right)^2-4\left(g_{11}(t_1)g_{22}(t_1)-g^2_{12}(t_1)\right)\right\}^{\frac{1}{2}}\right]\,.
		\end{align}
 Combining (\ref{0515.1})--(\ref{0515.2h}) with Weyl's inequality and noticing that $\widehat t_1>\widehat t_2$, we have the following expansions
	\begin{equation} \label{0516.15}
	\widehat t_1-t_1=\frac{1}{2}\left[-g_{11}(t_1)-g_{22}(t_1)+\left\{\left(g_{11}(t_1)+g_{22}(t_1)\right)^2-4\left(g_{11}(t_1)g_{22}(t_1)-g^2_{12}(t_1)\right)\right\}^{\frac{1}{2}}\right]+o_{p}(1)\,,
	\end{equation}
and
	\begin{equation}
	\widehat t_2-t_1=\frac{1}{2}\left[-g_{11}(t_1)-g_{22}(t_1)-\left\{\left(g_{11}(t_1)+g_{22}(t_1)\right)^2-4\left(g_{11}(t_1)g_{22}(t_1)-g^2_{12}(t_1)\right)\right\}^{\frac{1}{2}}\right]+o_{p}(1).
	\end{equation}
Expanding the determinant at $t_2$ in \eqref{neweq018} and repeating the process from \eqref{neweq018}--\eqref{0515.2}, we also have
	\begin{equation} \label{0516.15}
	\widehat t_2-t_2=\frac{1}{2}\left[-g_{11}(t_2)-g_{22}(t_2)-\left\{\left(g_{11}(t_2)+g_{22}(t_2)\right)^2-4\left(g_{11}(t_2)g_{22}(t_2)-g^2_{12}(t_2)\right)\right\}^{\frac{1}{2}}\right]+o_{p}(1).
	\end{equation}

\subsection{More discussion of Proposition \ref{t1}}
\setcounter{lem}{1}
In this section we show that the major terms at the right hand sides of \eqref{th1} and \eqref{th2} are meaningful, as shown in the following lemma.
\begin{lem}\label{0625-1}
\begin{equation}\label{0510.1}
\frac{1}{2}\left[-g_{11}(t_1)-g_{22}(t_1)+\left\{\left(g_{11}(t_1)+g_{22}(t_1)\right)^2-4\left(g_{11}(t_1)g_{22}(t_1)-g^2_{12}(t_1)\right)\right\}^{\frac{1}{2}}\right]=O_p(1)\,,
 \end{equation}
 and
 \begin{equation}\label{0510.2}
\frac{1}{2}\left[-g_{11}(t_2)-g_{22}(t_2)-\left\{\left(g_{11}(t_2)+g_{22}(t_2)\right)^2-4\left(g_{11}(t_1)g_{22}(t_2)-g^2_{12}(t_2)\right)\right\}^{\frac{1}{2}}\right]=O_p(1)\,.
 \end{equation}
 \end{lem}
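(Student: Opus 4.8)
The plan is to identify the two bracketed expressions with eigenvalues of the symmetric $2\times2$ matrix $g(t_k)$ and then to show that $g(t_k)$ is an $O_p(1)$ random perturbation of a deterministic \emph{singular} matrix with a sign‑controlled nonzero eigenvalue; this makes the leading terms on the right of \eqref{th1}--\eqref{th2} exactly $O_p(1)$, so that the $o_p(1)$ remainders there are indeed negligible.

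First I would record that $g(z)$ in \eqref{0227.1} is symmetric. Since $\bbW$ is symmetric, $\mathcal{R}(\bbM_1,\bbM_2,z)^\top=\mathcal{R}(\bbM_2,\bbM_1,z)$, so $f(z)$ in \eqref{eq2} has the form $\bbI+\bbD\bbS(z)$ with $\bbS(z)$ symmetric; hence $z^2\bbD^{-1}f(z)=z^2\bbD^{-1}+z^2\bbS(z)$ is symmetric, and subtracting the symmetric $\bbV^\top\bbW\bbV$ preserves this. Consequently the quantity in \eqref{0510.1} equals $-\lambda_{\min}(g(t_1))$ and the one in \eqref{0510.2} equals $-\lambda_{\max}(g(t_2))$ (reading the ``$g_{11}(t_1)$'' in \eqref{0510.2} as a typo for ``$g_{11}(t_2)$''), so it suffices to prove $|\lambda_{\min}(g(t_1))|=O_p(1)$ and $|\lambda_{\max}(g(t_2))|=O_p(1)$.

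Next I would write $g(t_k)=\bbA_k-\bbV^\top\bbW\bbV$ with $\bbA_k:=t_k^2\bbD^{-1}f(t_k)$, and note: $\bbA_k$ is deterministic (all ingredients of $f$ are expectations of powers of $\bbW$ together with $\bbD,\bbV,\bbV_{-}$, and $t_k$ is a deterministic root of $\det f(z)=0$ in \eqref{0518.7}); and $\det\bbA_k=t_k^4(d_1d_2)^{-1}\det f(t_k)=0$, so the symmetric $\bbA_k$ has $0$ for one eigenvalue and $\tr\bbA_k$ for the other. To decide which of $\lambda_{\min}(\bbA_k),\lambda_{\max}(\bbA_k)$ vanishes, I would use the expansion, uniform in $z\in[a_n,b_n]$ under \eqref{0513.2} and Assumption \ref{a2},
\[
z^2\bbD^{-1}f(z)=z^2\bbD^{-1}-z\bbI-z^{-1}\bbV^\top\E\bbW^2\bbV+\Delta(z),\qquad\|\Delta(z)\|=o(1),
\]
which follows from \eqref{eq2}, the vanishing of the odd moments of $\bbW$, the bound $\|\bbV^\top\E\bbW^2\bbV\|=O(\sigma_n^2)$ (cf.\ \eqref{0826.2h}, \eqref{th4}), \eqref{0826.3h}, and the crude estimate of the $\bbV_{-}$-block already used in the proof of Proposition \ref{t1}. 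Differentiating (with the remainder bounds \eqref{1011.4h}--\eqref{1010.6h}), $\frac{d}{dz}\big(z^2\bbD^{-1}f(z)\big)=2z\bbD^{-1}-\bbI+o(1)=\bbI+o(1)\succ0$ on $[a_n,b_n]$ since $z\sim d_1\sim d_2$ there, so the two eigenvalue branches $\mu_1(z)\ge\mu_2(z)$ of $z^2\bbD^{-1}f(z)$ are strictly increasing. Evaluating at the endpoints, $a_n^2\bbD^{-1}f(a_n)\preceq-(1-o(1))\sigma_n\bbI\prec0$ and $b_n^2\bbD^{-1}f(b_n)\succeq(1-o(1))\sigma_n\bbI\succ0$, using $d_1\gg\sigma_n$ (so $z^{-1}\|\bbV^\top\E\bbW^2\bbV\|=O(\sigma_n^2/d_1)=o(\sigma_n)$) and $d_1-d_2=o(\sqrt{d_2})$. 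Hence both branches pass from negative to positive across $[a_n,b_n]$; since $\mu_1\ge\mu_2$, the larger branch reaches $0$ at the smaller root $t_2$ and the smaller branch at the larger root $t_1$, so $\lambda_{\max}(\bbA_2)=\mu_1(t_2)=0$ and $\lambda_{\min}(\bbA_1)=\mu_2(t_1)=0$.

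Finally, Weyl's inequality gives $|\lambda_{\min}(g(t_1))|=|\lambda_{\min}(\bbA_1-\bbV^\top\bbW\bbV)-\lambda_{\min}(\bbA_1)|\le\|\bbV^\top\bbW\bbV\|$ and $|\lambda_{\max}(g(t_2))|\le\|\bbV^\top\bbW\bbV\|$, and $\|\bbV^\top\bbW\bbV\|=O_p(1)$ because each entry $\bbv_i^\top\bbW\bbv_j$ is a centered Gaussian with variance $O(1)$ by \eqref{th3}. This yields \eqref{0510.1} and \eqref{0510.2}. The delicate step is the third paragraph: holding $\Delta(z)$ and $z^{-1}\bbV^\top\E\bbW^2\bbV$ below the scale $\sigma_n$ uniformly on $[a_n,b_n]$ — exactly where $d_2\gg\sigma_n^{4/3}$ (hence $d_1\gg\sigma_n$) and $d_1-d_2=o(\sqrt{d_2})$ enter — and correctly matching the two monotone branches to the roots $t_1,t_2$.
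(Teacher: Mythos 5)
Your argument is correct, but it follows a genuinely different and cleaner route than the paper's.  You observe that $g(z)$ is symmetric, identify the two bracketed expressions with $-\lambda_{\min}(g(t_1))$ and $-\lambda_{\max}(g(t_2))$, split $g(t_k)=\bbA_k-\bbV^\top\bbW\bbV$ with $\bbA_k:=t_k^2\bbD^{-1}f(t_k)$ deterministic and singular, pin down which eigenvalue of $\bbA_k$ vanishes by a monotone matrix-eigenvalue-branch argument on $[a_n,b_n]$, and finish with one application of Weyl's inequality plus $\|\bbV^\top\bbW\bbV\|=O_p(1)$.  The paper instead expands $g_{ij}(t_1)=\frac{t_1^2}{d_i}f_{ij}(t_1)+O_p(1)$, writes the discriminant as $h_1^2+h_2+h_3$ with $h_1=\tr\bbA_1$, proves $h_1\ge0$ from the scalar facts $f_{11}'>0$, $f_{22}'>0$ and the ordering $t_1\ge\tilde t_1\ge\tilde t_2\ge t_2$ established in Lemma \ref{0508-1}, and bounds $|-h_1+\sqrt{h_1^2+h_2+h_3}|$ by elementary case analysis on $|h_3|$ versus $|h_1|$ restricted to the event $\|\bbV^\top\bbW\bbV\|\le M_0$.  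Both proofs thus rest on the same three structural facts — $\det\bbA_k=0$ since $\det f(t_k)=0$, the correct sign of $\tr\bbA_k$ depending on whether $t_k$ is the larger or smaller root, and $\|\bbV^\top\bbW\bbV\|=O_p(1)$ from \eqref{th3} — but your route replaces the case-by-case algebra with a single Weyl perturbation, which is more transparent and would extend mechanically to $K>2$.  Two small points worth keeping explicit: (i) the ``$g_{11}(t_1)$'' inside \eqref{0510.2} must be read as the typo-corrected $g_{11}(t_2)$, exactly as you did, for the eigenvalue identification to hold; and (ii) the uniform $o(1)$ control of $\Delta'(z)$ and of $z^{-2}\bbV^\top\E\bbW^2\bbV$ on $[a_n,b_n]$ in your derivative step, and the endpoint bounds $a_n^2\bbD^{-1}f(a_n)\preceq-(1-o(1))\sigma_n\bbI$, $b_n^2\bbD^{-1}f(b_n)\succeq(1-o(1))\sigma_n\bbI$, do require $d_1\gg\sigma_n$, $d_2\gg\sigma_n^{4/3}$ and $d_1-d_2=o(\sqrt{d_2})$ together with \eqref{1011.4hh}--\eqref{1010.6h}; you flag this, and it does hold, but it is the only place where the argument could quietly go wrong.
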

 \begin{proof}
The proofs of \eqref{0510.1} and \eqref{0510.2} are the same, so we only prove \eqref{0510.1}.

By Lemma \ref{mos}, we have $g_{ij}(t_1)=\frac{t_1^2}{d_i}f_{ij}(t_1)+O_p(1)$. Therefore it suffices to show that
$$\frac{1}{2}\left[-\frac{t_1^2}{d_i}f_{11}(t_1)-\frac{t_1^2}{d_2}f_{22}(t_1)+\left\{\left(g_{11}(t_1)+g_{22}(t_1)\right)^2-4\left(g_{11}(t_1)g_{22}(t_1)-g^2_{12}(t_1)\right)\right\}^{\frac{1}{2}}\right]=O_p(1)\,.$$
{\color{black} By Lemma \ref{mos}}, for any $\ep>0$, there exists a constant $M_0$ such that
$$\p\left(\|\bbV^\top\bbW\bbV\|\ge M_0\right)\le \ep\,.$$
Now we consider the inequality constraint on the event $\{\|\bbV^\top\bbW\bbV\|\le M_0\}$. Let $h_1=\frac{t_1^2}{d_1}f_{11}(t_1)+\frac{t_1^2}{d_2}f_{22}(t_1)$.  It follows from the definition of $t_1$, (\ref{0211.2}){\color{blue}, \eqref{0504.1} and} \eqref{0211.3} that
$$f_{11}(t_1)\ge 0\,, \ \text{and} \ f_{22}(t_1)\ge 0\,.$$
{\color{black}Let $$h_2=2h_1(\bbv_1^\top\bbW\bbv_1+\bbv_2^\top\bbW\bbv_2)-4\frac{t_1^2}{d_1}f_{11}(t_1)\bbv_2^\top\bbW\bbv_2-4\frac{t_1^2}{d_2}f_{22}(t_1)\bbv_1^\top\bbW\bbv_1+4t_1^2\left(\frac{f_{12}(t_1)}{d_1}+\frac{f_{21}(t_1)}{d_2}\right)\bbv_1^\top\bbW\bbv_2\,,$$ and
$$h_3=(\bbv_1^\top\bbW\bbv_1-\bbv_2^\top\bbW\bbv_2)^2+4(\bbv_1^\top\bbW\bbv_2)^2\,.$$}
By the definition of $g$ and the above equations, we have
$$(g_{11}(t_1)+g_{22}(t_1))^2-4\left(g_{11}(t_1)g_{22}(t_1)-g^2_{12}(t_1)\right)=h_1^2+h_2+h_3\,.$$
Note that $|h_2|\le M_1|h_1|$ and $|h_3|\le M_2$, where $M_1$ and $M_2$ are polynomial functions of $M_0$ (depending on $M_0$ only). Now we consider two cases:

1. $|h_3|\le |h_1|$, then we have $|h_2+h_3|\le (M_2+1)|h_1|$. Then
\begin{eqnarray*}
&&\left|-\frac{t_1^2}{d_1}f_{11}(t_1)-\frac{t_1^2}{d_2}f_{22}(t_1)+\left\{\left(g_{11}(t_1)+g_{22}(t_1)\right)^2-4\left(g_{11}(t_1)g_{22}(t_1)-g^2_{12}(t_1)\right)\right\}^{\frac{1}{2}}\right|\non
&&=|-h_1+(h_1^2+h_2+h_3)^{\frac{1}{2}}|= \frac{|h_2+h_3|}{h_1+\left(h_1^2+h_2+h_3\right)^{\frac{1}{2}}}\le M_2+1\,.
\end{eqnarray*}

2. $|h_3|\ge |h_1|$, then
\begin{eqnarray}
&&\left|-\frac{t_1^2}{d_1}f_{11}(t_1)-\frac{t_1^2}{d_2}f_{22}(t_1)+\left\{\left(g_{11}(t_1)+g_{22}(t_1)\right)^2-4\left(g_{11}(t_1)g_{22}(t_1)-g^2_{12}(t_1)\right)\right\}^{\frac{1}{2}}\right|\\
&&=|-h_1+(h_1^2+h_2+h_3)^{\frac{1}{2}}|\le (M_2+1)^2+M_1M_2\,.\nonumber
\end{eqnarray}
Combining the two cases, we have shown that given $\|\bbV^\top\bbW\bbV\|\le M_0$, there exists $M_3$ depending on $M_0$ only such that
$$\left|\frac{1}{2}\left[-g_{11}(t_1)-g_{22}(t_1)+\left\{\left(g_{11}(t_1)+g_{22}(t_1)\right)^2-4\left(g_{11}(t_1)g_{22}(t_1)-g^2_{12}(t_1)\right)\right\}^{\frac{1}{2}}\right]\right|\le M_3\,.$$
In other words,
$$\frac{1}{2}\left[-g_{11}(t_1)-g_{22}(t_1)+\left\{\left(g_{11}(t_1)+g_{22}(t_1)\right)^2-4\left(g_{11}(t_1)g_{22}(t_1)-g^2_{12}(t_1)\right)\right\}^{\frac{1}{2}}\right]=O_p(1)\,.$$
This concludes the proof of Lemma \ref{0625-1}.
\end{proof}

\subsection{Proof of Theorem \ref{t2}}
By Lemma \ref{bod} and weyl's inequality $|\widehat t_k-d_k|\le \|\bbW\|$, $k=1,2$, we have
$$\p\left(\widehat t_2\ge d_2-C_0\max\{n^{\frac{1}{2}}, p^{\frac{1}{2}}\}\right)\ge 1-n^{-2}\,,$$
and
$$\p\left(\widehat t_1\le d_1+C_0\max\{n^{\frac{1}{2}}, p^{\frac{1}{2}}\}\right)\ge 1-n^{-2}\,,$$
for some positive constant $C_0$ and sufficiently large $n$.
Combining the above two equations with $d_1\gg \sigma_n$, and $d_1 / d_2\le 1+n^{-c}$, we have
$$\p\left(\frac{\widehat t_1}{\widehat t_2}\ge 1+C\left(\frac{\sigma_n}{d_1}+\frac{1}{n^{c}}\right)\right)\rightarrow 0\,,$$
where $C$ is some positive constant.

\subsection{Proof of Theorem \ref{t3}}
By Lemma \ref{bod}, there exists a constant $C>0$ such that
\begin{equation}\label{0605.5}
\p\left(\|\bbW\|\ge C\max\{n^{\frac{1}{2}}, p^{\frac{1}{2}}\}\right)\le n^{-D}.
\end{equation}

By Weyl's inequality, we have
\begin{equation}\label{0329.3c}
\max_{i=1,2}|\widehat t_i-d_i|\le \|\bbW\|\,.
\end{equation}
By \eqref{0329.3c} and the condition that $d_1\ge (1+c)d_2$, we have
\begin{equation}\label{0329.1c}
\frac{\widehat t_1}{\widehat t_2}\ge \frac{d_1-\|\bbW\|}{d_2+\|\bbW\|}\ge \frac{1+c-\frac{\|\bbW\|}{d_2}}{1+\frac{\|\bbW\|}{d_2}}\,.
\end{equation}
If $d_2\ge \frac{c}{c+4}C\max\{n^{\frac{1}{2}}, p^{\frac{1}{2}}\}$, by \eqref{0605.5} and \eqref{0329.1c}, we have
$$\p\left(\frac{\widehat t_1}{\widehat t_2}\le 1+\frac{c}{2}\right)\le \p\left(\frac{1+c-\frac{\|\bbW\|}{d_2}}{1+\frac{\|\bbW\|}{d_2}}\le 1+\frac{c}{2}\right)\le n^{-D}$$
If $d_2< \frac{c}{c+4}C\max\{n^{\frac{1}{2}}, p^{\frac{1}{2}}\}$, by the condition that $d_1\gg \sigma_n$, \eqref{0605.5} and \eqref{0329.1c}, for sufficiently large $n$  we have
\begin{equation}\label{0329.2c}
\p\left(\frac{\widehat t_1}{\widehat t_2}\le 1+\frac{c}{2}\right)\le n^{-D}\,.
\end{equation}
This together with the assumption that $d_1 / d_2 \ge 1+c$  implies \eqref{0605.1}. Now we turn to \eqref{0605.2}. Let $\widehat\bbu_1=(\widehat\bbv_1(1),\ldots,\widehat\bbv_1(n))^\top$ and $\widehat{\mathfrak{u}}_1=(\widehat\bbv_1(n+1),\ldots,\widehat\bbv_1(n+p))^\top$. Notice that $\widehat\bbv_1$ is the unit eigenvector of $\mathcal{Z}$ corresponding to $\widehat{d}_1$. By the definition of $\mathcal{Z}$, we know that $2^{1/2}\widehat\bbu_1$ is the unit eigenvector of $\bbX^\top\bbX$ corresponding to $\widehat{d}_1^2$ and $2^{1/2}\widehat{\mathfrak{u}}_1$ is the unit eigenvector of $\bbX\bbX^\top$ corresponding to $\widehat{d}_1^2$.  Similarly, by the condition that the first $n$ entries of $\bbv_1$ are equal, we imply that the first entries of $\bbv_1$ are equal to $(2n)^{-1/2}$. Let $\mathbf{1}_n$ be an $n$-dimensional vector whose entries are all $1$'s.
 By the second inequality of Theorem 10 in the supplement of \cite{cai2013},  we obtain that
\begin{equation}\label{0605.3}
2-2(\bbv_1^\top\widehat\bbv_1)^2\le \frac{\|\bbW\|}{d_1-d_2-\|\bbW\|}\,.
\end{equation}
Since  $d_1/d_2 \geq 1+c$, we have
\begin{equation}\label{0605.4}
d_1-d_2   \ge c(1+c)^{-1}d_1\,.
\end{equation}
Let $C_0=\max\{c(1+c)^{-1},C\}-1$, where $C$ is given in \eqref{0605.5}. By \eqref{0605.5}, \eqref{0605.3} and \eqref{0605.4}, we imply that
$$\p\Big(2-2(\bbv_1^\top\widehat\bbv_1)^2\le \frac{(C_0+1)(\frac{\sigma_n}{d_1})^{2/3}}{C_0}\Big)\ge 1-n^{-D}\,.$$
\begin{equation}\label{0605.7}
\p\Big(|\bbv_1^\top\widehat\bbv_1|\ge 1-\sqrt{\frac{2\sigma_n}{d_1}} \Big)\ge 1-n^{-D}\,,
\end{equation}
where $n\ge n_0(\ep,D)$. Notice that $2^{\frac{1}{2}}\mathfrak{\widehat{u}_1}$ is a unit vector, we have
 $$|\bbv_1^\top\widehat\bbv_1|\le |\mathbf{1}_n^\top\widehat\bbu_1|+\frac{1}{ 2}=\frac{1}{(2n)^{\frac{1}{2}}}|\bbu_0^\top\widehat\bbv_1|+\frac{1}{2}\,.$$
 This together with \eqref{0605.7} implies that
\begin{equation}
\p\left(
\left|\left(\frac{1}{n}\right)^{\frac{1}{2}}|\bbu_0^\top\widehat\bbv_1|-\left(\frac{1}{2}\right)^{\frac{1}{2}}\right|\ge\sqrt{\frac{2\sigma_n}{d_1}} \right)\le n^{-D}\,.
\end{equation}
This completes the proof.

\subsection{Technical Lemmas and their proofs}
\begin{lem}\label{mos}

{\color{black} Take (i)  in Assumption \ref{a2}. For $\bbX$ we considered in this paper and any positive integer $l$, there exists a positive constant $C_l$ (depending on $l$) such that }
\begin{equation}\label{th6}
\E|\bbx^{\top}(\bbW^l-\E\bbW^l)\bby|^2\le C_l\sigma_n^{l-1}\,,
\end{equation}
and $\E\bbx^{\top}\bbW\bby =0$ and
\begin{equation}\label{th7}
|\E\bbx^{\top}\bbW^l\bby|\le C_l\sigma_n^{l}\,, \text{ for } l\geq 2.
\end{equation}
where $\bbx$ and $\bby$ are two unit vectors (random or not random) independent of $\bbW$.
\end{lem}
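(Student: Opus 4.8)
All three assertions are moment bounds for bilinear forms in the centered Gaussian matrix $\bbW$, which has off-diagonal blocks equal to $G:=\bbX-\E\bbX$ and $G^\top$, the columns of $G$ being i.i.d.\ $\mathcal N(\bzero,\bSig)$. My first step is a reduction: since $\bbx$ and $\bby$ are independent of $\bbW$, I would condition on $(\bbx,\bby)$ and prove every bound uniformly over \emph{deterministic} unit vectors, the constants depending only on $l$ and on the spectral bounds for $\bSig$ from Assumption \ref{a2}(i); the random-vector versions follow by averaging (and for \eqref{th6} the conditional mean of $\bbx^\top(\bbW^l-\E\bbW^l)\bby$ is $0$, so its conditional second moment is a conditional variance). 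I would also use the representation $G=\bSig^{1/2}Z$ with $Z$ a $p\times n$ array of i.i.d.\ standard normals, so that each entry of $\bbW$ is a fixed linear form in the $Z_{ab}$'s and joint moments of the entries of $\bbW$ are computed by Wick's formula.

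\emph{The two easy claims.} The identity $\E\bbx^\top\bbW\bby=0$ is immediate from $\E\bbW=\bzero$. For \eqref{th7}: each entry of $\bbW^l$ is a degree-$l$ polynomial in the centered Gaussians $Z_{ab}$, so $\E\bbW^l=\bzero$ for odd $l$ and the bound is vacuous there; for even $l$, $\bbW^l=(\bbW^2)^{l/2}$ is block diagonal, hence $|\bbx^\top\E\bbW^l\bby|\le\|\E\bbW^l\|\le\E\|\bbW\|^l=\E\|G\|^l$, and Gaussian (Lipschitz) concentration of $\|G\|$ around $\E\|G\|\le C\|\bSig\|^{1/2}(\sqrt n+\sqrt p)$ --- i.e.\ Lemma \ref{bod} --- yields $\E\|G\|^l\le C_l(\sqrt n+\sqrt p)^l\le C_l'(n+p)^{l/2}=C_l''\sigma_n^l$, using $\|\bSig\|\asymp1$.

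\emph{The main claim \eqref{th6}.} For deterministic $\bbx,\bby$ the left side equals $\var(\bbx^\top\bbW^l\bby)$, and my plan is the Gaussian Poincaré inequality applied to $f(Z)=\bbx^\top\bbW^l\bby$. Differentiating, $\partial_{Z_{ab}}f=\bbx^\top\big(\sum_{k=0}^{l-1}\bbW^k(\partial_{Z_{ab}}\bbW)\bbW^{l-1-k}\big)\bby$, where $\partial_{Z_{ab}}\bbW$ is the symmetric matrix whose only nonzero block is the rank-one product of the $a$-th column of $\bSig^{1/2}$ with the coordinate vector $\be_b$. A direct Cauchy--Schwarz computation should give, for each $k$, $\sum_{a,b}\big(\bbx^\top\bbW^k(\partial_{Z_{ab}}\bbW)\bbW^{l-1-k}\bby\big)^2\le4\|\bSig\|\,\|\bbW^k\bbx\|^2\,\|\bbW^{l-1-k}\bby\|^2\le4\|\bSig\|\,\|\bbW\|^{2(l-1)}$, so $\|\nabla_Z f\|^2\le4l^2\|\bSig\|\,\|\bbW\|^{2(l-1)}$ and $\var(\bbx^\top\bbW^l\bby)\le\E\|\nabla_Z f\|^2\le4l^2\|\bSig\|\,\E\|\bbW\|^{2(l-1)}\le C_l(n+p)^{l-1}\asymp C_l\sigma_n^{2l-2}$, which is (up to constants) the bound \eqref{th6} and precisely the magnitude invoked in \eqref{1011.3h1}--\eqref{1011.4h}. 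A fully elementary substitute, closer to the computational style of the surrounding proofs, is to expand $\bbx^\top\bbW^l\bby=\sum_{i_0,\dots,i_l}x_{i_0}y_{i_l}W_{i_0i_1}\cdots W_{i_{l-1}i_l}$, compute the second moment by Wick's formula as a sum over perfect matchings of the $2l$ factors attached to the two index paths, observe that the purely ``within-path'' matchings reconstruct exactly $(\bbx^\top\E\bbW^l\bby)^2$ (which is subtracted), and bound each remaining ``cross'' matching by counting how many summation indices remain free after the pairing constraints.

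\emph{The main obstacle.} The only nontrivial content is \eqref{th6}. On the Poincaré route the delicate point is verifying that the derivative sum contracts cleanly --- that the rank-one block structure of $\partial_{Z_{ab}}\bbW$ really collapses $\sum_{a,b}(\cdots)^2$ to $\|\bbW\|^{2(l-1)}$ without leaking extra powers of $n$ or $p$. On the Wick route the obstacle is the combinatorics: because $\bbW$ has nonzero entries only on the bipartite block $[n]\times\{n+1,\dots,n+p\}$, only certain pairings are admissible, and one must show every cross matching leaves at most $l-1$ genuinely free summation indices while the residual $\bSig$-sums, such as $\sum_{c,c'}\bSig_{cc'}^2=\tr(\bSig^2)\le p\|\bSig\|^2$, stay $O(p)$ rather than $O(p^2)$, so that the total is $O((n+p)^{l-1})$ with the $\ell_2$-normalizations of $\bbx$ and $\bby$ absorbing the endpoint sums.
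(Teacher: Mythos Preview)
Your proposal is correct and takes a genuinely different route from the paper. The paper first rotates $\bSig$ to diagonal form to obtain a matrix $\widetilde\bbW$ with independent (non-identically distributed) Gaussian entries, then expands the second moment of $\bbx^\top\bbW^l\bby$ as a sum over pairs of index paths, discards the pairs whose underlying graphs are disconnected (these have mean zero by independence), and bounds the surviving connected pairs by the graph-counting combinatorics of Lemmas~4 and~5 of \cite{FF18}, adapted from bounded to moment-bounded entries. Your main argument --- Gaussian Poincar\'e applied to $f(Z)=\bbx^\top\bbW^l\bby$, with the derivative sum collapsing (correctly, via $\sum_a\bs_a\bs_a^\top=\bSig$) to $\|\nabla f\|^2\le Cl^2\|\bSig\|\,\|\bbW\|^{2(l-1)}$ --- is shorter, self-contained, and yields $\var\le C_l\,\E\|\bbW\|^{2(l-1)}\le C_l(n+p)^{l-1}$ directly; your treatment of \eqref{th7} via $\|\E\bbW^l\|\le\E\|\bbW\|^l$ and Lipschitz concentration is likewise more economical than the paper's combinatorics. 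What the paper's route buys is robustness: the graph-counting method needs only moment bounds on the entries and so would extend to the non-Gaussian setting flagged in the paper's discussion, whereas Poincar\'e requires Gaussianity (or a log-Sobolev substitute). Both arguments produce the bound $\sigma_n^{2(l-1)}$, which is precisely what the applications in \eqref{1011.3h1}--\eqref{1011.4h} use; the exponent $l-1$ in the displayed \eqref{th6} appears to be a typo for $2(l-1)$, as you implicitly note.
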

\begin{proof}
Let $\mathcal{Y}=\Sigma^{-\frac{1}{2}}(\bbX-\E\bbX)$. Recall that $\bbX=(X_1,\ldots,X_n)$ is defined in  \eqref{0830.1h} by
$$X_i=Y_i\bmu_1+(1-Y_i)\bmu_2+W_i,\ i=1,\ldots,n\,,$$
where $\{W_i\}_{i=1}^{n}$ are i.i.d. from $\mathcal{N}(0, \Sigma)$. The entries of $\mathcal{Y}$ are i.i.d. standard normal random variables. Moreover, we decompose  $\bbW$ defined in \eqref{0506.2} by
$$\bbW=\left(
\begin{array}{cc}
\bbI & 0 \\
0 & \Sigma^{\frac{1}{2}} \\
\end{array}
\right)\left(
\begin{array}{cc}
0 & \mathcal{Y}^{\top} \\
\mathcal{Y} & 0 \\
\end{array}
\right)\left(
\begin{array}{cc}
\bbI & 0 \\
0 & \Sigma^{\frac{1}{2}} \\
\end{array}
\right)\,.$$
Let  the eigen decomposition of $\Sigma$ be $\bbU\Lambda\bbU^{\top}$. Since the entries of $\mathcal{Y}$ are i.i.d. standard normal random variables, we have $\mathcal{Y}\stackrel{d}{=}\bbU\mathcal{Y}$. Then $\bbW$ can be written as
$$\bbW\stackrel{d}{=}\left(
\begin{array}{cc}
\bbI & 0 \\
0 & \bbU \\
\end{array}
\right)\left(
\begin{array}{cc}
0 & \mathcal{Y}^{\top}\Lambda \\
\Lambda\mathcal{Y} & 0 \\
\end{array}
\right)\left(
\begin{array}{cc}
\bbI & 0 \\
0 & \bbU^{\top} \\
\end{array}
\right)\,.$$
Therefore
$$\bbx^{\top}\bbW^l\bby=\bbx^{\top}\left(
\begin{array}{cc}
\bbI & 0 \\
0 & \bbU \\
\end{array}
\right)\left(
\begin{array}{cc}
0 & \mathcal{Y}^{\top}\Lambda \\
\Lambda\mathcal{Y} & 0 \\
\end{array}
\right)^L\left(
\begin{array}{cc}
\bbI & 0 \\
0 & \bbU^{\top} \\
\end{array}
\right)\bby\,.$$
Let $\widetilde \bbx=\left(
\begin{array}{cc}
\bbI & 0 \\
0 & \bbU^{\top} \\
\end{array}
\right)\bbx$, $\widetilde \bby=\left(
\begin{array}{cc}
\bbI & 0 \\
0 & \bbU^{\top} \\
\end{array}
\right)\bby$ and $\widetilde \bbW=\left(
\begin{array}{cc}
0 & \mathcal{Y}^{\top}\Lambda \\
\Lambda\mathcal{Y} & 0 \\
\end{array}
\right)$, then we have
\begin{equation}\label{th10}
\bbx^{\top}\bbW^l\bby=\widetilde \bbx^{\top}\widetilde \bbW^l\widetilde \bby\,,
\end{equation}
where above diagonal entries of $\widetilde \bbW=(\widetilde w_{ij})_{1\le i,j\le n}$ are independent normal random variables such that for any positive integer $r$,
\begin{equation}\label{0903.1h}
\max_{1\le i,j\le n}\E|\widetilde{w}_{ij}|^r\le \|\Sigma\|^rc_r\,,
 \end{equation}where $c_r$ is the  $r$-th moment of standard normal distribution.   Actually, if $\{\widetilde w_{ij}\}_{1\le i,j\le n}$ were bounded random variables with
 \begin{equation}\label{th5}
 \max_{1\le i,j\le n}|\widetilde w_{ij}|\le 1\,,
 \end{equation}
then Lemmas 4 and 5 of \cite{FF18} imply that there exists a positive constant $c_l$ depending on $l$ such that
\begin{equation}\label{th8}
\E|\widetilde\bbx^{\top}(\widetilde \bbW^l-\E\widetilde \bbW^l)\widetilde\bby|^2\le c_l\sigma_n^{l-1}\,,
\end{equation}
and
\begin{equation}\label{th9}
|\E\widetilde\bbx^{\top}\widetilde \bbW^l\widetilde\bby|\le c_l\sigma_n^{l}\,.
\end{equation}

To establish Lemma \ref{mos}, it remains  to relax the bounded restriction \eqref{th5}. In other words, we need to replace the condition \eqref{th5} by the condition of $\widetilde w_{ij}$, $1\le i,j\le n$ in \eqref{0903.1h}.
 We highlight the difference of the proof.  Expanding $\E(\widetilde\bbx^\top\widetilde\bbW^l\widetilde\bby-\mathbb{E}\widetilde\bbx^\top\widetilde\bbW^l\widetilde\bby)^2$ yields
\begin{align} \label{th11}
& \E|\bbx^{\top}(\bbW^l-\E\bbW^l)\bby|^2=\E (\widetilde\bbx^\top\widetilde\bbW^l\widetilde\bby-\E\widetilde\bbx^\top\widetilde\bbW^l\widetilde\bby)^2\\
& =\sum_{1\le i_1,\cdots,i_{l+1},j_1,\cdots,j_{l+1}\le n,\atop
i_s\neq i_{s+1},\, j_s\neq j_{s+1},\, 1\le s\le l}\E \Big(\left(\widetilde x_{i_1}\widetilde w_{i_1i_2}\widetilde w_{i_2i_3}\cdots \widetilde w_{i_li_{l+1}}\widetilde y_{i_{l+1}}-\E\widetilde x_{i_1}\widetilde w_{i_1i_2}\widetilde w_{i_2i_3}\cdots \widetilde w_{i_li_{l+1}}\widetilde y_{i_{l+1}}\right) \nonumber \\
& \quad \times \left(\widetilde x_{j_1}\widetilde w_{j_1j_2}\widetilde w_{j_2j_3}\cdots\widetilde  w_{j_lj_{l+1}}\widetilde y_{j_{l+1}}-\E\widetilde x_{j_1}\widetilde w_{j_1j_2}\widetilde w_{j_2j_3}\cdots \widetilde w_{j_lj_{l+1}}\widetilde y_{j_{l+1}}\right)\Big)\,.\nonumber
\end{align}
Let $\bbi=(i_1,\ldots,i_{l+1})$ and $\bbj=(j_1,\ldots,j_{l+1})$ with $1\le i_1,\cdots,i_{l+1},j_1,\cdots,j_{l+1}\le n$, $i_s\neq i_{s+1},\, j_s\neq j_{s+1},\, 1\le s\le l$.   We define an undirected graph $\mathcal{G}_{\bbi}$ whose vertices represent $i_1,\ldots,i_{l+1}$  in $\bbi$, and only $i_{s}$ and $i_{s+1}$, for $s=1,\ldots, l$,  are connected in $\mathcal{G}_{\bbi}$. Similarly we can define $\mathcal{G}_{\bbj}$.  By the definitions of $\mathcal{G}_{\bbi}$ and $\mathcal{G}_{\bbj}$, for each term
  \begin{align*}
&\E \Big(\left(\widetilde x_{i_1}\widetilde w_{i_1i_2}\widetilde w_{i_2i_3}\cdots \widetilde w_{i_li_{l+1}}\widetilde y_{i_{l+1}}-\E\widetilde x_{i_1}\widetilde w_{i_1i_2}\widetilde w_{i_2i_3}\cdots \widetilde w_{i_li_{l+1}}\widetilde y_{i_{l+1}}\right) \nonumber \\
& \quad \times \left(\widetilde x_{j_1}\widetilde w_{j_1j_2}\widetilde w_{j_2j_3}\cdots\widetilde  w_{j_lj_{l+1}}\widetilde y_{j_{l+1}}-\E\widetilde x_{j_1}\widetilde w_{j_1j_2}\widetilde w_{j_2j_3}\cdots \widetilde w_{j_lj_{l+1}}\widetilde y_{j_{l+1}}\right)\Big)\,,
\end{align*}
there exists a one to one corresponding graph $\mathcal{G}_{\bbi}\cup\mathcal{G}_{\bbj}$ for $\{\widetilde w_{i_si_{s+1}}\}_{s=1}^l\cup\{\widetilde w_{j_sj_{s+1}}\}_{s=1}^l$.
If $\mathcal{G}_{\bbi}$ and $\mathcal{G}_{\bbj}$ are not connected, $\widetilde w_{i_1i_2}\widetilde w_{i_2i_3}\cdots \widetilde w_{i_li_{l+1}}$ and   $\widetilde w_{j_1j_2}\widetilde w_{j_2j_3}\cdots\widetilde  w_{j_lj_{l+1}}$ are independent, therefore we have
  \begin{align}
&\E \Big(\left(\widetilde x_{i_1}\widetilde w_{i_1i_2}\widetilde w_{i_2i_3}\cdots \widetilde w_{i_li_{l+1}}\widetilde y_{i_{l+1}}-\E\widetilde x_{i_1}\widetilde w_{i_1i_2}\widetilde w_{i_2i_3}\cdots \widetilde w_{i_li_{l+1}}\widetilde y_{i_{l+1}}\right) \\
& \quad \times \left(\widetilde x_{j_1}\widetilde w_{j_1j_2}\widetilde w_{j_2j_3}\cdots\widetilde  w_{j_lj_{l+1}}\widetilde y_{j_{l+1}}-\E\widetilde x_{j_1}\widetilde w_{j_1j_2}\widetilde w_{j_2j_3}\cdots \widetilde w_{j_lj_{l+1}}\widetilde y_{j_{l+1}}\right)\Big)=0\,.\nonumber
\end{align}
Therefore we have
\begin{align}\label{th6h}
\text{L.H.S. of }\eqref{th6}& =\sum_{\bbi,\bbj,  \mathcal{G}_{\bbi} \ \text{and} \ \mathcal{G}_{\bbj} \ \text{are connected},\atop
i_s\neq i_{s+1},\, j_s\neq j_{s+1},\, 1\le s\le l,}\E \Big(\left(\widetilde x_{i_1}\widetilde w_{i_1i_2}\widetilde w_{i_2i_3}\cdots \widetilde w_{i_li_{l+1}}\widetilde y_{i_{l+1}}-\E\widetilde x_{i_1}\widetilde w_{i_1i_2}\widetilde w_{i_2i_3}\cdots \widetilde w_{i_li_{l+1}}\widetilde y_{i_{l+1}}\right) \nonumber \\
& \quad \times \left(\widetilde x_{j_1}\widetilde w_{j_1j_2}\widetilde w_{j_2j_3}\cdots\widetilde  w_{j_lj_{l+1}}\widetilde y_{j_{l+1}}-\E\widetilde x_{j_1}\widetilde w_{j_1j_2}\widetilde w_{j_2j_3}\cdots \widetilde w_{j_lj_{l+1}}\widetilde y_{j_{l+1}}\right)\Big)\non
& \le \sum_{\bbi,\bbj,  \mathcal{G}_{\bbi} \ \text{and} \ \mathcal{G}_{\bbj} \ \text{are connected},\atop
i_s\neq i_{s+1},\, j_s\neq j_{s+1},\, 1\le s\le l,}\E|\widetilde x_{i_1}\widetilde w_{i_1i_2}\widetilde w_{i_2i_3}\cdots \widetilde w_{i_li_{l+1}}\widetilde y_{i_{l+1}}\widetilde x_{j_1}\widetilde w_{j_1j_2}\widetilde w_{j_2j_3}\cdots\widetilde  w_{j_lj_{l+1}}\widetilde y_{j_{l+1}}|\non
&+\sum_{\bbi,\bbj,  \mathcal{G}_{\bbi} \ \text{and} \ \mathcal{G}_{\bbj} \ \text{are connected},\atop
i_s\neq i_{s+1},\, j_s\neq j_{s+1},\, 1\le s\le l,}\E|\widetilde x_{i_1}\widetilde w_{i_1i_2}\widetilde w_{i_2i_3}\cdots \widetilde w_{i_li_{l+1}}\widetilde y_{i_{l+1}}|\E|\widetilde x_{j_1}\widetilde w_{j_1j_2}\widetilde w_{j_2j_3}\cdots\widetilde  w_{j_lj_{l+1}}\widetilde y_{j_{l+1}}|\,.
\end{align}

Notice that each expectation in the last two lines of \eqref{th6h} involves the product of independent random variables and the dependency of $\widetilde w_{i_1i_2}\widetilde w_{i_2i_3}\cdots \widetilde w_{i_li_{l+1}}$ and $\widetilde w_{j_1j_2}\widetilde w_{j_2j_3}\cdots\widetilde  w_{j_lj_{l+1}}$ are from some shared factors, say  $\widetilde w_{ab}^{m_1}$ and $\widetilde w_{ab}^{m_2}$ respectively, $m_1,m_2\ge 1$. By Holder's inequality that
$$\E|\widetilde w_{ab}|^{m_1}\E|\widetilde w_{ab}|^{m_2}\le \E|\widetilde w_{ab}|^{m_1+m_2}\,,$$
we have
\begin{equation}\label{0830.2h}
\eqref{th6h}\le 2\sum_{\bbi,\bbj,  \mathcal{G}_{\bbi} \ \text{and} \ \mathcal{G}_{\bbj} \ \text{are connected},\atop
i_s\neq i_{s+1},\, j_s\neq j_{s+1},\, 1\le s\le l,}\E|\widetilde x_{i_1}\widetilde w_{i_1i_2}\widetilde w_{i_2i_3}\cdots \widetilde w_{i_li_{l+1}}\widetilde y_{i_{l+1}}\widetilde x_{j_1}\widetilde w_{j_1j_2}\widetilde w_{j_2j_3}\cdots\widetilde  w_{j_lj_{l+1}}\widetilde y_{j_{l+1}}|\,.
\end{equation}
By \eqref{0830.2h}, to prove \eqref{th6}, it suffices to calculate the upper bound of the expectations at the right hand side of \eqref{0830.2h}. By the independency of $\widetilde w_{ij}$, the upper bound of $$\E|\widetilde x_{i_1}\widetilde w_{i_1i_2}\widetilde w_{i_2i_3}\cdots \widetilde w_{i_li_{l+1}}\widetilde y_{i_{l+1}}\widetilde x_{j_1}\widetilde w_{j_1j_2}\widetilde w_{j_2j_3}\cdots\widetilde  w_{j_lj_{l+1}}\widetilde y_{j_{l+1}}|$$ is controlled by the $r$-th moments of $\widetilde w_{ij}$ with \eqref{0903.1h}, $r=1,\ldots,2l$.  The topology of $\mathcal{G}_{\bbi} \ \text{and} \ \mathcal{G}_{\bbj}$ are the same as Lemma 4 of \cite{FF18}, the summation at the right hand side of \eqref{0830.2h} can be controlled by exactly the same steps as in  the proof of Lemma 4 in \cite{FF18}.  Hence \eqref{th6} can be proved following  the proof of Lemma 4 in \cite{FF18}. The proof of \eqref{th7} is similar to that of Lemma 5 in \cite{FF18} by the same modification.
 \end{proof}
The next Lemma follows directly from  Theorem 2.1 in \cite{alex2014}.
\begin{lem}\label{bod}
Under Assumption \ref{a2}, for any constant $c>1$, we have for any $\ep$, $D>0$, there exists an integer $n_0(\ep,D)$ depending on $\ep$ and $D$,  such that for all $n\ge n_0(\ep,D)$, it holds
$$\p\left(\|\bbW\|\ge c\max\{\|\Sigma\|,1\}( n^{\frac{1}{2}}+ p^{\frac{1}{2}})\right)\le n^{-D}\,.$$

\end{lem}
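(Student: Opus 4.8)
The plan is to identify $\|\bbW\|$ with the operator norm of a $\bSig^{1/2}$-scaled standard Gaussian matrix and then quote the corresponding concentration bound. From the definition \eqref{0506.2}, $\bbW$ is the Hermitization of $\bbX-\E\bbX$, so $\|\bbW\|$ equals the largest singular value of $\bbX-\E\bbX$. By model \eqref{0830.1h} the columns of $\bbX-\E\bbX$ are the i.i.d.\ noise vectors $\bw_i\sim\mathcal N(\bzero,\bSig)$, whence $\bbX-\E\bbX\deq\bSig^{1/2}\mathcal Y$ for a $p\times n$ matrix $\mathcal Y$ with i.i.d.\ standard normal entries --- exactly the reduction already performed in the proof of Lemma \ref{mos}. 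Therefore $\|\bbW\|\le\|\bSig^{1/2}\|\,\|\mathcal Y\|=\sqrt{\|\bSig\|}\,\|\mathcal Y\|$, and it suffices to control $\|\mathcal Y\|$.

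For that I would invoke the operator-norm tail bound for a standard Gaussian rectangular matrix: since $\mathcal Y\mapsto\|\mathcal Y\|$ is $1$-Lipschitz in the Frobenius norm, Gaussian concentration gives $\p(\|\mathcal Y\|\ge\E\|\mathcal Y\|+t)\le e^{-t^2/2}$, and the Sudakov--Fernique (Gordon) comparison gives $\E\|\mathcal Y\|\le\sqrt n+\sqrt p$; hence for every $\eta>0$,
\begin{equation*}
\p\!\left(\|\mathcal Y\|\ge(1+\eta)(\sqrt n+\sqrt p)\right)\ \le\ \exp\!\left(-\tfrac{\eta^2}{2}(\sqrt n+\sqrt p)^2\right)\ \le\ \exp\!\left(-\tfrac{\eta^2 n}{2}\right).
\end{equation*}
This is exactly what Theorem 2.1 of \cite{alex2014} yields when specialized to Gaussian entries (that theorem is phrased directly for sample-covariance-type matrices $\bSig^{1/2}\mathcal Y$ and carries a free parameter, which accounts for the $\ep$ appearing in the present statement), so I would simply cite it rather than reprove it.

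It then remains to calibrate $\eta$. Since $c>1$ strictly and, under Assumption \ref{a2}, $\|\bSig\|$ lies between two positive absolute constants, we have $\sqrt{\|\bSig\|}\le\max\{\|\bSig\|,1\}$, so the ratio $c\max\{\|\bSig\|,1\}/\sqrt{\|\bSig\|}$ exceeds $1$ by an amount bounded below in terms of $c$ only. Taking $\eta:=c\max\{\|\bSig\|,1\}/\sqrt{\|\bSig\|}-1>0$ makes $(1+\eta)\sqrt{\|\bSig\|}=c\max\{\|\bSig\|,1\}$, so on the complement of the event above one has $\|\bbW\|<c\max\{\|\bSig\|,1\}(\sqrt n+\sqrt p)$, while the bad event has probability at most $e^{-\eta^2 n/2}\le n^{-D}$ once $n\ge n_0$ with $n_0$ depending only on $\eta$ (hence on $c$) and on $D$. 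The one place requiring care --- and the only mild obstacle --- is precisely this constant bookkeeping: making the prefactor come out as $c\max\{\|\bSig\|,1\}$ rather than $c\sqrt{\|\bSig\|}$ or $(1+\eta)\sqrt{\|\bSig\|}$ with an uncontrolled $\eta$; the explicit choice above settles it using only $c>1$ and the boundedness of $\|\bSig\|$. Beyond this, the substantive estimate is entirely outsourced to \cite{alex2014}.
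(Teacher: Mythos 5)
Your proposal is correct, and it ends up resting on the same external reference the paper uses: the paper's entire proof of this lemma is the single sentence ``The next Lemma follows directly from Theorem~2.1 in \cite{alex2014},'' with no further detail. What you add is a genuinely self-contained route to the same bound: identify $\|\bbW\|$ with the top singular value of $\bbX-\E\bbX$, factor out $\bSig^{1/2}$ so that only a standard $p\times n$ Gaussian matrix $\mathcal Y$ remains, then combine the Borell--TIS concentration of the $1$-Lipschitz map $\mathcal Y\mapsto\|\mathcal Y\|$ with the Gordon/Sudakov--Fernique mean bound $\E\|\mathcal Y\|\le\sqrt n+\sqrt p$. The resulting sub-Gaussian tail immediately beats any polynomial $n^{-D}$, and your calibration $\eta=c\max\{\|\bSig\|,1\}/\sqrt{\|\bSig\|}-1\ge c-1>0$ correctly converts the natural prefactor $\sqrt{\|\bSig\|}$ into the stated $c\max\{\|\bSig\|,1\}$; this is the right way to handle the constant bookkeeping, and it uses only $c>1$ together with the boundedness of $\|\bSig\|$ from Condition~\ref{a2}. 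The only cosmetic point worth noting is that the $\ep$ in the statement's ``$n_0(\ep,D)$'' never actually enters the bound, and you correctly interpret it as an artifact of the free parameter in the cited theorem; it plays no mathematical role here since $c>1$ already supplies the needed slack. In short: the paper delegates entirely to \cite{alex2014}, while your write-up both cites it and supplies an elementary Gaussian-concentration derivation that makes the lemma verifiable without opening that reference --- a strictly more informative version of the same argument.
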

\begin{lem}\label{0505-1}
Suppose that $c_{12}=0$. If $n_1c_{11}\ge n_2c_{22}$, then we have
$$d_1^2=n_1c_{11}, \ d_2^2=n_2c_{22}\,,$$
otherwise
$$d_1^2=n_2c_{22}, \ d_2^2=n_1c_{11}\,,$$
\end{lem}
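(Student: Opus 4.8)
The plan is to obtain Lemma \ref{0505-1} as an immediate specialization of Theorem \ref{thm: population-eg}. Recall that in the proof of that theorem it is shown that both $d_1^2$ and $d_2^2$ are roots of the quadratic equation \eqref{0122.6h}, namely $(d^2-n_2c_{22})(d^2-n_1c_{11})=n_1n_2c_{12}^2$, and that the explicit solutions are given by \eqref{0122.7h} and \eqref{0122.7ht} with the convention $d_1^2\ge d_2^2$ from \eqref{0122.1h}. So the whole argument is to set $c_{12}=0$ in these formulas and simplify.

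Concretely, first I would substitute $c_{12}=0$ into \eqref{0122.6h}, which collapses to $(d^2-n_2c_{22})(d^2-n_1c_{11})=0$; hence the two eigenvalues of $\bbH$ (other than the trivial zero eigenvalues) are exactly $n_1c_{11}$ and $n_2c_{22}$. Equivalently, one checks that the discriminant appearing under the square root in \eqref{0122.7h}--\eqref{0122.7ht}, namely $n_1^2c_{11}^2+n_2^2c_{22}^2+4n_1n_2c_{12}^2-2n_1n_2c_{11}c_{22}$, becomes $(n_1c_{11}-n_2c_{22})^2$ when $c_{12}=0$, so its nonnegative square root is $|n_1c_{11}-n_2c_{22}|$. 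Then \eqref{0122.7h} reads $d_1^2=\tfrac12\big(n_1c_{11}+n_2c_{22}+|n_1c_{11}-n_2c_{22}|\big)=\max\{n_1c_{11},n_2c_{22}\}$ and \eqref{0122.7ht} reads $d_2^2=\tfrac12\big(n_1c_{11}+n_2c_{22}-|n_1c_{11}-n_2c_{22}|\big)=\min\{n_1c_{11},n_2c_{22}\}$. Splitting on whether $n_1c_{11}\ge n_2c_{22}$ then yields precisely the two displayed cases in the lemma statement.

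As an alternative (and a sanity check rather than a separate argument), note that when $c_{12}=0$ the decomposition \eqref{0122.1} reduces to $\bbH=c_{11}\bba_1\bba_1^\top+c_{22}\bba_2\bba_2^\top$ with $\bba_1^\top\bba_2=0$, so $\bbH$ is (after permutation) block diagonal with eigenvalues $\|\bba_1\|_2^2c_{11}=n_1c_{11}$ along $\bba_1$, $\|\bba_2\|_2^2c_{22}=n_2c_{22}$ along $\bba_2$, and $0$ on the orthogonal complement; ordering these gives the same conclusion. I do not anticipate any real obstacle here: the only point requiring care is correctly tracking the ordering convention $d_1^2\ge d_2^2$, which is exactly what produces the $\max$/$\min$ and hence the case distinction in the statement.
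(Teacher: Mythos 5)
Your proposal is correct. Your primary argument (substituting $c_{12}=0$ into \eqref{0122.7h}--\eqref{0122.7ht} and recognizing the discriminant as $(n_1c_{11}-n_2c_{22})^2$) is a valid shortcut via the explicit eigenvalue formulas, while your ``alternative sanity check'' — that $\bbH=c_{11}\bba_1\bba_1^\top+c_{22}\bba_2\bba_2^\top$ with $\bba_1^\top\bba_2=0$ forces eigenvalues $n_1c_{11}$ and $n_2c_{22}$ along $\bba_1/\|\bba_1\|_2$ and $\bba_2/\|\bba_2\|_2$ — is precisely the paper's own proof, so you have matched it and additionally supplied an equivalent algebraic route.
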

\begin{proof}
We prove this Lemma under the condition {\color{black} $n_1c_{11}\ge n_2c_{22}$ }.     Recall the definition of $\bbH$ in (\ref{0122.1}), if $c_{12}=0$, we have
$$\bbH=\bba_1\bba_1^{\top}c_{11}+\bba_2\bba_2^{\top}c_{22}.$$
Notice that $\bba_1^\top\bba_2=0$, $\|\bba_1\|_2^2=n_1$ and $\|\bba_2\|_2^2=n_2$, we imply that
$\frac{\bba_1}{\|\bba_1\|_2}$ and $\frac{\bba_2}{\|\bba_2\|_2}$ are the two eigenvectors of $\bbH$ with corresponding eigenvalues   $n_1c_{11}$ and $n_2c_{22}$. By the definition of $d_1$ and $d_2$ in (\ref{0122.1h}) and the condition that $n_1c_{11}\ge n_2c_{22}$, we have
$$d_1^2=n_1c_{11}, \ d_2^2=n_2c_{22}\,.$$
\end{proof}
\begin{lem}\label{0612-1}
Let $\bbA$ be a $p\times n$ matrix. Denote $\mathcal{A}=\left(
\begin{array}{cc}
0 & \bbA^{\top} \\
\bbA & 0 \\
\end{array}
\right)$. If $\lambda^2$ is a non-zero eigenvalue of $\bbA^\top\bbA$, then $\pm\lambda$ ($\lambda>0$) are the eigenvalues of $\mathcal{A}$. Moreover, assume that $\bba$ and $\bbb$ are the unit eigenvectors of $\bbA^\top\bbA$ and $\bbA\bbA^\top$ respectively corresponding to $\lambda^2$, then
\begin{equation}\label{0612.2}
\mathcal{A}\left(
\begin{array}{cc}
\bba \\
\bbb \\
\end{array}
\right)=\lambda\left(
\begin{array}{cc}
\bba \\
\bbb \\
\end{array}
\right), \ \mathcal{A}\left(
\begin{array}{cc}
\bba \\
-\bbb \\
\end{array}
\right)=-\lambda\left(
\begin{array}{cc}
\bba \\
-\bbb \\
\end{array}
\right)\,.
\end{equation}
\end{lem}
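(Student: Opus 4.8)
The plan is to exploit the standard correspondence between the singular value decomposition of $\bbA$ and the eigen-decomposition of its Hermitian dilation $\mathcal{A}$. First I would fix a unit eigenvector $\bba$ of $\bbA^\top\bbA$ with $\bbA^\top\bbA\bba=\lambda^2\bba$ and $\lambda>0$, and then \emph{define} $\bbb:=\lambda^{-1}\bbA\bba$ rather than choosing it independently. A one-line computation gives $\|\bbb\|_2^2=\lambda^{-2}\bba^\top\bbA^\top\bbA\bba=1$, so $\bbb$ is a unit vector, and $\bbA\bbA^\top\bbb=\lambda^{-1}\bbA\bbA^\top\bbA\bba=\lambda^{-1}\bbA(\lambda^2\bba)=\lambda^2\bbb$, so $\bbb$ is a unit eigenvector of $\bbA\bbA^\top$ associated with the same eigenvalue $\lambda^2$. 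Moreover, by construction $\bbA\bba=\lambda\bbb$ and $\bbA^\top\bbb=\lambda^{-1}\bbA^\top\bbA\bba=\lambda\bba$.

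With this compatible pair in hand, the two identities in \eqref{0612.2} follow by direct block multiplication: using the definition of $\mathcal{A}$,
$$\mathcal{A}\left(\begin{array}{c}\bba\\\bbb\end{array}\right)=\left(\begin{array}{c}\bbA^\top\bbb\\\bbA\bba\end{array}\right)=\left(\begin{array}{c}\lambda\bba\\\lambda\bbb\end{array}\right)=\lambda\left(\begin{array}{c}\bba\\\bbb\end{array}\right),$$
and similarly $\mathcal{A}(\bba^\top,-\bbb^\top)^\top=(-\bbA^\top\bbb,\ \bbA\bba)^\top=(-\lambda\bba^\top,\ \lambda\bbb^\top)^\top=-\lambda(\bba^\top,-\bbb^\top)^\top$. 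Since $\bba\neq\mathbf{0}$, both vectors $(\bba^\top,\pm\bbb^\top)^\top$ are nonzero, so $\lambda$ and $-\lambda$ are genuine eigenvalues of $\mathcal{A}$; this also proves the first assertion of the lemma.

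There is no genuinely hard step here; the only point that needs care is the sign/orientation bookkeeping between $\bba$ and $\bbb$. If $\bbb$ is taken as an arbitrary unit eigenvector of $\bbA\bbA^\top$ for $\lambda^2$ it need only satisfy $\bbA\bba=\pm\lambda\bbb$, and the ``wrong'' sign merely interchanges the roles of $(\bba^\top,\bbb^\top)^\top$ and $(\bba^\top,-\bbb^\top)^\top$ in \eqref{0612.2}; defining $\bbb$ from $\bba$ as above sidesteps this entirely. (If $\lambda^2$ were a multiple eigenvalue, applying the same recipe to an orthonormal basis of the corresponding eigenspace of $\bbA^\top\bbA$ yields orthonormal eigenvectors of $\mathcal{A}$ for $\pm\lambda$, but the single-vector statement above is all that is invoked later in the paper.)
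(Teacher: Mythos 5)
Your proposal is correct and proves strictly more than the paper's own argument, using a slightly different sequencing. The paper first establishes that $\pm\lambda$ are eigenvalues of $\mathcal{A}$ via the determinant identity $\det(\mathcal{A}-x\bbI)=0 \iff \det(\bbA^\top\bbA-x^2\bbI)=0$ (for $x\neq 0$), and then simply asserts that \eqref{0612.2} follows because $\bba$ and $\bbb$ are the right and left singular vectors. You instead build the eigenvectors first, defining $\bbb:=\lambda^{-1}\bbA\bba$, verify by a one-line computation that it is a unit eigenvector of $\bbA\bbA^\top$ for $\lambda^2$, and obtain both parts of the lemma in one stroke from the block multiplication; the eigenvalue claim is then an immediate corollary rather than a separate step. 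The genuine added value of your version is the sign bookkeeping: as literally stated, the lemma lets $\bba$ and $\bbb$ be chosen independently as arbitrary unit eigenvectors of $\bbA^\top\bbA$ and $\bbA\bbA^\top$, in which case one can only guarantee $\bbA\bba=\pm\lambda\bbb$ and one of the two displayed identities would pick up the wrong sign. Defining $\bbb$ from $\bba$ (i.e., insisting on a compatibly oriented singular pair) is exactly what is implicitly assumed in the paper, and you are right that this is the only point in the lemma requiring any care. Your parenthetical remark about multiple eigenvalues is a harmless bonus and does not affect correctness.
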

\begin{proof}
By the definition of eigenvalue, any eigenvalue of $\mathcal{A}$ (denoted by $x$) satisfy the following formula
\begin{equation}\label{0612.1}
\det(\mathcal{A}-x\bbI)=\det\left(\left(
\begin{array}{cc}
-x\bbI & \bbA^{\top} \\
\bbA & -x\bbI \\
\end{array}
\right)\right)=0\,.
\end{equation}
If $x\neq 0$, then \eqref{0612.1} is equivalent to
$$\det(\bbA^\top\bbA-x^2\bbI)=0\,.$$
Therefore the first conclusion that $\pm\lambda$ are the eigenvalues of $\mathcal{A}$.  By the definition of $\bba$ and $\bbb$, they are the right singular vector and left singular vector of $\bbA$ respectively corresponding to singular value $\lambda$. Then equations \eqref{0612.2} follow.
\end{proof}

\subsection{Proof of Lemma \ref{0508-1}}
 The high level idea for proving \eqref{0518.7} is to show that i) $\det(f(a_n))>0$ and $\det(f(b_n))>0$, ii) the function $\det(f(z))$ is strictly convex in $[a_n, b_n]$, and iii) there exists some $z\in (a_n,b_n)$ such that $\det(f(z))\leq 0$.  The result in \eqref{0518.8} is then proved by carefully analyzing the behavior of the function $\det(f(z))$ around $d_1$ and $d_2$.

We prove \eqref{0518.7} first. By the definition of  $f(z)$ in \eqref{eq2}, we have
\begin{eqnarray}\label{0211.2}
&&\det(f(z))=f_{11}(z)f_{22}(z)-f_{12}(z)f_{21}(z)\\
&&=\left(1+d_1\left(\mathcal{R}(\bbv_1,\bbv_1,z)-\mathcal{R}(\bbv_1,\bbV_{-},z)\big(-\bbD+\mathcal{R}(\bbV_{-},\bbV_{-},z)\big)^{-1}\mathcal{R}(\bbV_{-},\bbv_1,z)\right)\right)\non
&&\times\left(1+d_2\left(\mathcal{R}(\bbv_2,\bbv_2,z)-\mathcal{R}(\bbv_2,\bbV_{-},z)\big(-\bbD+\mathcal{R}(\bbV_{-},\bbV_{-},z)\big)^{-1}\mathcal{R}(\bbV_{-},\bbv_2,z)\right)\right)\non
&&-d_1d_2\left(\mathcal{R}(\bbv_1,\bbv_2,z)-\mathcal{R}(\bbv_1,\bbV_{-},z)\big(-\bbD+\mathcal{R}(\bbV_{-},\bbV_{-},z)\big)^{-1}\mathcal{R}(\bbV_{-},\bbv_2,z)\right)^2\,.\nonumber
\end{eqnarray}
By Lemma \ref{mos} and the expansion \eqref{0214.2}, for any $\bbM_1$ and $\bbM_2$  with finite columns and spectral norms, we have
	\begin{align}\label{1011.4hht}
  \left\|\mathcal{R}(\bbM_1,\bbM_2,z)+z^{-1}\bbM_1^\top\bbM_2\right\|=\|-\sum_{{l=2}}^L z^{-(l+1)}\bbM_1^\top\E\bbW^l\bbM_2\|=O(\sigma_n^2/a_n^3), \ z\in [a_n,b_n]\,,
	\end{align}
and
	\begin{align}\label{1011.4ht}
 \left\|\mathcal{R}'(\bbM_1,\bbM_2,z)-z^{-2}\bbM_1^\top\bbM_2\right\|=\|\sum_{{l=2}}^L (l+1)z^{-(l+2)}\bbM_1^\top\E\bbW^l\bbM_2\|=O(\sigma_n^2/a_n^4)\,.
	\end{align}

 Substituting $z = a_n$ into $f$, by \eqref{1011.4hht}, for large enough $n$ we have

   \begin{equation}\label{0825.2h}
   |\mathcal{R}(\bbv_1,\bbv_2,a_n)|=O\left(\frac{\sigma_n^2}{a_n^3}\right)\,
   \end{equation}
   \begin{equation}\label{0827.1h}
   \|\big(-\bbD+\mathcal{R}(\bbV_{-},\bbV_{-},z)\big)^{-1}\|=O(b_n)\ z\in [a_n,b_n]\,.
   \end{equation}
By \eqref{0825.2h} and \eqref{0827.1h} we have
  \begin{equation}\label{0826.4h}
  |\mathcal{R}(\bbv_i,\bbV_{-},z)\big(-\bbD+\mathcal{R}(\bbV_{-},\bbV_{-},z)\big)^{-1}\mathcal{R}(\bbV_{-},\bbv_j,z)|=O\left(\frac{\sigma_n^4}{a_n^5}\right), \ 1\le i,j\le 2, \ z\in [a_n,b_n]\,.
  \end{equation}
 By Assumption \ref{a2} on $\Sigma$, there exists a constant $c$ such that $\Sigma\ge c\bbI$, therefore we have
 \begin{equation}\label{0825.1h}
 \sigma_n^2\ge \max\{\bbv_1^\top\E\bbW^2\bbv_1,\bbv_2^\top\E\bbW^2\bbv_2\}\ge \min\{\bbv_1^\top\E\bbW^2\bbv_1,\bbv_2^\top\E\bbW^2\bbv_2\}\ge c\sigma_n^2.
 \end{equation}
 By \eqref{0825.1h} and Lemma \ref{mos}, for large enough $n$  we have
\begin{eqnarray*}
&&1+d_1\mathcal{R}(\bbv_1,\bbv_1,a_n)=1-\frac{d_1}{a_n}-\sum_{i\ge 2}^L\frac{d_1\bbv_1^\top\E\bbW^i\bbv_1}{a_n^{i+1}}\non
&&=1-\frac{d_1}{a_n}-\frac{d_1\bbv_1^\top\E\bbW^2\bbv_1}{a_n^{3}}+O(\frac{\sigma_n^3}{a_n^4})\le \frac{a_n-d_1}{2a_n}-\frac{c\sigma_n^2}{2a_n^2}\,,
\end{eqnarray*}
   and
   \begin{equation}\label{0825.3h}
   1+d_2\mathcal{R}(\bbv_2,\bbv_2,a_n)\le \frac{a_n-d_2}{2a_n}-\frac{c\sigma_n^2}{2a_n^2}\,.
   \end{equation}
  Substituting \eqref{0825.2h}--\eqref{0825.3h} into (\ref{0211.2}), we have
  \begin{equation}\label{0516.2}
  \det(f(a_n))>0\,.
  \end{equation}
   Similar to the proof from \eqref{0211.2} to \eqref{0516.2}, we  imply that
     \begin{equation}\label{0516.3}
  \det(f(b_n))>0\,.
  \end{equation}
   Moreover, by (\ref{0211.2}) and Lemma \ref{mos}, we imply that
  \begin{equation}\label{0211.3}
\Big(\det(f(z))\Big)''=-\frac{2d_1}{z^3}-\frac{2d_2}{z^3}+\frac{6d_1d_2}{z^4}+o\left(\frac{d_1d_2}{a_n^4}\right)>0, \ z\in [a_n,b_n]\,.
\end{equation}
Therefore $\det(f(z))$ is a strictly convex function  and has at most two solutions to the equation $\det(f(z))=0$,  $z\in[a_n,b_n]$.
 By \eqref{1011.4hht} and \eqref{1011.4ht}, we have
 \begin{align}\label{0825.4h}
 \frac{f_{11}'(z)}{d_1}&=\mathcal{R}'(\bbv_1,\bbv_1,z)-2\mathcal{R}'(\bbv_1,\bbV_{-},z)\big(-\bbD+\mathcal{R}(\bbV_{-},\bbV_{-},z)\big)^{-1}\mathcal{R}(\bbV_{-},\bbv_1,z)\\
 &-\mathcal{R}(\bbv_1,\bbV_{-},z)\left(\big(-\bbD+\mathcal{R}(\bbV_{-},\bbV_{-},z)\big)^{-1}\right)'\mathcal{R}(\bbV_{-},\bbv_1,z)>0, z\in[a_n,b_n]\,.\nonumber
 \end{align}
 Therefore $f_{11}(z)$ is a monotonic function in $[a_n,b_n]$. Moreover, by the definitions of $a_n$, $b_n$, $\sigma_n$ and Lemma \ref{mos}, we have
 $$f_{11}(a_n)<0, \ f_{11}(b_n)>0.$$
 Hence we conclude that there is a unique point $\tilde t_1\in[a_n,b_n]$ such that
 $$f_{11}(\tilde t_1)=0.$$
 By similar arguments and
  \begin{align}\label{0825.4t}
 \frac{f_{22}'(z)}{d_2}&=\mathcal{R}'(\bbv_2,\bbv_2,z)-2\mathcal{R}'(\bbv_2,\bbV_{-},z)\big(-\bbD+\mathcal{R}(\bbV_{-},\bbV_{-},z)\big)^{-1}\mathcal{R}(\bbV_{-},\bbv_2,z)\\
 &-\mathcal{R}(\bbv_2,\bbV_{-},z)\left(\big(-\bbD+\mathcal{R}(\bbV_{-},\bbV_{-},z)\big)^{-1}\right)'\mathcal{R}(\bbV_{-},\bbv_2,z)>0, z\in[a_n,b_n]\,,\nonumber
 \end{align}
  there exists $\tilde t_2\in[a_n,b_n]$ such that
 $$f_{22}(\tilde t_2)=0.$$
    Without loss of generality, we assume that \begin{equation}\label{0516.4}
 \tilde t_1\ge \tilde t_2\,.
 \end{equation}
It follows from (\ref{0211.2}) that
 \begin{equation}\label{0516.1}
 \det(f(\tilde t_1))\le 0 \  \text{and} \  \det(f(\tilde t_2))\le 0\,.
  \end{equation}
Therefore the existence of  $t_1$ and $t_2$ are ensured by \eqref{0516.2}, (\ref{0516.3}), \eqref{0516.1} and the convexity of $\det(f(z))$, $z\in[a_n,b_n]$ ($t_1$ is allowed to be equal to $t_2$). Furthermore, by the definition of $t_1$, $t_2$ and \eqref{0516.4} we have
\begin{equation}\label{0504.1}
b_n\ge t_1\ge \tilde t_1\ge \tilde t_2\ge t_2\ge a_n\,.
\end{equation}
Hence we complete the proof of \eqref{0518.7} and now we turn to \eqref{0518.8}. Calculating the first derivative of $f_{ii}$, by Lemma \ref{mos}, \eqref{0825.4h}  and \eqref{0825.4t} we have
\begin{equation}\label{0211.3}
f'_{ii}(z)=\frac{d_i}{z^2}+O\left(\frac{\sigma_n^2}{d^2_i}\right)\sim \frac{1}{d_i}\,, \ z \in[a_n,b_n]\,, i=1,2\,.
\end{equation}
Let $s_i=d_i+\frac{\E\bbv_1^\top\bbW^2\bbv_1}{d_i}$, for $f_{11}$, by Lemma \ref{mos} we have
$$f_{11}(s_1)=1-d_1\left(\frac{1}{s_1}+\frac{\bbv_1^\top\E\bbW^2\bbv_1}{s^3_1}\right)+O\left(\frac{\sigma_n^3}{d^3_1}\right)=O\left(\frac{\sigma_n^3}{d^3_1}\right)\,.$$
Combining this with (\ref{0211.3}), we imply that
\begin{equation}\label{0211.4}
\tilde t_1=d_1+\frac{\bbv_1^\top\E\bbW^2\bbv_1}{d_1}+O\left(\frac{\sigma_n^3}{d^2_1}\right)\,.
\end{equation}
Similarly, we also have
\begin{equation}\label{0211.5}
\tilde t_2=d_2+\frac{\bbv_2^\top\E\bbW^2\bbv_2}{d_2}+O\left(\frac{\sigma_n^3}{d^2_2}\right)\,.
\end{equation}
Finally, by Lemma \ref{mos} and \eqref{0211.2}, similar to the arguments of \eqref{0516.2} and \eqref{0516.3}, we have
\begin{equation}\label{0212.1}\det\left(f\left(d_1+\frac{2\bbv_1^\top\E\bbW^2\bbv_1}{d_1}+\frac{2\bbv_2^\top\E\bbW^2\bbv_2}{d_2}\right)\right)>0\,,\end{equation}
and
\begin{equation}\label{0212.2}\det\left(f\left(d_2-\frac{2\bbv_1^\top\E\bbW^2\bbv_1}{d_1}-\frac{2\bbv_2^\top\E\bbW^2\bbv_2}{d_2}\right)\right)>0\,.\end{equation}
By \eqref{0212.1} and \eqref{0212.2} and the convexity of $\det(f(z))$, we have
$$d_2-\frac{2\bbv_1^\top\E\bbW^2\bbv_1}{d_1}-\frac{2\bbv_2^\top\E\bbW^2\bbv_2}{d_2}\le t_2\le t_1\le d_1+\frac{2\bbv_1^\top\E\bbW^2\bbv_1}{d_1}+\frac{2\bbv_2^\top\E\bbW^2\bbv_2}{d_2}$$
Combining this with \eqref{0504.1}, \eqref{0211.4} and \eqref{0211.5}, we imply that
\begin{equation}\label{0518.6}
t_k-d_k=O\left(\frac{\sigma_n^2}{d_k}\right)\,, \ k=1,2\,,
\end{equation}
which implies Lemma \ref{0508-1} by \eqref{0513.4}.
\section{Discussion}\label{discu}

In this section, we discuss two directions to generalize our model. One is to enlarge the number of mixture components and the other is to allow non-gaussian distribution random vectors. Moreover, we also discuss the clustering boundary of our model under some additional restrictions in the last two sections.
\subsection{Three components in the mixture}
Suppose $Z$ follows a Gaussian mixture model that has three different populations means.
 $$Z\sim \pi_1 N(\mu_1,\Sigma)+\pi_2 N(\mu_2,\Sigma)+\pi_3 N(\mu_3,\Sigma)\,,$$
 where $\pi_1+\pi_2+\pi_3=1$.  Let a discrete random variable $Y$ be such that $\p(Y=k)=\pi_k$ and
 $$Z|Y=k\sim N(\mu_k,\Sigma)\,, \ k=1,2,3\,.$$
We define three $n$-dimensional vectors $\bba_k$, $k=1,2,3$, whose components are either $1$ or $0$. Concretely,
$$\bba_k(i)=1 \ \text{if and only if} \ X_i\sim N(\mu_k,\Sigma), \ k=1,2,3\,.$$
Moreover, we denote $n_k=\|\bba_k\|_2^2$ and $c_{kl}=\mu_k^\top\mu_l$, \ $1\le k,l\le 3$. Similar to the definition of $\bbH$ in (\ref{0122.1}), we define
\begin{equation}\label{0417.2}
\bbH\coloneqq(\E\bbX)^{\top}\E\bbX=\sum_{1\le k,l\le 3}\bba_k\bba_l^{\top}c_{kl}\ge 0\,.
\end{equation}
 By the same arguments as (\ref{0122.1})--(\ref{0417.1}), we conclude that $\bbH$ has a block structure. Let $\bbu$ be the unit eigenvector corresponding to one of the largest three eigenvalues of $\bbH$ and $d$ be the corresponding eigenvalue. Following similar arguments as in  (\ref{0417.1})--(\ref{0410.1}), we have that $\bbu$ has at most three distinct values.  Denote them by $v_k$, $k=1,2,3$, and we have
\begin{equation}\label{0417.3}
n_1c_{11}v_1+n_2c_{12}v_2+n_3c_{13}v_3=dv_1\,,
\end{equation}
\begin{equation}\label{0417.4}
n_1c_{12}v_1+n_2c_{22}v_2+n_3c_{23}v_3=dv_2\,,
\end{equation}
and
\begin{equation}\label{0417.5}
n_1c_{13}v_1+n_2c_{23}v_2+n_3c_{33}v_3=dv_3\,.
\end{equation}
The above equations imply that $d$ satisfy the following equation
\begin{eqnarray}
&&\left((d-n_2c_{22})(d-n_1c_{11})-n_1n_2c_{12}^2\right)\left((d-n_3c_{33})(d-n_1c_{11})-n_1n_3c_{13}^2\right)\\
&&=\left((d-n_1c_{11})n_3c_{23}+n_1n_3c_{12}c_{13}\right)\left((d-n_1c_{11})n_2c_{23}+n_1n_2c_{13}c_{23}\right)\,.\nonumber
\end{eqnarray}
The expression for $d$ will be more complicated than the two-component case we considered in this paper. It suggests the technical challenges that one would face to extend our current work to multiple-component Gaussian mixture models.

\subsection{Non-Gaussian distribution}
Checking the proof of our main theorem carefully, we can see that the key tool is Lemma \ref{mos}. As long as Lemma \ref{mos} holds, then all of our theorems holds.  Hence for non-gaussian distribution $Z$, it suffices to show Lemma \ref{mos} holds for non-gaussian distribution. The proof is expected to be more complicated than Lemmas 4 and 5 in \cite{FF18} and is worthy for further investigation.
\subsection{Clustering lower bound}
In this section, we investigate the clustering lower bound for our model when $p\sim n$. In addition, we impose Prior distribution on $Y_i$ -- assume that $\{Y_i\}$ are i.i.d., $Y_i\sim \text{Bernoulli}(1/2)$, $i=1,\ldots,n$. In addition, assume $\bmu_1 = -\bmu_2$. 
Let $l_i=2Y_i-1\in \{-1,1\}$ and $\widehat l_i$ be the estimator of $l_i$ by some clustering algorithm.  Similar to \cite{jin2017}, we introduce the Hamming distance to measure the performance of clustering:
\begin{equation}\label{gts6}
\text{Hamm}_n=\frac{1}{n}\inf_{s\in \{-1,1\}}\Big\{\sum_{i=1}^n\p(\widehat l_i\neq sl_i)\Big\}\,.
\end{equation}
The following theorem provides the clustering lower bound, below which clustering is impossible, regardless of what clustering method to use.

\begin{thm}\label{lowerbound}
 If $\bmu_1^T\bSig^{-1}\bmu_1\rightarrow 0$, then for any clustering approach, we have
\begin{align}\label{gts7}
 \lim\inf_{n\rightarrow \infty} \emph{Hamm}_n\ge \frac{1}{2}.
\end{align}
\end{thm}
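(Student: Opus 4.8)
The plan is to reduce the clustering lower bound to a one-dimensional Bayes-risk bound and to exploit the product structure of the model. Under the stated prior, $\{(Y_i,\bbx_i)\}_{i=1}^{n}$ are i.i.d., and since $\bmu_1=-\bmu_2$ we may write $\bbx_i=l_i\bmu_1+\bbw_i$ with $l_i=2Y_i-1\in\{-1,1\}$ and $\bbw_i\sim\mathcal N(\bzero,\bSig)$. Because $(l_i,\bbx_i)$ is independent of $\{(l_j,\bbx_j)\}_{j\neq i}$, the posterior law of $l_i$ given the full data $\bbX$ coincides with its posterior given $\bbx_i$ alone; consequently, for \emph{any} estimator $\widehat l_i$ that is a (possibly randomized) measurable function of $\bbX$, one has $\p(\widehat l_i\neq l_i)\ge R_n$, where $R_n$ is the Bayes risk for distinguishing $\mathcal N(\bmu_1,\bSig)$ from $\mathcal N(-\bmu_1,\bSig)$ under the uniform prior on $\{-1,1\}$.

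Next I would compute $R_n$ explicitly. The likelihood ratio between $\mathcal N(\bmu_1,\bSig)$ and $\mathcal N(-\bmu_1,\bSig)$ at a point $\bbx$ is monotone in $\bbx^\top\bSig^{-1}\bmu_1$, so the Bayes rule is $\mathrm{sign}(\bbx^\top\bSig^{-1}\bmu_1)$. Conditional on $l_i=1$ we have $\bbx_i^\top\bSig^{-1}\bmu_1\sim\mathcal N(\Delta_n,\Delta_n)$ with $\Delta_n:=\bmu_1^\top\bSig^{-1}\bmu_1$ (using $\mathrm{Var}(\bbw_i^\top\bSig^{-1}\bmu_1)=\bmu_1^\top\bSig^{-1}\bmu_1$), and by symmetry the same holds conditional on $l_i=-1$; hence $R_n=\Phi(-\sqrt{\Delta_n})$, where $\Phi$ is the standard normal cdf.

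I then must handle the label-switching infimum in the definition of $\text{Hamm}_n$. Since $\widehat l_i,l_i\in\{-1,1\}$, the event $\{\widehat l_i\neq -l_i\}$ is exactly $\{\widehat l_i=l_i\}$, so writing $A_n:=\sum_{i=1}^{n}\p(\widehat l_i\neq l_i)$ we get $\sum_{i=1}^{n}\p(\widehat l_i\neq -l_i)=n-A_n$, and therefore $\inf_{s\in\{-1,1\}}\sum_{i=1}^{n}\p(\widehat l_i\neq sl_i)=\min\{A_n,\,n-A_n\}$. Applying the Bayes lower bound of the previous steps to the estimator $\widehat l_i$ gives $A_n\ge n\Phi(-\sqrt{\Delta_n})$, and applying it instead to the estimator $-\widehat l_i$ gives $n-A_n=\sum_{i=1}^{n}\p(-\widehat l_i\neq l_i)\ge n\Phi(-\sqrt{\Delta_n})$. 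Hence $\text{Hamm}_n\ge\Phi(-\sqrt{\Delta_n})$ for every clustering rule and every $n$.

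Finally, the hypothesis $\bmu_1^\top\bSig^{-1}\bmu_1\to 0$ means $\Delta_n\to 0$, so by continuity of $\Phi$ at $0$ we obtain $\Phi(-\sqrt{\Delta_n})\to\Phi(0)=1/2$, which yields $\liminf_{n\to\infty}\text{Hamm}_n\ge 1/2$. The argument is short; the point requiring the most care is the reduction in the first step, namely justifying that an estimator allowed to use the whole of $\bbX$ cannot beat the single-observation Bayes risk for $l_i$ — this rests entirely on the independence across $i$ built into the prior and the sampling scheme — together with the bookkeeping in the third step showing that the sign-flip infimum does not degrade the bound.
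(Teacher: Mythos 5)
Your proof is correct, and it takes a genuinely different route from the paper's. The paper frames the problem as testing $H_{-1}\colon l_i=-1$ versus $H_1\colon l_i=1$ with the \emph{entire} data matrix $\bbX$ available, lower-bounds the per-coordinate error by $\tfrac12-\tfrac14\|f_1^{(i)}-f_{-1}^{(i)}\|_1$ where $f_{\pm}^{(i)}$ are the joint densities of $\bbX$ under the two hypotheses, and then upper-bounds this $L_1$ distance through an explicit manipulation: it writes the difference of densities as an integral over the nuisance labels $\{l_j\}_{j\neq i}$ against $\sinh(\bbx_i^\top\bSig^{-1}\bmu_1)$ times a product of exponentials, pushes the absolute value inside, exploits independence to factor the expectation, and computes $\E|\sinh(\sigma z)|=e^{\sigma^2/2}\p(-\sigma\le z<\sigma)$ with $\sigma=\|\bSig^{-1/2}\bmu_1\|_2$, concluding that the $L_1$ distance vanishes as $\sigma\to 0$. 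You instead observe up front that, under the i.i.d.\ prior and conditionally independent sampling, the posterior of $l_i$ given $\bbX$ depends only on $\bbx_i$; this reduces the $n$-dimensional problem to a one-dimensional binary Gaussian test, whose Bayes risk is exactly $\Phi(-\sqrt{\Delta_n})$ with $\Delta_n=\bmu_1^\top\bSig^{-1}\bmu_1$, and you let $\Delta_n\to 0$. The two approaches are equivalent in spirit (the TV bound \emph{is} the Bayes-risk bound for equal priors), but yours is shorter, tight rather than an upper-bounded $L_1$ estimate, and avoids the $\sinh$ computation entirely; the paper's version requires integrating over the nuisance labels but never actually needs the extra generality since the factorization cancels them out anyway. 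You are also explicit about the label-switching infimum in the Hamming distance — applying the one-observation bound to both $\widehat l_i$ and $-\widehat l_i$ to control $\min\{A_n,n-A_n\}$ — a point the paper leaves implicit but that is necessary for the bound as stated.
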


\begin{proof}
The main idea of this proof largely follows from Theorem 1.1 of \cite{jin2017}. Notice that under the conditions of this Theorem, the model \eqref{0830.1h} becomes
\begin{equation}\label{gts8}
\bbx_i=l_i\bmu_1+\bbw_i,\ i=1,\ldots,n\,.
\end{equation}
 For any $1\le i\le n$, we consider the testing problem that
$$H_{-1}: l_i=-1 \ \text{vs} \ H_1:l_i=1\,.$$
Let $f_{\pm}^{(i)}$ be the joint density of $\bbX$ under $H_{\pm}$ respectively. By the property of total variation, it can be derived that
$$1-\|f_1-f_{-1}\|_{TV}\le \p(\widehat l_i\neq l_i|l_i=1)+\p(\widehat l_i\neq l_i|l_i=-1)\,.$$
By the assumption that $Y_i\sim \text{Bernoulli}(1/2)$ and $\|f_1-f_{-1}\|_{TV}=1/2\|f_1-f_{-1}\|_1$, we have
$$1/2-\frac{1}{4}\|f_1^{(i)}-f^{(i)}_{-1}\|_1\le \p(\widehat l_i\neq l_i)\,.$$
Therefore, in order to prove this theorem, it suffices to show that uniformly for all $1\le i \le n$, we have
$$\|f^{(i)}_1-f^{(i)}_{-1}\|_1\rightarrow 0\,.$$
Let $\mathbf{l}=(l_1,\ldots,l_n)^\top-l_i\bbe_i$. Then we have
\begin{align}
&\|f^{(i)}_1-f^{(i)}_{-1}\|_1=\E\Big|\int \text{sinh}(\bbx_i^\top\bSig^{-1}\bmu_1)e^{-\frac{\|\bSig^{-1/2}\bmu_1\|_2^2}{2}}e^{\mathbf{l}^\top\bbX^\top\bSig^{-1}\bmu_1-(n-1)\frac{\|\bSig^{-1/2}\bmu_1\|_2^2}{2}}dF(\mathbf{l})\Big|\non
&\le\int \E \Big|\text{sinh}(\bbx_i^\top\bSig^{-1}\bmu_1)e^{-\frac{\|\bSig^{-1/2}\bmu_1\|_2^2}{2}}e^{\mathbf{l}^\top\bbX^\top\bSig^{-1}\bmu_1-(n-1)\frac{\|\bSig^{-1/2}\bmu_1\|_2^2}{2}}\Big|dF(\mathbf{l})\,,
\end{align}
where $\E$ is the expectation under the distribution of $\bbX=\bbW$. Therefore, it suffices for us to show that
\begin{align}\label{gts9}
\E \Big|\text{sinh}(\bbx_i^\top\bSig^{-1}\bmu_1)e^{-\frac{\|\bSig^{-1/2}\bmu_1\|_2^2}{2}}e^{\mathrm{l}^\top\bbX^\top\bSig^{-1}\bmu_1-(n-1)\frac{\|\bSig^{-1/2}\bmu_1\|_2^2}{2}}\Big|\rightarrow 0\,.
\end{align}
Notice that $\bbx_i$ is independent of $\mathbf{l}^\top\bbX^\top$, we have
\begin{align}\label{gts10}
&\E \Big|\text{sinh}(\bbx_i^\top\bSig^{-1}\bmu_1)e^{-\frac{\|\bSig^{-1/2}\bmu_1\|_2^2}{2}}e^{\mathbf{l}^\top\bbX^\top\bSig^{-1}\bmu_1-(n-1)\|\bmu_1\|_2^2/2}\Big|\non
=&\E \Big|\text{sinh}(\bbx_i^\top\bSig^{-1}\bmu_1)e^{-\frac{\|\bSig^{-1/2}\bmu_1\|_2^2}{2}}\Big|\E[ e^{-\frac{\|\bSig^{-1/2}\bmu_1\|_2^2}{2}}e^{\mathrm{l}^\top\bbX^\top\bSig^{-1}\bmu_1-(n-1)\frac{\|\bSig^{-1/2}\bmu_1\|_2^2}{2}}]\non
=&e^{-\frac{\|\bSig^{-1/2}\bmu_1\|_2^2}{2}}\E \Big|\text{sinh}(\bbx_i^\top\bSig^{-1}\bmu_1)\Big|\,.
\end{align}
By the distribution of $l_i$ we know that \eqref{gts10} is independent of $i$.  Now we focus on $\E \Big|\text{sinh}(\bbx_i^\top\bSig^{-1}\bmu_1)\Big|$. Since the expectation is under the distribution that $\bbx_i=\bbw_i$,  $\bbx_i^\top\bSig^{-1}\bmu_1\sim N(0,\|\bSig^{-1/2}\bmu_1\|_2^2)$. For simplicity, let $z=\bbx_i^\top\bSig^{-1}\bmu_1/\|\bSig^{-1/2}\bmu_1\|_2$ and $\sigma=\|\bSig^{-1/2}\bmu_1\|_2$. Then
\begin{equation}\label{gts11}
  2\E \Big|\text{sinh}(\bbx_i^\top\bSig^{-1}\bmu_1)\Big|=2\E \Big|\text{sinh}(\sigma z)\Big|=2\int_{z\ge 0}\frac{e^{\sigma z}-e^{-\sigma z}}{\sqrt{2\pi}}e^{-z^2/2}dz
  \end{equation}
  \begin{equation}\label{gts12}
  \int_{z\ge 0}\frac{e^{\sigma z}}{\sqrt{2\pi}}e^{-z^2/2}dz=\frac{e^{\sigma^2/2}}{\sqrt{2\pi}}\int_{z\ge 0}e^{-(z-\sigma)^2/2}dz=e^{\sigma^2/2}\p(z\ge -\sigma)\,.
    \end{equation}
      \begin{equation}\label{gts13}
  \int_{z\ge 0}\frac{e^{-\sigma z}}{\sqrt{2\pi}}e^{-z^2/2}dz=\frac{e^{\sigma^2/2}}{\sqrt{2\pi}}\int_{z\ge 0}e^{-(z+\sigma)^2/2}dz=e^{\sigma^2/2}\p(z\ge \sigma)\,.
    \end{equation}
    By \eqref{gts12} and \eqref{gts13}, we imply that
  \begin{equation}\label{gts14}
  \E \Big|\text{sinh}(\bbx_i^\top\bSig^{-1}\bmu_1)\Big|=e^{\sigma^2/2}(\p(z\ge -\sigma)-\p(z\ge \sigma))=e^{\sigma^2/2}(\p(-\sigma\le z<\sigma)\,.
  \end{equation}
  By \eqref{gts10}, \eqref{gts14} and the condition that $ \|\bSig^{-1/2}\bmu_1\|_2\rightarrow 0$,
  we finish our proof.
  we finish our proof.
\end{proof}
\subsection{Exact recovery}
In this section, we consider a special case that $\bmu_1=-\bmu_2$. By Theorem \ref{thm: population-eg}, it is corresponding to the case that $d_2=0$ and $d_1^2=n_1c_{11}+n_2c_{22}=nc_{11}$. We prove that for a little bigger $\|\bmu_1\|$, we have the following theorem and Corollary \ref{cor12} for exact recovery.
\begin{thm}\label{exactrec}
Assume that $\Sigma=\bbI$, $\bmu_1=-\bmu_2$, $\|\bmu_1\|_{\infty}=O( \frac{1}{n^{1/4}})$,  $n=O(n_1)=O(n_2)$ and $p\sim n$, if there exists a positive constant $\ep$ such that $c_{11}\ge 2(1+\ep)\log n$, then there exists $s\in\{\pm 1\}$ such that with probability tending to 1, we have
\begin{equation}\label{gta1}
  \sqrt n\min_{1\le i\le n}\{sl_i\widehat\bbu_1(i)\}\ge  1-\frac{1}{\sqrt{1+\ep}}-\frac{C}{\sqrt{\log n}}\,,
\end{equation}
for some positive constant $\bbC$.
\end{thm}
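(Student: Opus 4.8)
The plan is to pass to the rank-one signal regime and then obtain a sup-norm (entrywise) expansion of the leading empirical right singular vector $\widehat\bbu_1$ of $\bbX$ (normalized to unit length, as needed for $\sqrt n\,\widehat\bbu_1(i)$ to estimate $l_i$). Because $\bmu_2=-\bmu_1$ we have $c_{22}=c_{11}$ and $c_{12}=-c_{11}$, hence $c_{12}^2=c_{11}c_{22}$ and Theorem \ref{thm: population-eg}(a) applies: $d_2=0$ and $d_1^2=nc_{11}$. Moreover $\E\bbX=\bmu_1\bba_1^\top+\bmu_2\bba_2^\top=\bmu_1\mathbf{l}^\top$ with $\mathbf{l}:=\bba_1-\bba_2=(l_1,\dots,l_n)^\top\in\{\pm1\}^n$, so the population right/left singular vectors are $\bbu_1=\mathbf{l}/\sqrt n$ and $\bbp_1:=\bmu_1/\sqrt{c_{11}}$, and $d_1=\sqrt{nc_{11}}$; with $\bSig=\bbI$ and $p\sim n$ one has $\sigma_n\asymp\sqrt n$, and $c_{11}\ge 2(1+\ep)\log n$ forces $d_1\gg\sigma_n$. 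Writing $\bbX=\bmu_1\mathbf{l}^\top+\bbE$ (so $\bbE$ has i.i.d.\ $N(0,1)$ entries) and $\widehat\bbp_1:=\widehat t_1^{-1}\bbX\widehat\bbu_1$ (the unit leading left singular vector of $\bbX$), the identities $\widehat t_1\widehat\bbu_1=\bbX^\top\widehat\bbp_1$ and $l_i^2=1$ yield the exact decomposition
\begin{equation}\label{prop-entry}
\sqrt n\,l_i\,\widehat\bbu_1(i)=\underbrace{\frac{\sqrt n\,\bmu_1^\top\widehat\bbp_1}{\widehat t_1}}_{\mathrm{(I)}}+\underbrace{\frac{\sqrt n\,l_i\,\bbe_i^\top\bbp_1}{\widehat t_1}}_{\mathrm{(II)}}+\underbrace{\frac{\sqrt n\,l_i\,\bbe_i^\top(\widehat\bbp_1-\bbp_1)}{\widehat t_1}}_{\mathrm{(III)}},\qquad i=1,\dots,n,
\end{equation}
where $\bbe_i$ is the $i$-th column of $\bbE$. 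Term $\mathrm{(I)}$ is the same for all $i$ and will contribute the leading $1$; term $\mathrm{(II)}$ is a maximum of exactly i.i.d.\ standard normals and produces the $-1/\sqrt{1+\ep}$ worst-case term; term $\mathrm{(III)}$ must be shown to be a uniform $O_P(1/\sqrt{\log n})$.

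For $\mathrm{(I)}$ I would combine Lemma \ref{bod} ($\|\bbW\|=O_P(\sqrt n)$, hence $\widehat t_1\gg\|\bbW\|$) with Weyl's inequality $|\widehat t_1-d_1|\le\|\bbW\|$ to get $\widehat t_1=d_1(1+O_P(c_{11}^{-1/2}))$, and the Davis--Kahan estimate used in the proof of Theorem \ref{t3} (with gap $d_1^2-d_2^2=d_1^2$) to get a sign $s$ with $1-s\,\bbp_1^\top\widehat\bbp_1=O_P(\|\bbW\|/d_1)=O_P(c_{11}^{-1/2})$; replacing $\widehat\bbp_1$ by $s\widehat\bbp_1$ (WLOG $s=1$) gives $\mathrm{(I)}=\sqrt{nc_{11}}\,(\bbp_1^\top\widehat\bbp_1)/\widehat t_1=1-O_P(c_{11}^{-1/2})\ge 1-C/\sqrt{\log n}$. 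For $\mathrm{(II)}$, the $\bbe_i^\top\bbp_1$ are i.i.d.\ $N(0,1)$, so a union bound gives $\max_i|\bbe_i^\top\bbp_1|\le\sqrt{2\log n}\,(1+O_P(\log\log n/\log n))$; using $\sqrt n/\widehat t_1=c_{11}^{-1/2}(1+O_P(c_{11}^{-1/2}))$ and $\sqrt{2\log n/c_{11}}\le 1/\sqrt{1+\ep}$ (from the hypothesis on $c_{11}$), and $\log\log n/\log n=o(1/\sqrt{\log n})$, this gives $\min_i\mathrm{(II)}\ge-1/\sqrt{1+\ep}-C/\sqrt{\log n}$.

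The main obstacle is term $\mathrm{(III)}$, i.e.\ controlling $\max_i|\bbe_i^\top(\widehat\bbp_1-\bbp_1)|$: a Cauchy--Schwarz bound $\le\|\bbe_i\|\,\|\widehat\bbp_1-\bbp_1\|=O_P(\sqrt n\,c_{11}^{-1/2})$ is too lossy by a polynomial factor (it would require $c_{11}\gtrsim\sqrt{n\log n}$, whereas we only assume $c_{11}\gtrsim\log n$), precisely because $\bbe_i$ and $\widehat\bbp_1$ are dependent. I would break this dependence with a leave-one-out device: let $\widehat\bbp_1^{(-i)}$ be the (sign-aligned) unit leading left singular vector of $\bbX$ with its $i$-th column deleted, and split $\widehat\bbp_1-\bbp_1=(\widehat\bbp_1^{(-i)}-\bbp_1)+(\widehat\bbp_1-\widehat\bbp_1^{(-i)})$. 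Since $\widehat\bbp_1^{(-i)}$ is independent of $\bbe_i$, conditioning gives $|\bbe_i^\top(\widehat\bbp_1^{(-i)}-\bbp_1)|=O_P(\|\widehat\bbp_1^{(-i)}-\bbp_1\|\sqrt{\log n})=O_P(\sqrt{\log n}/\sqrt{c_{11}})$ uniformly in $i$ (Davis--Kahan for $\bbX^{(-i)}$, whose population left singular vector is again $\bbp_1$ with singular value $\sqrt{(n-1)c_{11}}$), which after the factor $\sqrt n/\widehat t_1\asymp c_{11}^{-1/2}$ is $O_P(\sqrt{\log n}/c_{11})=O_P(1/\sqrt{\log n})$. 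For the genuinely correlated piece $\bbe_i^\top(\widehat\bbp_1-\widehat\bbp_1^{(-i)})$ I would use the secular-equation expansion for the rank-one update $\bbX\bbX^\top=\bbX^{(-i)}(\bbX^{(-i)})^\top+\bbx_i\bbx_i^\top$, writing $\widehat\bbp_1-\widehat\bbp_1^{(-i)}=\alpha_i\big(\bbI-\widehat\bbp_1^{(-i)}(\widehat\bbp_1^{(-i)})^\top\big)\bbx_i+(\text{lower order})$ with $|\alpha_i|=O_P(|\bbx_i^\top\widehat\bbp_1^{(-i)}|/\widehat t_1^2)=O_P(1/(n\sqrt{c_{11}}))$, and noting that $\bbe_i^\top\big(\bbI-\widehat\bbp_1^{(-i)}(\widehat\bbp_1^{(-i)})^\top\big)\bbx_i$ is dominated uniformly by $\|\bbe_i\|^2\asymp p\asymp n$; this gives $\max_i|\bbe_i^\top(\widehat\bbp_1-\widehat\bbp_1^{(-i)})|=O_P(c_{11}^{-1/2})$, so its contribution to $\mathrm{(III)}$ is $O_P(\sqrt n/(\sqrt{c_{11}}\,d_1))=O_P(1/c_{11})=O_P(1/\log n)$. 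Making the ``lower order'' terms of this expansion rigorous and uniform over $i\le n$ is the delicate part; here the hypotheses $\|\bmu_1\|_\infty=O(n^{-1/4})$ (which delocalizes $\bbp_1$, hence the leave-one-out singular vectors) and $n=O(n_1)=O(n_2)$ serve to keep all residuals and the relative error of $\widehat t_1$ at the advertised $O(1/\sqrt{\log n})$ scale. Combining the bounds on $\mathrm{(I)}$, $\mathrm{(II)}$, $\mathrm{(III)}$ in \eqref{prop-entry} yields $\sqrt n\min_{i\le n}\{l_i\widehat\bbu_1(i)\}\ge 1-1/\sqrt{1+\ep}-C/\sqrt{\log n}$ with probability tending to one, which is \eqref{gta1} with this $s$.
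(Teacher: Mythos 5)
Your decomposition \eqref{prop-entry} is the right one and matches the structure of the paper's argument, but the route you take to bound the residual is genuinely different (and substantially longer). The paper does not work directly with the SVD of $\bbX$: it passes to the symmetric linearization $\mathcal{Z}$, verifies conditions \textbf{A1}--\textbf{A4} of \cite{abbe2017entrywise}, and then invokes their Theorem~1.1 as a black box to get the entrywise first-order approximation
\[
\min_{s\in\{\pm1\}}\Big\|s\widehat\bbv_1-\frac{\mathcal{Z}\bbv_1}{d_1}\Big\|_\infty
\le C\,\gamma\,\|\bbv_1\|_\infty,
\qquad \gamma=\max\Big\{\tfrac{\|\bmu_1\|_\infty}{\sqrt{\log n}},\tfrac{1}{\sqrt n}\Big\},
\]
after which the entrywise Gaussian fluctuation of $(\mathcal{Z}-\E\mathcal{Z})\bbv_1$ produces the $-1/\sqrt{1+\ep}$ term exactly as your term $\mathrm{(II)}$ does. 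Restricted to the first $n$ coordinates, the paper's linearization $\mathcal{Z}\bbv_1/d_1=\bbv_1+(\mathcal{Z}-\E\mathcal{Z})\bbv_1/d_1$ is exactly your $\bbX^\top\bbp_1/d_1=\mathbf{l}/\sqrt n+\bbE^\top\bbp_1/\sqrt{nc_{11}}$, so the leading terms of the two arguments agree; the difference is entirely in the treatment of the residual, your term $\mathrm{(III)}$. The paper gets it for free from \cite{abbe2017entrywise}, while you attempt to re-derive it by the leave-one-out device (which is, in fact, the mechanism inside the cited theorem). This buys transparency and avoids the linearization detour, at the price of having to carry out the full perturbation argument yourself.

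That price is precisely where your proposal is incomplete. The sub-term $\bbe_i^\top(\widehat\bbp_1^{(-i)}-\bbp_1)$ is handled correctly (independence plus Davis--Kahan), but for $\bbe_i^\top(\widehat\bbp_1-\widehat\bbp_1^{(-i)})$ you rely on a first-order rank-one eigenvector update with ``lower order'' terms declared but not controlled. Concretely: the normalization of $\widehat\bbp_1$, the higher-order resolvent corrections to $(\lambda_0\bbI-\bbX^{(-i)}(\bbX^{(-i)})^\top)^+$, and the uniformity of the bound over $i\le n$ all need to be established at the $O_P(1/\sqrt{\log n})$ scale, and these are exactly the hard estimates that \cite{abbe2017entrywise}'s Theorem~1.1 encapsulates. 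Until those residuals are proved (with the role of $\|\bmu_1\|_\infty=O(n^{-1/4})$ made explicit in delocalizing $\bbp_1$ and $\widehat\bbp_1^{(-i)}$, as the paper uses it to verify \textbf{A1}), the argument has a genuine gap at its most technical point. So: same decomposition and same constants, different machinery for the residual; the student version is a legitimate alternative strategy but is not yet a proof, whereas the paper closes the gap by citation.
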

\begin{proof}
We prove this theorem by considering the linearization matrix $\mathcal{Z}$ and  $\widehat \bbv_1$. The idea of the proof follows from the proof of Theorem 3.1 of \cite{abbe2017entrywise}. Concretely, we prove that \textbf{A1}--\textbf{A4} of \cite{abbe2017entrywise} hold and apply Theorem 1.1 of \cite{abbe2017entrywise} to show our result. Substituting $d_1^2=nc_{11}$ and $c_{11}=c_{22}=-c_{12}$ into \eqref{0122.5} and \eqref{0122.6}, without loss of generality, assume $\bbu_1$ has two different values $v_1$ and $v_2$ such that
$$v_1=-v_2=\frac{1}{\sqrt n}\,,$$
where $v_1$ is corresponding to $Y_i=1$ and $v_2$ is corresponding to $Y_i=0$. Then we have
\begin{equation}\label{gta2}
l_i\bbu_1(i)=\frac{1}{\sqrt n}, \ i=1,\ldots,n\,.
\end{equation}
By Lemma \ref{bod}, for any positive constant $c>1$, $D$  and sufficiently large $n$ we have
$$\p\left(\|\bbW\|\ge c( \sqrt n+\sqrt p)\right)\le n^{-D}\,.$$
Setting $\gamma=\max\{\frac{\|\bmu_1\|_{\infty}}{\sqrt{\log n}},\frac{1}{\sqrt n}\}\rightarrow 0$, we have 
\begin{equation}\label{gta3}
\max\{\sqrt{c_{11}},\|\bmu_1\|_{\infty}\sqrt n\}\le \gamma d_1\,.
\end{equation}
Notice that  $\mathcal{Z}$ and $\E\mathcal{Z}$ are corresponding to $\bbA$ and $\bbA^*$ of \cite{abbe2017entrywise}. Let $\Delta^*=d_1$, by \eqref{gta3}, \textbf{A1} of \cite{abbe2017entrywise} holds. Moreover, \textbf{A2} follows from the assumption that $\Sigma=\bbI$.
 By Lemma \ref{bod}, it is easy to see that \textbf{A3} of \cite{abbe2017entrywise} holds  by \eqref{gta2}.
Similar to the proof of Theorem 3.1 in \cite{abbe2017entrywise}, \textbf{A4} holds by setting $\phi(x)=x$.  By Theorem 1.1 of \cite{abbe2017entrywise}, with probability tending to 1, there exists a positive constant $C$ such that
\begin{equation}\label{gta4}
\min_{s\in \{\pm 1\}}\|s\widehat\bbv_1-\frac{\mathcal{Z}\bbv_1}{d_1}\|_{\infty}=\min_{s\in \{\pm 1\}}\|s\widehat\bbv_1-\bbv_1-\frac{(\mathcal{Z}-\E\mathcal{Z})\bbv_1}{d_1}\|_{\infty}\le C \gamma\|\bbv_1\|_{\infty}\,,
\end{equation}
where $\bbv_1$ is the eigenvector of $\E\mathcal{Z}$ corresponding to $d_1$.
By Lemma \ref{0612-1}, we have $\bbv_1=\frac{1}{\sqrt 2}(\bbu_1^\top,\frac{\bmu_1^\top}{c_{11}})^\top$. Therefore by the conditions that $\|\bmu_1\|_{\infty}=O( \frac{1}{n^{1/4}})$ and $n=O(n_1)=O(n_2)=O(p)$, we have
\begin{equation}\label{gta5}
\gamma\|\bbv_1\|_{\infty}=O(\frac{1}{\sqrt{n\log n}})\,.\end{equation}
Notice that each entry of $\sqrt 2(\mathcal{Z}-\E\mathcal{Z})\bbv_1$ follows  a standard gaussian distribution. This implies that
\begin{equation}\label{gta6}
\p(\max_{1\le i\le n}|\bbe_i^\top(\mathcal{Z}-\E\mathcal{Z})\bbv_1|\ge \sqrt{\log n})=O(\frac{1}{\sqrt{\log n}})\,.
\end{equation}
By  \eqref{gta4}--\eqref{gta6}, with probability tending to $1$, there exists $s\in \{\pm 1\}$ and some positive constant $C$ such that
\begin{equation}\label{gta8a}
  \sqrt n\max_{1\le i\le n}\{\|s\widehat\bbv_1(i)-\bbv_1(i)\|_{\infty}\}\le \frac{C}{\sqrt{2n\log n}}+\frac{\sqrt{\log n}}{\sqrt{2(1+\ep)n\log n}}\,.
\end{equation}
Notice that $\bbv_1=\frac{1}{\sqrt 2}(\bbu_1^\top,\frac{\bmu_1^\top}{c_{11}})^\top$ and the first $n$ entries of  $\widehat\bbv_1$ is $\frac{1}{\sqrt 2}\widehat\bbu_1$, by \eqref{gta2} and \eqref{gta8a}, we have

\begin{equation}\label{gta7}
  \sqrt n\min_{1\le i\le n}\{sl_i\widehat\bbu_1(i)\}\ge 1-\frac{1}{\sqrt{1+\ep}}-\frac{C}{\sqrt{\log n}}\,.
\end{equation}
\end{proof}

By Theorem \ref{exactrec}, we have the following corollary to ensure the existence of exact recovery for the model.

\begin{coro}\label{cor12}
Under the conditions of Theorem \ref{exactrec}, there exists one clustering approach such that
\begin{equation}\label{gta8}\p(\widehat Y_i=Y_i, i=1\ldots,n)=1-o(1)\,.
\end{equation}
\end{coro}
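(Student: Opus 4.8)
The plan is to take the sign pattern of the leading sample singular vector as the clustering rule, and to argue that Theorem \ref{exactrec} already pins that sign pattern to the true labels with probability $1-o(1)$.

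First I would specify the clustering approach: compute $\widehat\bbv_1$, the top eigenvector of $\mathcal{Z}$, extract $\widehat\bbu_1$ from its first $n$ coordinates as in Algorithm \ref{alg1}, and output the two-set partition $\{\,i:\widehat\bbu_1(i)>0\,\}$ versus $\{\,i:\widehat\bbu_1(i)\le 0\,\}$; equivalently, set $\widehat Y_i=\mathbf 1\{\widehat\bbu_1(i)>0\}$ after fixing an orientation (e.g.\ by declaring the block containing index $1$ to be class $1$). The orientation choice is nothing more than the global label-permutation ambiguity inherent in any clustering method.

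Next I would invoke Theorem \ref{exactrec}: under its hypotheses there is $s\in\{\pm1\}$ such that, with probability tending to $1$,
\[
\sqrt{n}\,\min_{1\le i\le n}\{s\,l_i\,\widehat\bbu_1(i)\}\ \ge\ 1-\frac{1}{\sqrt{1+\ep}}-\frac{C}{\sqrt{\log n}},
\]
where $l_i=2Y_i-1\in\{-1,1\}$. Since $\ep>0$ is a fixed constant, $1-1/\sqrt{1+\ep}$ is a strictly positive constant while $C/\sqrt{\log n}\to 0$, so for all $n$ past some $n_0$ the right-hand side exceeds $\tfrac12\bigl(1-1/\sqrt{1+\ep}\bigr)>0$. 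On that event every term $s\,l_i\,\widehat\bbu_1(i)$ is strictly positive, hence $\sgn(\widehat\bbu_1(i))=s\,l_i$ for all $i$ simultaneously; because $l_i\leftrightarrow Y_i$ is a bijection, the partition induced by the signs of $\widehat\bbu_1$ coincides exactly with the true partition $\{i:Y_i=1\},\{i:Y_i=0\}$. After matching orientations this gives $\widehat Y_i=Y_i$ for all $i$ on this event, so $\p(\widehat Y_i=Y_i,\ i=1,\dots,n)\ge 1-o(1)$, which is \eqref{gta8}.

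The only delicate point will be the bookkeeping of the global flip $s$: the statement ``$\widehat Y_i=Y_i$'' must be read up to swapping the two cluster labels, exactly as in the exact-recovery results of \cite{abbe2017entrywise}. Once that convention is adopted, no further probabilistic estimate is required — the corollary is essentially a direct restatement of the uniform sign-consistency already established in Theorem \ref{exactrec}.
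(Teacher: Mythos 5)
Your proposal is correct and follows essentially the same route as the paper's proof: declare $\widehat Y_i$ by the sign of the $i$th entry of $\widehat\bbu_1$ (equivalently, the first $n$ entries of $\widehat\bbv_1$), then note that Theorem \ref{exactrec} gives a strictly positive uniform lower bound on $\sqrt n\,\min_i\{s\,l_i\,\widehat\bbu_1(i)\}$ for some $s\in\{\pm1\}$, hence $\sgn(\widehat\bbu_1(i))=s\,l_i$ for all $i$ on a high-probability event, which is exact recovery of the partition. The only cosmetic difference is in how the global flip $s$ is disposed of: you treat it as the usual label-permutation ambiguity, while the paper's proof conditions on the sign of $\sum_i(2\widehat Z_i-1)l_i$ to pick the orientation agreeing with the truth and handles the two conditioning cases symmetrically — a bookkeeping device rather than a substantive difference.
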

\begin{proof}
The following clustering procedure suffices.

1. Calculate the eigenvector of $\mathcal{Z}$ corresponding to the largest eigenvalue, which is $\widehat\bbv_1$ as we defined before.

2. $\widehat Z_i=\frac{sgn(\widehat\bbv_1(i))+1}{2}, i=1,\ldots,n$.

If $\sum_{i=1}^n (2\widehat Z_i-1)l_i>0$, we let $\widehat Y_i=\widehat Z_i$, otherwise  $\widehat Y_i=-(\widehat Z_i-1)$. Without loss of generality, we assume that $\sum_{i=1}^n(2 \widehat  Z_i-1)l_i>0$ and therefore $\widehat Y_i=\widehat Z_i$.
By the definition of $\widehat Z_i$ and the condition that $\sum_{i=1}^n(2 \widehat  Z_i-1)l_i>0$, Theorem \ref{exactrec} holds for $s=1$. Hence
\begin{equation}\label{gtb1}
\p(\widehat Y_i=Y_i, i=1\ldots,n|\sum_{i=1}^n(2 \widehat  Z_i-1)l_i>0)=1-o(1).
\end{equation}
By almost the same arguments, we can prove similarly that
\begin{equation}\label{gtb2}
\p(\widehat Y_i=Y_i, i=1\ldots,n|\sum_{i=1}^n(2 \widehat  Z_i-1)l_i\le 0)=1-o(1).
\end{equation}
Therefore, \eqref{gta8} follows from \eqref{gtb1} and \eqref{gtb2}.
\end{proof}
\newpage

\bibliographystyle{unsrtnat}

\bibliography{references}

\end{document}